\apptocmd{\liminf}{\limits}{}{}
\apptocmd{\limsup}{\limits}{}{}
\newcommand{\argmin}{\mathop{\mathrm{argmin}}\limits}
\newcommand{\inprod}[2]{{\left\langle #1,#2 \right\rangle}}
\newcommand{\trace}{{\rm Tr}}
\newcommand{\St}{{\rm St}}
\newcommand{\TT}{\mathsf{T}}
\newcommand{\diag}{\mathrm{diag}}
\newcommand{\prox}[1]{\mathrm{prox}_{#1}}
\newcommand{\moreau}[2]{{}^{#2}#1}
\newcommand{\exR}{\mathbb{R}\cup \{+\infty\}}
\newcommand{\dom}[1]{\mathrm{dom}(#1)}
\newcommand{\epi}[1]{\mathrm{epi}(#1)}
\newcommand{\Limsup}{\mathop{\mathrm{Lim~sup}}}
\newcommand{\norm}[1]{\left\lVert #1 \right\rVert}
\newcommand{\Id}{\mathrm{Id}}
\newcommand{\Fsubdiff}{\partial_{\rm F}}
\newcommand{\Lsubdiff}{\partial_{\rm L}}
\newcommand{\subdiff}{\partial}
\newcommand{\dist}{d}
\csv@pretable\begin{tabular}{|*{\csv@columncount}{c|}}\csv@tablehead,
\csv@tablefoot\end{tabular}\csv@posttable,
\csv@pretable\begin{tabular}{*{\csv@columncount}{c}}\csv@tablehead,
\csv@tablefoot\end{tabular}\csv@posttable,
\newcommand{\doublewidetilde}[1]{{%
  \mathpalette\double@widetilde{#1}%
}}
\newcommand{\double@widetilde}[2]{%
  \sbox\z@{$\m@th#1\widetilde{#2}$}%
  \ht\z@=.9\ht\z@
  \widetilde{\box\z@}%
}
\setlist[trivlist]{topsep=0.5em plus 2pt minus 2pt} %
\theoremstyle{plain}%
\newtheorem{theorem}{Theorem}[section]
\newtheorem{lemma}[theorem]{Lemma}
\newtheorem{corollary}[theorem]{Corollary}
\newtheorem{proposition}[theorem]{Proposition}
\theoremstyle{definition}
\newtheorem{definition}[theorem]{Definition}
\newtheorem{fact}[theorem]{Fact}
\newtheorem{example}[theorem]{Example}
\newtheorem{assumption}[theorem]{Assumption}
\newtheorem{problem}[theorem]{Problem}
\newtheorem{remark}[theorem]{Remark}
\theoremstyle{remark}
\DeclarePairedDelimiter{\abs}{\lvert}{\rvert}
\newcommand{\cost}{F}
\newcommand{\param}{\bm{\varphi}}
\begin{document}

\title{\Large A Variable Smoothing for Weakly Convex Composite Minimization with Manifold Constraint via Parametrization}

\author*[1]{\fnm{Keita} \sur{Kume}}\email{kume@sp.ict.e.titech.ac.jp}

\author[1]{\fnm{Isao} \sur{Yamada}}\email{isao@sp.ict.e.titech.ac.jp}

\affil*[1]{\centering\orgdiv{Dept. of Information and Communications Engineering},\\ \orgname{Institute of Science Tokyo}, \orgaddress{\street{2-12-1}, \city{Ookayama}, \postcode{152-8550}, \state{Tokyo}, \country{Japan}}}

\abstract{
In this paper, we address a manifold constrained nonsmooth optimization problem involving the composition of a weakly convex function and a smooth mapping under the availability of a parametrization of the manifold.
To find a stationary point of the target problem, we propose a variable smoothing-type algorithm by combining the ideas of (i) translating the constrained problem into a Euclidean optimization problem with a parametrization of the constraint set; (ii) exploiting a sequence of smoothed surrogate functions, of the cost function, given with the Moreau envelope of a weakly convex function.
The proposed algorithm produces a vector sequence by the gradient descent update of a smoothed surrogate function at each iteration.
In a case where the proximity operator of the weakly convex function is available, the proposed algorithm does not require any iterative solver for subproblems therein.
By leveraging tools in the variational analysis, we show
the so-called {\em gradient consistency property}, which is a key ingredient for smoothing-type algorithms, of the smoothed surrogate function used in this paper.
Based on the gradient consistency property, we also establish an asymptotic convergence analysis for the proposed algorithm regarding a stationary point.
Numerical experiments demonstrate the efficacy of the proposed algorithm.
}
\keywords{manifold constrained nonsmooth optimization, weakly convex composite, variable smoothing, gradient consistency, Moreau envelope, parametrization}
\pacs[MSC Classification]{49J52,  49K99}

\maketitle

\mathtoolsset{showonlyrefs=true}
\noeqref{eq:gradient_liminf,eq:regular_subdifferential,eq:general_subdifferential,eq:general_normal}

\section{Introduction} \label{sec:introduction}
In this paper, we consider the following nonsmooth optimization problem with a manifold constraint.
\begin{problem}\label{problem:constrained}
Let
$\mathcal{X}$
and
$\mathcal{Z}$
be Euclidean spaces, i.e.,
finite-dimensional real Hilbert spaces, and
$(\emptyset \neq)C \subset \mathcal{X}$
a connected closed embedded submanifold of
$\mathcal{X}$ (see Example~\ref{ex:normal}).
Then,
\begin{equation}
  \mathrm{find} \ \bm{x}^{\star} \in \argmin_{\bm{x}\in C} (\underbrace{h+g\circ \mathfrak{S}}_{\eqqcolon \cost})(\bm{x}) =
  \argmin_{\bm{x}\in \mathcal{X}} (h + g \circ \mathfrak{S}+\iota_{C})(\bm{x}),
\end{equation}
where
$\iota_{C}$
is the indicator function of
$C$
defined by
\begin{equation}
  \iota_{C}:\mathcal{X}\to \exR:\bm{x}\mapsto
  \left\{\begin{array}{cl} 0,  & \mathrm{if}\ \bm{x}\in C;    \\
             +\infty, & \mathrm{if}\ \bm{x}\notin C,
  \end{array}\right.
\end{equation}
and
$h$,
$\mathfrak{S}$,
and
$g$
satisfy the following:
\begin{enumerate}[label=(\roman*)]
  \item \label{enum:L_smooth}
        $h:\mathcal{X} \to \mathbb{R}$
        is continuously differentiable and its gradient
        $\nabla h:\mathcal{X}\to\mathcal{X}$
        is Lipschitz continuous with a Lipschitz constant
        $L_{\nabla h} > 0$
        over
        $C$,
        i.e.,
        \begin{equation}
          (\bm{x}_{1},\bm{x}_{2}\in C ) \quad \norm{\nabla h(\bm{x}_{1}) - \nabla h(\bm{x}_{2})} \leq L_{\nabla h}\norm{\bm{x}_{1}-\bm{x}_{2}};
        \end{equation}
  \item
        $\mathfrak{S}: \mathcal{X} \to \mathcal{Z}$
        is a continuously differentiable (possibly nonlinear) mapping;
  \item \label{enum:problem:constrained:g}
        $g:\mathcal{Z} \to \mathbb{R}$
        is a (possibly nonsmooth) Lipschitz continuous with a Lipschitz constant
        $L_{g} > 0$,
        i.e.,
        \begin{equation}
          (\bm{z}_{1},\bm{z}_{2}\in \mathcal{Z}) \quad \abs{g(\bm{z}_{1}) - g(\bm{z}_{2})} \leq L_{g}\norm{\bm{z}_{1} - \bm{z}_{2}},
        \end{equation}
        and
        $\eta$-weakly convex function with
        $\eta > 0$,
        i.e.,
        $g+\frac{\eta}{2}\norm{\cdot}^{2}$
        is convex over
        $\mathcal{Z}$.
        Moreover,
        $g$
        is {\em prox-friendly}, i.e., the closed-form expression of the proximity operator (see~\eqref{eq:prox}) of
        $g$
        is available as a computable tool;
  \item \label{enum:cost_bounded}
        $\cost$
        is bounded below over
        $C$,
        i.e.,
        $\inf\{\cost(\bm{x})\mid \bm{x} \in C \} > -\infty$.
\end{enumerate}
\end{problem}

Problem~\ref{problem:constrained} has served as key models, e.g., for finding sparse solutions, in wide range of signal processing and machine learning.
Such applications include, e.g., sparse Principal Component Analysis (PCA)~\cite{Benidis-Sun-Babu-Palomar16}, sparse variable PCA~\cite{Ulfarsson-Solo08}, robust sparse PCA~\cite{Breloy-Kumar-Sun-Palomar21}, orthogonal dictionary learning/robust subspace recovering~\cite{Chen-Deng-Ma-So21}, and sparse spectral clustering~\cite{Lu-Yan-Lin16}.

One of simple cases of Problem~\ref{problem:constrained} is a convex case where
$h$
and
$g$
are convex,
$\mathfrak{S}$
is linear,
and
$C \coloneqq \mathcal{X}$.
For the convex case, the so-called {\em proximal splitting algorithms} have been extensively applied as computationally reliable algorithms (see, e.g.,~\cite{Bauschke-Combettes17,Condat-Kitahara-Contreras-Hirabayashi23}).
The algorithms consist of simple steps involving the gradient of
$h$
and
the so-called {\em proximity operator}
$\prox{\mu g}:\mathcal{Z}\to \mathcal{Z}:\widebar{\bm{z}} \mapsto  \argmin_{\bm{z} \in \mathcal{Z}} \left(g(\bm{z}) + \frac{1}{2\mu}\norm{\bm{z}-\widebar{\bm{z}}}^{2}\right)$
of
$g$
with index
$\mu > 0$.
Although the computation of
$\prox{\mu g}$
requires, in general, solving a convex optimization problem, the proximity operators for many commonly-used functions, e.g.,
$\ell_{1}$-norm and nuclear norm,
have closed-form expressions (see, e.g.,~\cite{Bauschke-Combettes17,Chierchia-Chouzenoux-Combettes-Pesquet}).

In Problem~\ref{problem:constrained}, restricting
$g$
to be convex is not always sufficient~\cite{Chen-Gu14,Selesnick17,Yang-Chen-Ma-Chen-Gu-So19,Abe-Yamagishi-Yamada20,Yata-Yamagishi-Yamada22,Kuroda24} in order to enhance estimation performance especially for sparsity-aware applications.
For example,
$\ell_{1}$-norm has been widely used as a best convex approximation of the naive sparsity promoting function, i.e.,
$\ell_{0}$-pseudonorm, which counts the number of nonzero entries of a vector.
However, the use of
$\ell_{1}$-norm
as
$g$
leads to some underestimation of the desired sparse target because
$\ell_{1}$-norm is a coercive function (see, e.g.,~\cite{Zhang10}), i.e.,
$\ell_{1}$-norm excessively penalizes large magnitude of target.
To overcome the underestimation effect caused mainly by the sparse promoting convex function, the utilization of  weakly convex functions has been attracting great attention~\cite{Chen-Gu14,Yang-Chen-Ma-Chen-Gu-So19}.
Recent advances in proximal splitting techniques~\cite{Selesnick17,Abe-Yamagishi-Yamada20,Yata-Yamagishi-Yamada22,Kuroda24} encourage us to find a global minimizer of Problem~\ref{problem:constrained} under the convexity of
$h+g\circ \mathfrak{S}$
with a linear operator and even with a weakly convex
$g$.

\begin{table*}
  \caption{Summary of existing algorithms for Problem~\ref{problem:constrained} and the proposed algorithm, where ``cvx'' denotes convex function, ``w-cvx'' denotes weakly convex function, ''lin'' denotes linear operator,
    ``$\bf C^{1}$'' denotes continuously differentiable mapping,
    $\Id$
    denotes the identity operator,
    and ``man'' denotes manifold. ``Complexity'' stands for the first-order oracle complexity for finding an $\epsilon$-approximate stationary point, where $\mathcal{O}$ stands for Landau's big O-notation.}\label{table:nonsmooth}
  \centering
  \begin{threeparttable}
    {
      \fontsize{7}{9}\selectfont
      \begin{tabular}[c]{ccc|ccc|cc}
        \toprule
                                                       &
                                                       &
                                                       &
        \multicolumn{3}{c|}{Assumption}                &
        \multicolumn{2}{c}{Convergence result}
        \\
        \cmidrule{4-6}
        \cmidrule{7-8}
        \multicolumn{1}{c}{Category}                   &
        \multicolumn{1}{c}{Method}                     &
        \multicolumn{1}{c|}{Ref.}                      &
        \multicolumn{1}{c}{$g$}                        &
        \multicolumn{1}{c}{$\mathfrak{S}$}             &
        \multicolumn{1}{c|}{$C (\subset \mathcal{X})$} &
        \multicolumn{1}{c}{Asymptotic guarantee ?}      &
        \multicolumn{1}{c}{Complexity}
        \\
        \midrule
        \multirow{7}{*}{Double-loop}
                                                       & \multirow{1}{*}{Proximal gradient}    &
        \cite{Chen-Ma-Man-Anthony-Zhang20}             & cvx                                   & lin                                & \textbf{man}\tnote{\#}          & No                            & $\mathcal{O}(\epsilon^{-2})$\tnote{$\flat$}                                                                                \\
        \cmidrule{2-8}
                                                       & \multirow{3}{*}{Proximal linear}      &
        \cite{Drusvyatskiy-Paquette19}                 & cvx                                   & $\bf{C^{1}}$                       & $\mathcal{X}$                   & No                            & $\mathcal{O}(\epsilon^{-3}\log(\epsilon^{-1}))$                                                                            \\
                                                       &                                       & \cite{Lewis-Wright16}              & \textbf{w-cvx}                  & $\bf{C^{1}}$                  & $\mathcal{X}$                                   & \bf Yes                  & -                                             \\
                                                       &                                       & \cite{Wang-Liu-Chen-Ma-Xue-Zhao22} & cvx                             & $\bf{C^{1}}$                  & \textbf{man}\tnote{\#}                          & \textbf{Yes}             & $\mathcal{O}(\epsilon^{-2})$\tnote{$\flat$}   \\
        \cmidrule{2-8}
                                                       & \multirow{2}{*}{Augmented Lagrangian}
                                                       & \cite{Zhou-Bao-Ding-Zhu23}            & cvx                                & $\bf{C^{1}}$                    & \bf man\tnote{$\dag$}         & \bf Yes                                         & -                                                                        \\
                                                       &                                       & \cite{Xu-Jian-Liu-So25}            & cvx                             & $\bf{C^{1}}$                  & \bf man                                         & No                       & $\mathcal{O}(\epsilon^{-3})$                  \\
        \midrule
        \multirow{7}{*}{Single-loop}
                                                       & \multirow{1}{*}{Subgradient}          &
        \cite{Li-Chen-Deng-Qu-Zhu-Man21}               & \textbf{w-cvx}                        & $\Id$                              & \textbf{man}\tnote{\#}                    & No                            & $\mathcal{O}(\epsilon^{-4})$                                                                                               \\
        \cmidrule{2-8}
                                                       & \multirow{6}{*}{Variable smoothing}   &
        \cite{Bot-Hendrich15}                          & cvx                                   & lin                                & $\mathcal{X}$                   & \bf Yes                       & $\mathcal{O}(\epsilon^{-1})$                                                                                               \\
                                                       &                                       & \cite{Bohm-Wright21}               & \textbf{w-cvx}                  & lin                           & $\mathcal{X}$                                   & No                       & $\mathcal{O}(\epsilon^{-3})$                  \\
                                                       &                                       & \cite{Beck-Rosset23}               & cvx                             & lin                           & \textbf{man}\tnote{\#}                          & \bf Yes                  & $\mathcal{O}(\epsilon^{-3})$                  \\
                                                       &                                       & \cite{Peng-Wu-Hu-Deng23}           & \textbf{w-cvx}                  & lin                           & \textbf{man}\tnote{\#}                          & No                       & $\mathcal{O}(\epsilon^{-3})$                  \\
                                                       &                                       & Alg.~\ref{alg:vsmooth}             & \multirow{2}{*}{\textbf{w-cvx}} & \multirow{2}{*}{$\bf{C^{1}}$} & \multirow{2}{*}{\textbf{man}}                   & \multirow{2}{*}{\bf Yes} & \multirow{2}{*}{$\mathcal{O}(\epsilon^{-3})$} \\
                                                       &                                       & (this paper)                       &                                 &                               &                                                 &                          &                                               \\
        \bottomrule
      \end{tabular}
      \begin{tablenotes}
        \item[\#]
        The compactness of
        $C$
        is assumed in~\cite{Chen-Ma-Man-Anthony-Zhang20,Wang-Liu-Chen-Ma-Xue-Zhao22,Beck-Rosset23,Peng-Wu-Hu-Deng23,Li-Chen-Deng-Qu-Zhu-Man21}.
        \item[$\flat$]
        The oracle complexities in~\cite{Wang-Liu-Chen-Ma-Xue-Zhao22,Chen-Ma-Man-Anthony-Zhang20} are derived in a case where oracle returns the exact solutions of subproblems.
        \item[$\dag$] The augmented Lagrangian method in~\cite{Zhou-Bao-Ding-Zhu23} can also deal with an additional inequality constraint.
      \end{tablenotes}
    }
  \end{threeparttable}
\end{table*}

Beyond the convex cases, nonconvex optimization problems with nonconvex constraint have a great potential for flexible formulation to align with real-world tasks.
Indeed, the minimization of nonconvex
$g\circ \mathfrak{S}$
with a nonlinear smooth mapping
$\mathfrak{S}$
is known to have extremely wide applications including, e.g.,
robust phase retrieval~\cite{Duchi-Ruan18}, nonnegative matrix factorization~\cite{Gillis17}, robust matrix recovery~\cite{Charisopoulos-Chen-Davis-Diaz-Ding-Drusvyatskiy21}, and robust blind deconvolution~\cite{Charisopoulos-Davis-Dias-Drusvyatskiy20}.
In particular, manifold constrained optimization has been continuingly attracting great attention mainly by its remarkable expressive ability of the target conditions expected to be achieved by a solution (see, e.g.,~\cite{Absil-Mahony-Sepulchre08,Sato21,Boumal23,Absil-Hosseini19}).

Recently, unconstrained nonsmooth optimization algorithms, e.g.,~\cite{Bot-Hendrich15,Bohm-Wright21,Bauschke-Combettes17,Lewis-Wright16,Drusvyatskiy-Paquette19}, have been extended to manifold constrained nonsmooth optimization algorithms~\cite{Chen-Ma-Man-Anthony-Zhang20,Wang-Liu-Chen-Ma-Xue-Zhao22,Zhou-Bao-Ding-Zhu23,Xu-Jian-Liu-So25,Li-Chen-Deng-Qu-Zhu-Man21,Beck-Rosset23,Peng-Wu-Hu-Deng23} (see Table~\ref{table:nonsmooth}).
For Problem~\ref{problem:constrained}, these extended algorithms can be categorized as (i) double-loop algorithms that require an iterative solver to solve a certain subproblem at each iteration
(e.g.,
a proximal gradient method~\cite{Chen-Ma-Man-Anthony-Zhang20}, a proximal linear method~\cite{Wang-Liu-Chen-Ma-Xue-Zhao22} and augmented Lagrangian methods~\cite{Zhou-Bao-Ding-Zhu23,Xu-Jian-Liu-So25});
(ii) single-loop algorithms without requiring such an iterative solver at each iteration
(e.g., a subgradient method~\cite{Li-Chen-Deng-Qu-Zhu-Man21} and variable smoothing algorithms~\cite{Beck-Rosset23,Peng-Wu-Hu-Deng23}).
Double-loop algorithms~\cite{Wang-Liu-Chen-Ma-Xue-Zhao22,Zhou-Bao-Ding-Zhu23,Xu-Jian-Liu-So25} can deal with a nonlinear continuously differentiable mapping
$\mathfrak{S}$
but with a convex function
$g$
in Problem~\ref{problem:constrained}.
An asymptotic convergence guarantee to a stationary point has been shown for~\cite{Wang-Liu-Chen-Ma-Xue-Zhao22,Zhou-Bao-Ding-Zhu23}, while
the oracle complexity
i.e., the number of oracle calls,
for finding an $\epsilon$-approximate stationary point has been shown for~\cite{Wang-Liu-Chen-Ma-Xue-Zhao22,Xu-Jian-Liu-So25} (and for~\cite{Chen-Ma-Man-Anthony-Zhang20} under the linear case of
$\mathfrak{S}$).
However, each update of double-loop algorithms heavily relies on iterative solvers for solving subproblems.
Indeed,
convergence analyses of double-loop algorithms
have been established typically with an exact (or approximate) solution to the subproblem, although such an exact solution is not available in general within a finite number of iterations of an iterative solver.
Moreover, an iterative solver for subproblems sometimes takes a lot of time to obtain even an approximate solution to a subproblem (as will be shown in numerical experiments in Section~\ref{sec:SPCA}).

Single-loop algorithms have also been attracting great attention, e.g.,~\cite{Li-Chen-Deng-Qu-Zhu-Man21,Beck-Rosset23,Peng-Wu-Hu-Deng23}, for Problem~\ref{problem:constrained}
because such algorithms do not require solving any subproblem at every iteration.
Under the linearity of
$\mathfrak{S}$,
the oracle complexity for finding an $\epsilon$-approximate stationary point has been shown for~\cite{Li-Chen-Deng-Qu-Zhu-Man21,Beck-Rosset23,Peng-Wu-Hu-Deng23},
where an asymptotic convergence guarantee to a stationary point has been shown for~\cite{Beck-Rosset23} under additional assumptions such as the convexity of
$g$
and the compactness of
$C$.
However, to the best of the authors' knowledge,
for Problem~\ref{problem:constrained} with a nonlinear continuously differentiable mapping
$\mathfrak{S}$
and
a weakly convex function
$g$,
no single-loop algorithm has been established with a convergence guarantee, in particular an asymptotic convergence guarantee.
These situations raise the following research question:

\begin{quote}
  For Problem~\ref{problem:constrained} with a manifold constraint
  $C$,
  can we design a single-loop algorithm that (i) admits a nonlinear continuously differentiable mapping
  $\mathfrak{S}$
  in the weakly convex composite term
  $g\circ\mathfrak{S}$,
  and
  (ii) converges asymptotically to a stationary point of Problem~\ref{problem:constrained} ?
\end{quote}

In this paper, we answer this question affirmatively under the availability of a {\em parametrization} of
$C$ (see Assumption~\ref{assumption:parametrization} below).
This assumption is satisfied by a broad class of manifolds encountered in applications (see Example~\ref{example:parametrization}).
\begin{assumption}[Parametrization] \label{assumption:parametrization}
  The manifold
  $C$
  in Problem~\ref{problem:constrained}
  can be parameterized by a continuously differentiable mapping
  $\param :\mathcal{Y}\to C(\subset \mathcal{X})$
  defined over a Euclidean space
  $\mathcal{Y}$,
  i.e.,
  $\param (\mathcal{Y})\coloneqq  \{\param (\bm{y}) \in \mathcal{X} \mid \bm{y} \in \mathcal{Y}\} = C$.
  We call
  $\param$
  a parametrization of
  $C$
  (see Example~\ref{example:parametrization} below for pairs of
  $C$
  and
  $\param$).
\end{assumption}
\begin{example}[On parametrization; see Section~\ref{sec:experiment} for other examples] \label{example:parametrization}
  \mbox{}
  \begin{enumerate}[label=(\alph*)]
    \item
          Let
          $C$
          be a linear subspace of
          $\mathcal{X}$.
          Consider a linear operator
          $L:\mathcal{Y} \to \mathcal{X}$
          satisfying
          $L(\mathcal{Y})\coloneqq\{L\bm{y}\in \mathcal{X}\mid \bm{y}\in\mathcal{Y}\} = C$.
          Then,
          $\param\coloneqq L$
          is a parametrization of
          $C$.
          In particular,
          $\param\coloneqq \Id$
          is a parametrization of
          $C$
          if
          $C\coloneqq \mathcal{X}$.
    \item
          For
          $\mathcal{X} \coloneqq \mathbb{R}^{2}$,
          let
          $C\coloneqq \{\bm{x}\in \mathbb{R}^{2}\mid \norm{\bm{x}}=1\}$
          be the unit circle.
          Then,
          $\param:\mathbb{R}\to \mathbb{R}^{2}:\theta\mapsto [\cos(\theta)\ \sin(\theta)]^{\TT}$
          is a parametrization of
          $C$.
    \item \label{enum:ex:parameterization:genearal}
          Consider
          $C\subset \mathcal{X}$
          in Problem~\ref{problem:constrained}, i.e.,
          $C$
          is an embedded submanifold of
          $\mathcal{X}$,
          endowed with the {\em Riemannian metric} induced from
          $\mathcal{X}$,
          such that
          $C$
          is connected and closed in
          $\mathcal{X}$.
          Let
          $T_{C}(\bm{x})\subset \mathcal{X}$
          be a {\em tangent (vector) space} to
          $C$
          at
          $\bm{x} \in C$ (see~\eqref{eq:manifold_tangent}).
          In this case, for any
          $\bm{x} \in C$,
          $\mathrm{Exp}_{\bm{x}}:T_{C}(\bm{x})\to C$
          with the {\em exponential map}\footnote{
          By~\cite[Defs. 10.6 and 10.7, Thm. 10.8, Exe. 10.12]{Boumal23}
          and~\cite[Thm. 4.27]{Lee18},
          {\em the exponential map} is uniquely defined over the {\em tangent bundle}
          $TC\coloneqq \{(\bm{x},\bm{v}) \mid \bm{x} \in C, \bm{v}\in T_{C}(\bm{x})\}$
          as
          $\mathrm{Exp}:TC \to C:(\bm{x},\bm{v})\mapsto \mathrm{Exp}_{\bm{x}}(\bm{v})\coloneqq \gamma_{\bm{v}}(1)$
          with the {\em geodesics}
          $\gamma_{\bm{v}}:I (\subset \mathbb{R})\to C$ (see~\cite[Def. 5.38]{Boumal23},~\cite[(4.16)]{Lee18})
          such that
          $\gamma_{\bm{v}}(0) = \bm{x}$,
          $\gamma_{\bm{v}}'(0) = \bm{v}$
          and
          the {\em geodesic equation} $\gamma_{\bm{v}}''(t) = 0$
          is satisfied for all
          $t \in I$,
          where
          $I\subset \mathbb{R}$
          is an interval containing
          $[0,1]$.
          }
          $\mathrm{Exp}$
          is continuously differentiable by~\cite[Prop. 5.19]{Lee18}, and surjective onto
          $C$ by~\cite[Cor. 6.21]{Lee18} together with the completeness (closedness in
          $\mathcal{X}$) of
          $C$.
          Hence,
          $\mathrm{Exp}_{\bm{x}}$
          is a parametrization of
          $C$.
          See~\cite[p.136-p.142]{Lee18} for examples of the exponential map of some manifolds.
  \end{enumerate}
\end{example}

With a parametrization
$\param$
of
$C$,
Problem~\ref{problem:constrained} can be translated into the following unconstrained optimization Problem~\ref{problem:origin}.
\begin{problem}\label{problem:origin}
Consider
$\cost=h+g\circ\mathfrak{S}$
and
$C \subset \mathcal{X}$
in Problem~\ref{problem:constrained} under Assumption~\ref{assumption:parametrization}.
Then, with a parametrization
$\param$
in Assumption~\ref{assumption:parametrization},
\begin{equation}
  \mathrm{find} \ \bm{y}^{\star} \in \argmin_{\bm{y}\in \mathcal{Y}} (h+g\circ \mathfrak{S})\circ \param (\bm{y}) (= \cost \circ \param (\bm{y})).
  \label{eq:weakly-composite}
\end{equation}
\end{problem}

In Section~\ref{sec:necessary}, we discuss relations between Problem~\ref{problem:constrained} and Problem~\ref{problem:origin}.
More precisely, we present sufficient conditions, e.g.,
$N_{ C}(\bm{x}^{\star}) \supset  \mathrm{Ker}((\mathrm{D}\param (\bm{y}^{\star}))^{*})$
(see~\eqref{eq:general_normal} for
$N_{C}$),
or
the submersion condition of
$\param$,
for the equivalence between Problem~\ref{problem:constrained} and Problem~\ref{problem:origin} in view of stationary points (see Theorem~\ref{theorem:necessary} and Corollary~\ref{corollary:optimality_C_F}).

For finding a stationary point of Problem~\ref{problem:origin}, we propose a variable smoothing algorithm as an extension of~\cite{Bohm-Wright21} proposed originally for Problem~\ref{problem:constrained} in a special case where
$C \coloneqq \mathcal{X}$
and
$\mathfrak{S}$
is a linear operator (equivalently,
Problem~\ref{problem:origin} in a special case where
$\param\coloneqq \Id$
and
$\mathfrak{S}$
is a linear operator).
We note that the existing variable smoothing algorithm~\cite{Bohm-Wright21} cannot be applied to Problem~\ref{problem:origin} due to the nonlinearity of
$\mathfrak{S}\circ\param$.
The proposed algorithm performs a gradient descent update of a smoothed surrogate function
$\cost^{\langle \mu_{n}\rangle}\circ \param \coloneqq (h+\moreau{g}{\mu_{n}}\circ \mathfrak{S}) \circ \param\ (\mu_{n}\searrow 0)$,
of
$\cost \circ \param$,
designed partially with the {\em Moreau envelope}
$\moreau{g}{\mu}$
of
$g$
(see~\eqref{eq:Moreau}),
where
$\cost^{\langle \mu\rangle} \circ \param$
is continuously differentiable with
$\lim_{\mu\searrow 0}\cost^{\langle \mu \rangle} \circ \param(\bm{y})=\cost\circ\param(\bm{y})\ (\bm{y} \in \mathcal{Y})$ (see also Lemma~\ref{lemma:basic}).
In a case where
$\prox{\mu g}$
is computable, the proposed algorithm is a single-loop algorithm.
To establish an asymptotic convergence analysis of the proposed algorithm, we show the so-called {\em gradient consistency property} of
$\cost^{\langle \mu\rangle} \circ \param$
for
$\cost \circ \param$
in Theorem~\ref{theorem:stationary}, which makes it possible to express the stationarity condition of Problem~\ref{problem:origin} in terms of a gradient sequence
$\cost^{\langle \mu\rangle} \circ \param$
with
$\mu \searrow 0$.
By exploiting the gradient consistency property, we present an asymptotic convergence analysis
of the proposed algorithm (see Theorem~\ref{theorem:convergence_extension}).
Finally, numerical experiments, in Section~\ref{sec:experiment},
demonstrate the efficacy of the proposed algorithm in two scenarios of the sparse spectral clustering and the sparse principal component analysis.

A preliminary short version of this paper was partially presented in~\cite{Kume-Yamada24}.

\subsection*{Contributions and Novelties of This Paper}
\begin{enumerate}[label=(\Roman*)]
  \item
        A framework of a parametrization strategy for Problem~\ref{problem:constrained} via Problem~\ref{problem:origin} is presented in Section~\ref{sec:necessary} where relations between stationarities of Problems~\ref{problem:constrained} and Problem~\ref{problem:origin} are presented in Theorem~\ref{theorem:necessary} and Corollary~\ref{corollary:optimality_C_F}.
        Parametrization strategies have been originally developed for smooth optimization problems (see, e.g.,~\cite{Yamada-Ezaki03,Fraikin-Huper-Dooren07,Hori-Tanaka10,Helfrich-Willmott-Ye18,Lezcano19,Kume-Yamada21,Kume-Yamada22,Levin-Kileel-Boumal24}).
        In contrast, our framework enables us to apply a parametrization strategy even to nonsmooth optimization problems.
  \item
        The gradient consistency property of our smoothed surrogate function
        $(h+\moreau{g}{\mu}\circ \mathfrak{S}) \circ \param$
        for
        $\cost\circ\param$
        in Problem~\ref{problem:origin}
        is presented in Section~\ref{sec:smoothing}
        (see Theorem~\ref{theorem:stationary} and Remark~\ref{remark:gradient_consistency}~\ref{enum:converse_inclusion}).
        The gradient consistency property plays a key role in establishing an asymptotic convergence property of smoothing-type algorithms~\cite{Chen12} (see also Appendix~\ref{appendix:related_work:smoothing}).
        In a simple case where
        $\param\coloneqq \Id$
        and
        $\mathfrak{S}$
        is a linear operator,
        our smoothed surrogate function
        $(h+\moreau{g}{\mu}\circ \mathfrak{S}) \circ \param$
        has been used in variable smoothing algorithms~\cite{Bot-Hendrich15,Bohm-Wright21,Beck-Rosset23,Peng-Wu-Hu-Deng23};
        nevertheless, the gradient consistency property of
        $(h+\moreau{g}{\mu}\circ \mathfrak{S}) \circ \param$
        has not been reported yet
        to the best of the authors' knowledge.
        Hence, Theorem~\ref{theorem:stationary} bridges smoothing-type algorithms, e.g.,~\cite{Chen12}, and variable smoothing algorithms~\cite{Bot-Hendrich15,Bohm-Wright21,Beck-Rosset23,Peng-Wu-Hu-Deng23}.

  \item
        In Section~\ref{sec:proposed}, we propose a single-loop variable smoothing algorithm for Problem~\ref{problem:constrained} via Problem~\ref{problem:origin}.
        For the proposed algorithm, we present an asymptotic convergence guarantee to a stationary point
        $\bm{y}^{\star} \in \mathcal{Y}$
        of Problem~\ref{problem:origin}
        (Theorem~\ref{theorem:convergence_extension})
        by leveraging the gradient consistency property in Theorem~\ref{theorem:stationary}.
        Moreover,
        $\param(\bm{y}^{\star})\in C$
        is a stationary point of Problem~\ref{problem:constrained}
        under a certain condition of
        $\param$
        (see Theorem~\ref{theorem:necessary} and Corollary~\ref{corollary:optimality_C_F}).
        Even without this condition on
        $\param$,
        $\param(\bm{y}^{\star})$
        satisfies a necessary condition for the local optimality of
        Problem~\ref{problem:constrained} 
        (see Remark~\ref{remark:submersion}).
        To the best of the authors' knowledge, for Problem~\ref{problem:constrained} with
        a nonlinear continuously differentiable mapping
        $\mathfrak{S}$
        and a weakly convex function
        $g$,
        the proposed algorithm is the first single-loop algorithm with an asymptotic convergence guarantee to a stationary point of Problem~\ref{problem:origin},
        and hence
        to a stationary point of Problem~\ref{problem:constrained}
        under the above condition on
        $\param$.
        In particular, the proposed algorithm reproduces the variable smoothing algorithm~\cite{Bohm-Wright21} in the unconstrained case
        $C=\mathcal{X}$
        (equivalently,
        $\param=\Id$
        in Problem~\ref{problem:origin})
        and linear
        $\mathfrak{S}$,
        while the proposed algorithm with an asymptotic convergence guarantee covers the case of nonlinear
        $\mathfrak{S}$
        (Note: such an asymptotic convergence guarantee has not been established in~\cite{Bohm-Wright21}; see also Table~\ref{table:nonsmooth}).
  \item
        As a supplementary result, Corollary~\ref{corollary:complexity} establishes a first-order oracle complexity
        $\mathcal{O}(\epsilon^{-3})$
        of the proposed algorithm
        for finding an $\epsilon$-approximate stationary point of Problem~\ref{problem:origin}.
        We note that our approximate stationarity notion in Definition~\ref{definition:complexity}~\ref{enum:definition:complexity:approximate_stationarity} is not directly comparable with the notions used in, e.g.,~\cite{Peng-Wu-Hu-Deng23,Beck-Rosset23,Xu-Jian-Liu-So25} and~\cite{Chen-Ma-Man-Anthony-Zhang20,Wang-Liu-Chen-Ma-Xue-Zhao22}, for Problem~\ref{problem:constrained}
        because the underlying stationarity notions differ.
        Instead, our notion can be viewed as a generalization of the $\epsilon$-approximate stationarity used in the variable smoothing algorithm~\cite{Bohm-Wright21} (see Remark~\ref{remark:approximate_stationarity}).
        More precisely,
        in a special case
        $\param=\Id$
        and
        linear
        $\mathfrak{S}$,
        Definition~\ref{definition:complexity}~\ref{enum:definition:complexity:approximate_stationarity} reduces to the approximate stationarity notion in~\cite{Bohm-Wright21},
        and hence our analysis in Corollary~\ref{corollary:complexity}
        recovers the oracle complexity
        $\mathcal{O}(\epsilon^{-3})$
        of~\cite{Bohm-Wright21} under that special case.

\end{enumerate}

\noindent
{\bf Notation}
$\mathbb{N}$,
$\mathbb{R}$,
$\mathbb{R}_{+}$,
and
$\mathbb{R}_{++}$
denote respectively the sets of all positive integers, all real numbers, all nonnegative real numbers, and all positive real numbers.
$\inprod{\cdot}{\cdot}_{\mathcal{X}}$
and
$\norm{\cdot}_{\mathcal{X}}$
stand respectively for the standard inner product and its induced norm defined in the Euclidean space
$\mathcal{X}$,
i.e.,
$\norm{\bm{x}}_{\mathcal{X}} = \sqrt{\inprod{\bm{x}}{\bm{x}}_{\mathcal{X}}}\ (\bm{x}\in \mathcal{X})$.
We simply use
$\inprod{\cdot}{\cdot}$
and
$\norm{\cdot}$
without any subscript if their domains are clear.
The symbol
$\mathrm{Id}$
stands for the identity operator.
For a linear operator
$A:\mathcal{X}\to \mathcal{Y}$,
its adjoint operator
$A^{*}:\mathcal{Y}\to \mathcal{X}$
is defined as the linear operator satisfying
$\inprod{\bm{x}}{A^{*}\bm{y}}_{\mathcal{X}} = \inprod{A\bm{x}}{\bm{y}}_{\mathcal{Y}}\ (\forall \bm{x}\in \mathcal{X}, \bm{y}\in \mathcal{Y})$,
$\mathrm{Ker}(A)\coloneqq  \{\bm{x} \in \mathcal{X} \mid A\bm{x} =\bm{0}\}$
and
$\mathrm{Ran}(A)\coloneqq  \{A\bm{x} \in \mathcal{Y} \mid \bm{x} \in \mathcal{X}\}$
are respectively the kernel (null) space and the range space of
$A$,
and $\norm{A}_{\rm op}\coloneqq \sup_{\norm{\bm{x}}_{\mathcal{X}}\leq 1, \bm{x}\in \mathcal{X}} \norm{A\bm{x}}_{\mathcal{Y}}$
denotes the operator norm of
$A$.
The symbol
$(\cdot)^{\perp}$
denotes the orthogonal complement subspace of a given set.
For a matrix
$\bm{X} \in \mathbb{R}^{m\times n}$,
$[\bm{X}]_{i,j}$
denotes the
$(i,j)$
entry of
$\bm{X}$,
and
$\bm{X}^{\TT}$
denotes the transpose of
$\bm{X}$.
For a given point
$\widebar{\bm{x}} \in \mathcal{X}$
and a given set
$E \subset \mathcal{X}$,
$d:\mathcal{X}\times 2^{\mathcal{X}} \to \mathbb{R}_{+} \cup \{+\infty\}$
stands for the distance function, i.e.,
\begin{equation}
  d(\widebar{\bm{x}},E)\coloneqq
  \left\{
  \begin{array}{cl}
    \inf\{\norm{\bm{v}-\widebar{\bm{x}}}\mid \bm{v} \in E\}, & \mathrm{if}\  E\neq \emptyset; \\
    +\infty,                                                 & \mathrm{if}\  E=\emptyset.
  \end{array}
  \right. \label{eq:distance}
\end{equation}
For a given
$(a_{n})_{n=1}^{\infty} \subset \mathbb{R}$
and a given
$\widebar{a} \in \mathbb{R}$,
$a_{n} \searrow \widebar{a}$
and
$(a_{n})_{n=1}^{\infty} \searrow \widebar{a}$
mean in this paper that
$(a_{n})_{n=1}^{\infty}$
is
monotonically nonincreasing together with
$\lim_{n\to\infty} a_{n} = \widebar{a}$.

For a differentiable mapping
$\mathcal{F}:\mathcal{Y}\to \mathcal{X}$,
its G\^{a}teaux derivative at
$\bm{y}\in \mathcal{Y}$
is the linear operator
$\mathrm{D}\mathcal{F}(\bm{y}):\mathcal{Y} \to \mathcal{X}$
defined by
\begin{equation}
  (\bm{v}\in \mathcal{Y}) \quad \mathrm{D}\mathcal{F}(\bm{y})[\bm{v}] = \lim_{\mathbb{R}\setminus \{0\} \ni t\to 0}\frac{\mathcal{F}(\bm{y}+t\bm{v}) - \mathcal{F}(\bm{y})}{t}.
\end{equation}
For a
differentiable function
$J:\mathcal{X} \to \mathbb{R}$,
$\nabla J(\bm{x}) \in \mathcal{X}$
is the gradient of
$J$
at
$\bm{x} \in \mathcal{X}$
if
$\mathrm{D}J(\bm{x})[\bm{v}] = \inprod{\nabla J(\bm{x})}{\bm{v}}$
for all
$\bm{v} \in \mathcal{X}$.
Note that for a differentiable mapping
$\mathcal{F}:\mathcal{Y} \to \mathcal{X}$,
we have the expression\footnote{
  It can be checked by
  $(\forall\bm{v}\in \mathcal{Y})\ \inprod{\nabla (J\circ \mathcal{F})(\bm{x})}{\bm{v}}_{\mathcal{Y}} = \mathrm{D}(J\circ \mathcal{F})(\bm{x})[\bm{v}] = \mathrm{D}J(\mathcal{F}(\bm{x}))[\mathrm{D}\mathcal{F}(\bm{x})[\bm{v}]] = \inprod{\nabla J(\mathcal{F}(\bm{x}))}{\mathrm{D}\mathcal{F}(\bm{x})[\bm{v}]}_{\mathcal{X}} = \inprod{(\mathrm{D}\mathcal{F}(\bm{x}))^{*}[\nabla J(\mathcal{F}(\bm{x}))]}{\bm{v}}_{\mathcal{Y}}$.}
$\nabla (J\circ \mathcal{F})(\bm{y}) = (\mathrm{D}\mathcal{F}(\bm{y}))^{*}[\nabla J(\mathcal{F}(\bm{y}))]\ (\bm{y} \in \mathcal{Y})$.
For a function
$J:\mathcal{X} \to \mathbb{R}\cup\{+\infty\}$,
$\dom{J}\coloneqq \{\bm{x}\in \mathcal{X} \mid J(\bm{x}) < +\infty\} \subset \mathcal{X}$
and
$\epi{J}:=\{(\bm{x},a) \in \mathcal{X}\times \mathbb{R} \mid J(\bm{x})\leq a\} \subset \mathcal{X}\times \mathbb{R}$
denote respectively the domain and the epigraph of
$J$.
Moreover,
$J$
is said to be
(i)
{\em proper} if
$\dom{J} \neq \emptyset$;
(ii)
{\em lower semicontinuous} if
$\epi{J}$
is closed in
$\mathcal{X} \times \mathbb{R}$;
(iii)
{\em locally Lipschitz continuous} at
$\widebar{\bm{x}} \in \mathcal{X}$
if there exist an open neighborhood
$\mathcal{N}_{\widebar{\bm{x}}} \subset \mathcal{X}$
of
$\widebar{\bm{x}}$
and
$L > 0$
such that
$\abs{J(\bm{x}_{1})-J(\bm{x}_{2})} \leq L \norm{\bm{x}_{1} - \bm{x}_{2}}\ (\bm{x}_{1},\bm{x}_{2}\in \mathcal{N}_{\widebar{\bm{x}}})$.
For a function
$J:\mathcal{X} \to \mathbb{R}\cup\{-\infty,+\infty\}$,
{\em the limit inferior}~\cite[Definition 1.5]{Rockafellar-Wets98}
and
  {\em the limit superior}~\cite[p.13]{Rockafellar-Wets98}
of
$J$
at
$\widebar{\bm{x}} \in \mathcal{X}$
are defined respectively by
  {
    \begin{align}
      \liminf_{\mathcal{X}\ni \bm{x}\to \widebar{\bm{x}}} J(\bm{x})
       & \coloneqq
      \sup_{\epsilon > 0} \left(\inf_{\norm{\bm{x}-\widebar{\bm{x}}}< \epsilon} J(\bm{x})\right)
      \coloneqq
      \sup\limits_{\epsilon > 0} \left(\inf\left\{
      J(\bm{x})\in \mathbb{R}\cup\{-\infty,+\infty\}  \mid \norm{\bm{x}-\widebar{\bm{x}}} < \epsilon \right\}\right); \\
      \limsup_{\mathcal{X}\ni \bm{x}\to \widebar{\bm{x}}} J(\bm{x})
       & \coloneqq
      \inf_{\epsilon > 0} \left(\sup_{\norm{\bm{x}-\widebar{\bm{x}}}< \epsilon} J(\bm{x})\right)
      \coloneqq
      \inf\limits_{\epsilon > 0} \left(\sup\left\{
      J(\bm{x})\in \mathbb{R} \cup\{-\infty,+\infty\}  \mid \norm{\bm{x}-\widebar{\bm{x}}} < \epsilon \right\}\right).
      \label{eq:limsup}
    \end{align}}%

For a set sequence
$(E_{n})_{n=1}^{\infty}$
with subsets
$E_{n} \subset \mathcal{X}$,
{\em the outer limit}~\cite[Definition 4.1]{Rockafellar-Wets98} of
$(E_{n})_{n=1}^{\infty}$
is defined by
\begin{equation}
  \Limsup_{n\to\infty} E_{n}
  \coloneqq  \left\{\bm{v} \in \mathcal{X} \mid
  \exists\mathcal{N}\subset \mathbb{N}, \exists \bm{v}_{n}\in E_{n}\ (n\in \mathcal{N})
  \ {\rm with}\  \abs{\mathcal{N}}=\aleph_{0}\ {\rm and}\ \bm{v}= \lim_{\mathcal{N}\ni n\to\infty} \bm{v}_{n}
  \right\},
\end{equation}
where
$\aleph_{0}$
stands for the cardinality of
$\mathbb{N}$.
With the outer limit of a set sequence,
the outer limit~\cite[p.152]{Rockafellar-Wets98} of
a set-valued mapping
$S:\mathcal{X}\rightrightarrows\mathcal{X}$
at
$\widebar{\bm{x}} \in \mathcal{X}$
is defined as
\begin{align}
   & \Limsup_{\mathcal{X}\ni\bm{x}\to\widebar{\bm{x}}} S(\bm{x})
  \coloneqq
  \bigcup_{\mathcal{X} \ni \bm{x}_{n}\to \widebar{\bm{x}}}
  \Limsup_{n\to\infty} S(\bm{x}_{n})\label{eq:outer_limit_another}                                                                                                                                                                                                                           \\
   & =  \left\{\bm{v} \in \mathcal{X} \mid \exists (\bm{x}_{n})_{n=1}^{\infty} \subset \mathcal{X},\ \exists \bm{v}_{n} \in S(\bm{x}_{n})\ \mathrm{with}\  \widebar{\bm{x}}=\lim_{n\to\infty}\bm{x}_{n}\ \mathrm{and}\ \bm{v} = \lim_{n\to\infty}\bm{v}_{n} \right\}, \label{eq:outer_limit}
\end{align}
where
we follow a clear notation
``$\Limsup$''
(used, e.g., in~\cite{Burke-Hoheisel13}).
In this paper, even for the outer limit of a single-valued mapping
$S:\mathcal{X}\to\mathcal{X}$,
we use
$\Limsup_{\mathcal{X}\ni\bm{x}\to\widebar{\bm{x}}}S(\bm{x}) \coloneqq  \Limsup_{\mathcal{X}\ni \bm{x}\to\widebar{\bm{x}}}\{S(\bm{x})\}$
in order to distinguish the standard notation for the limit superior
``$\limsup$''
in~\eqref{eq:limsup}.

\section{Preliminaries}
\subsection{Subdifferential and Normal Cone}
We review some necessary notions, namely subdifferentials and normal cones, in nonsmooth analysis.
We refer to the seminal book~\cite{Rockafellar-Wets98} for a thorough review.
\begin{definition}[Subdifferential~{\cite[Def. 8.3]{Rockafellar-Wets98}}]\label{definition:subdifferential}
  For a possibly nonconvex function
  $J:\mathcal{X} \to \exR$,
  {\em the limiting (or general) subdifferential} of
  $J$
  at
  $\widebar{\bm{x}}\in \mathcal{X}$
  is defined by
  \begin{equation}
    (\widebar{\bm{x}} \in \dom{J}) \quad
    \Lsubdiff  J(\widebar{\bm{x}}) \coloneqq
    \Limsup_{\mathcal{X}\ni \bm{x}\to \widebar{\bm{x}},\ J(\bm{x})\to J(\widebar{\bm{x}})} \Fsubdiff J(\bm{x}),
    \label{eq:general_subdifferential}
  \end{equation}
  where
  $\Fsubdiff  J(\widebar{\bm{x}}) \coloneqq \left\{\bm{v} \in \mathcal{X} \mid \liminf_{\mathcal{X}\setminus\{\widebar{\bm{x}}\}\ni\bm{x}\to \widebar{\bm{x}}} \frac{J(\bm{x}) - J(\widebar{\bm{x}}) - \inprod{\bm{v}}{\bm{x}-\widebar{\bm{x}}}}{\norm{\bm{x}-\widebar{\bm{x}}}}  \geq 0\right\}$
  is called {\em the Fr\'echet (or regular)  subdifferential} of
  $J$
  at
  $\widebar{\bm{x}} \in \dom{J}$,
  and
  $\Lsubdiff J(\widebar{\bm{x}})$
  and
  $\Fsubdiff J(\widebar{\bm{x}})$
  at
  $\widebar{\bm{x}} \notin \dom{J}$
  are understood as
  $\emptyset$
  (Note: for a convex function
  $J$,
  $\Lsubdiff  J$
  and
  $\Fsubdiff  J$
  are identical to the standard convex subdifferential of
  $J$~\cite[Prop. 8.12]{Rockafellar-Wets98}).
\end{definition}

By using the subdifferentials,
{\em the general normal cone}
$N_{C}(\widebar{\bm{x}})$
and
  {\em the regular normal cone}
$\widehat{N}_{C}(\widebar{\bm{x}})$
to a nonempty subset
$C\subset \mathcal{X}$
at
$\widebar{\bm{x}} \in C$
are defined (see~\cite[Def.~6.3,~Prop.~6.5, and~Ex.~8.14]{Rockafellar-Wets98}) respectively by
\begin{align}
  N_{C}(\widebar{\bm{x}})
   & \coloneqq
  \Lsubdiff \iota_{C}(\widebar{\bm{x}})
  =
  \Limsup_{C\ni \bm{x}\to \widebar{\bm{x}}}  \Fsubdiff \iota_{C}(\widebar{\bm{x}})
  = \Limsup_{C\ni \bm{x}\to \widebar{\bm{x}}} \widehat{N}_{C}(\widebar{\bm{x}}) \label{eq:general_normal} \\
  \widehat{N}_{C}(\widebar{\bm{x}})
   & \coloneqq \Fsubdiff \iota_{C}(\widebar{\bm{x}})
  = \left\{\bm{v} \in \mathcal{X} \mid \limsup_{C\setminus \{\widebar{\bm{x}}\}\ni \bm{x}\to \widebar{\bm{x}}} \frac{\inprod{\bm{v}}{\bm{x}-\widebar{\bm{x}}}}{\norm{\bm{x}-\widebar{\bm{x}}}} \leq 0\right\},
  \label{eq:regular_normal}
\end{align}
where
$N_{C}(\widebar{\bm{x}})$
and
$\widehat{N}_{C}(\widebar{\bm{x}})$
at
$\widebar{\bm{x}} \notin C$
are understood as
$\emptyset$.

With notions of normal cone operators, we define regularities of a subset and functions as follows.
Such regularities ensure the chain and sum rules for the limiting subdifferential as shown in Lemma~\ref{lemma:subdifferentially} below (see also Fact~\ref{fact:chain_rule} in Appendix~\ref{appendix:lemma:subdifferentially}).

\begin{definition}[Regularity]\label{definition:regular}
  \mbox{}
  \begin{enumerate}[label=(\alph*)]
    \item (Clarke regularity~\cite[Def. 6.4]{Rockafellar-Wets98})
          A nonempty set
          $C \subset \mathcal{X}$
          is said to be {\em Clarke regular} at
          $\widebar{\bm{x}} \in C$
          if
          (i)
          $N_{C}(\widebar{\bm{x}}) = \widehat{N}_{C}(\widebar{\bm{x}})$ (Note: the inclusion
          $N_{C}(\widebar{\bm{x}}) \supset \widehat{N}_{C}(\widebar{\bm{x}})$
          always holds),
          and
          (ii)
          $C \cap \overline{\mathcal{N}}_{\widebar{\bm{x}}}$
          is closed for some closed neighborhood\footnote{
          In other words,
          $\overline{\mathcal{N}}_{\widebar{\bm{x}}}$
          is a closed set that contains
          $\widebar{\bm{x}}$
          as an interior point of
          $\overline{\mathcal{N}}_{\widebar{\bm{x}}}$.
          }
          $\overline{\mathcal{N}}_{\widebar{\bm{x}}} \subset \mathcal{X}$
          of
          $\widebar{\bm{x}}$.
          Moreover,
          $C$
          is said to be Clarke regular if it is Clarke regular at every
          $\widebar{\bm{x}} \in C$.
    \item (Subdifferential regularity~\cite[Def. 7.25]{Rockafellar-Wets98}) \label{enum:definition:regular:function}
          A proper lower semicontinuous function
          $J:\mathcal{X}\to \exR$
          is said to be {\em subdifferentially regular} at
          $\widebar{\bm{x}} \in \dom{J}$
          if
          $\epi{J} \subset \mathcal{X}\times \mathbb{R}$
          is Clarke regular
          at
          $(\widebar{\bm{x}}, J(\widebar{\bm{x}}))$.
          Moreover,
          $J$
          is said to be subdifferentially regular if it is subdifferentially regular at every
          $\widebar{\bm{x}} \in \dom{J}$.
          We note that all functions in this paper are shown to be subdifferentially regular (see, e.g., Lemma~\ref{lemma:subdifferentially} below and also Example~\ref{ex:regular}).
  \end{enumerate}
\end{definition}

A typical Clarke regular set is a closed convex set~\cite[Thm. 6.9]{Rockafellar-Wets98}.
Another important example of Clarke regular sets is an embedded submanifold of the Euclidean space
$\mathcal{X}$~\cite[Exm. 6.8]{Rockafellar-Wets98}
as follows.

\begin{example}[Embedded submanifold of $\mathcal{X}${~\cite[Exm. 6.8]{Rockafellar-Wets98}}]\label{ex:normal}
  Let
  $(\emptyset \neq)C \subset \mathcal{X}$
  be a
  $d (\leq \dim(\mathcal{X}))$-dimensional embedded submanifold of
  $\mathcal{X}$,
  i.e.,
  for each
  $\widebar{\bm{x}} \in C$,
  there exist
  (i) an open neighborhood
  $\mathcal{N}_{\widebar{\bm{x}}} \subset \mathcal{X}$
  of
  $\widebar{\bm{x}}$
  in
  $\mathcal{X}$
  and
  (ii)
  a continuously differentiable mapping
  $H_{\widebar{\bm{x}}}:\mathcal{N}_{\widebar{\bm{x}}} \to \mathbb{R}^{\dim(\mathcal{X}) - d}$
  such that
  $C\cap \mathcal{N}_{\widebar{\bm{x}}} = H_{\widebar{\bm{x}}}^{-1}(\bm{0})$
  and
  $\mathrm{Rank}(\mathrm{D}H_{\widebar{\bm{x}}}(\widebar{\bm{x}})) = \dim(\mathcal{X})-d$,
  where
  $H_{\widebar{\bm{x}}}$
  is called a {\em local defining map}~\cite[Def. 3.10]{Boumal23}
  (Note: by Whitney's embedding theorem (see, e.g.,~\cite[Thm. 6.15]{Lee12}), every
  $d$-dimensional smooth manifold can be seen as an embedded submanifold in
  $\mathbb{R}^{2d+1}$).
  Then, by~\cite[Def. 6.4 and Exm. 6.8]{Rockafellar-Wets98}, the embedded submanifold
  $C$
  is Clarke regular, and
  we have the following expression:
  \begin{equation}
    (\widebar{\bm{x}} \in C) \quad
    N_{C}(\widebar{\bm{x}}) = \widehat{N}_{C}(\widebar{\bm{x}})
    = \mathrm{Ran}((\mathrm{D}H_{\widebar{\bm{x}}}(\widebar{\bm{x}}))^{*})\subset \mathcal{X},
    \quad
    T_{C}(\widebar{\bm{x}}) = \left(N_{C}(\widebar{\bm{x}})\right)^{\perp} \subset \mathcal{X},
    \label{eq:manifold_tangent}
  \end{equation}
  where the orthogonal complement subspace
  $T_{C}(\widebar{\bm{x}})$
  of
  $N_{C}(\widebar{\bm{x}})$
  is called the {\em tangent cone to $C$ at $\widebar{\bm{x}}$}
  (Note:
  $N_{C}(\widebar{\bm{x}})$
  and
  $T_{C}(\widebar{\bm{x}})$
  are also called respectively the normal space and the tangent space to the manifold
  $C$
  at
  $\widebar{\bm{x}} \in C$~\cite[Thm. 3.15, Def. 3.16, and p.107]{Boumal23}).
\end{example}

\begin{lemma}[Subdifferential calculus for Problems~\ref{problem:constrained} and~\ref{problem:origin}] \label{lemma:subdifferentially}
  Consider
  $\cost= h+g\circ\mathfrak{S}$,
  $C \subset \mathcal{X}$
  and
  $\param$
  in Problems~\ref{problem:constrained} and~\ref{problem:origin}.
  Then,
  $\cost$,
  $\cost+\iota_{C}$
  and
  $\cost\circ\param$
  are subdifferentially regular, and
  we have the following expressions:
  \begin{align}
    (\widebar{\bm{x}} \in C) \quad           &
    \Lsubdiff  (\cost + \iota_{C})(\widebar{\bm{x}}) =
    \Lsubdiff  \cost (\widebar{\bm{x}}) + N_{C}(\widebar{\bm{x}}); \label{eq:subdifferentially_f_C} \\
    (\widebar{\bm{y}} \in \mathcal{Y}) \quad &
    \Lsubdiff  (\cost \circ \param )(\widebar{\bm{y}}) = (\mathrm{D}\param (\widebar{\bm{y}}))^{*}[\Lsubdiff \cost(\param (\widebar{\bm{y}}))]. \label{eq:chain2}
  \end{align}
\end{lemma}
\begin{proof}
  See Appendix~\ref{appendix:lemma:subdifferentially}.
\end{proof}

The following states a relation between the normal cone to the parameterized set, and the parametrization
$\param $.
\begin{fact}[Normal cone to image sets~{\cite[Thm. 6.43]{Rockafellar-Wets98}}] \label{fact:normal_image}
  Let
  $\mathcal{X},\mathcal{Y}$
  be Euclidean spaces, and
  $\param :\mathcal{Y} \to \mathcal{X}$
  a continuously differentiable mapping.
  Then, for every
  $\widebar{\bm{x}} \in \param (\mathcal{Y})$,
  \begin{equation}
    \thickmuskip=0.3\thickmuskip
    \medmuskip=0.3\medmuskip
    \thinmuskip=0.3\thinmuskip
    \arraycolsep=0.3\arraycolsep
    \widehat{N}_{ \param (\mathcal{Y})}(\widebar{\bm{x}})
    \subset
    \bigcap_{\widebar{\bm{y}} \in \param ^{-1}(\widebar{\bm{x}})} \left\{\bm{v} \in \mathcal{X} \mid (\mathrm{D}\param (\widebar{\bm{y}}))^{*}[\bm{v}] \in \widehat{N}_{\mathcal{Y}}(\widebar{\bm{y}}) = \{\bm{0}\}\right\}
    = \bigcap_{\widebar{\bm{y}} \in \param ^{-1}(\widebar{\bm{x}})} \mathrm{Ker} \left((\mathrm{D}\param (\widebar{\bm{y}}))^{*}\right),
  \end{equation}
  where
  $\param ^{-1}(\widebar{\bm{x}})\coloneqq \{\widebar{\bm{y}} \in \mathcal{Y} \mid \param (\widebar{\bm{y}}) = \widebar{\bm{x}}\} \subset \mathcal{Y}$
  is the preimage of
  $\widebar{\bm{x}} \in \mathcal{X}$
  under
  $\param $.
\end{fact}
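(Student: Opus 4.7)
The plan is to fix an arbitrary $\bm{v} \in \widehat{N}_{F(\mathcal{Y})}(\widebar{\bm{x}})$ and an arbitrary $\widebar{\bm{y}} \in F^{-1}(\widebar{\bm{x}})$, then show directly that $(\mathrm{D}F(\widebar{\bm{y}}))^{*}[\bm{v}] = \bm{0}$ by testing against all directions $\bm{u} \in \mathcal{Y}$. The equality $\widehat{N}_{\mathcal{Y}}(\widebar{\bm{y}}) = \{\bm{0}\}$ is immediate since $\widebar{\bm{y}}$ is an interior point of $\mathcal{Y}$, so the content of the claim really is the kernel characterization on the right.

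First I would probe the regular normal cone using smooth curves inside $F(\mathcal{Y})$. For a direction $\bm{u} \in \mathcal{Y}$ with $\mathrm{D}F(\widebar{\bm{y}})[\bm{u}] \neq \bm{0}$, consider $\bm{y}_{t}\coloneqq \widebar{\bm{y}} + t\bm{u}$ for $t>0$ small. By the continuous differentiability of $F$ we have $F(\bm{y}_{t}) - \widebar{\bm{x}} = t\,\mathrm{D}F(\widebar{\bm{y}})[\bm{u}] + o(t)$, so $F(\bm{y}_{t}) \in F(\mathcal{Y})\setminus \{\widebar{\bm{x}}\}$ and $F(\bm{y}_{t}) \to \widebar{\bm{x}}$ as $t \to 0^{+}$. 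Plugging this path into the definition~\eqref{eq:regular_normal} of $\widehat{N}_{F(\mathcal{Y})}(\widebar{\bm{x}})$ and passing to the limit along $t\searrow 0$, the leading-order cancellation yields
\begin{equation}
0 \geq \lim_{t\to 0^{+}} \frac{\inprod{\bm{v}}{F(\bm{y}_{t}) - \widebar{\bm{x}}}}{\norm{F(\bm{y}_{t}) - \widebar{\bm{x}}}}
= \frac{\inprod{(\mathrm{D}F(\widebar{\bm{y}}))^{*}[\bm{v}]}{\bm{u}}}{\norm{\mathrm{D}F(\widebar{\bm{y}})[\bm{u}]}}.
\end{equation}
Applying the same argument with $-\bm{u}$ in place of $\bm{u}$ (which also satisfies $\mathrm{D}F(\widebar{\bm{y}})[-\bm{u}] \neq \bm{0}$) gives the reverse inequality, so $\inprod{(\mathrm{D}F(\widebar{\bm{y}}))^{*}[\bm{v}]}{\bm{u}} = 0$ for every such $\bm{u}$.

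Next I would close the gap on the remaining directions. For $\bm{u} \in \mathrm{Ker}(\mathrm{D}F(\widebar{\bm{y}}))$, the adjoint relation gives $\inprod{(\mathrm{D}F(\widebar{\bm{y}}))^{*}[\bm{v}]}{\bm{u}} = \inprod{\bm{v}}{\mathrm{D}F(\widebar{\bm{y}})[\bm{u}]} = 0$ for free. Combining these two cases, $(\mathrm{D}F(\widebar{\bm{y}}))^{*}[\bm{v}]$ is orthogonal to all of $\mathcal{Y}$, hence $(\mathrm{D}F(\widebar{\bm{y}}))^{*}[\bm{v}] = \bm{0}$. Since $\widebar{\bm{y}} \in F^{-1}(\widebar{\bm{x}})$ was arbitrary, $\bm{v}$ lies in the intersection $\bigcap_{\widebar{\bm{y}} \in F^{-1}(\widebar{\bm{x}})} \mathrm{Ker}((\mathrm{D}F(\widebar{\bm{y}}))^{*})$, which is the desired conclusion.

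The main obstacle I anticipate is bookkeeping the regime where the curve $F(\bm{y}_{t})$ could coincide with $\widebar{\bm{x}}$ or where the asymptotic denominator degenerates. The restriction $\mathrm{D}F(\widebar{\bm{y}})[\bm{u}] \neq \bm{0}$ exactly prevents both pathologies by forcing the $O(t)$ term in the expansion of $F(\bm{y}_{t}) - \widebar{\bm{x}}$ to dominate the $o(t)$ remainder for all sufficiently small $t>0$; the complementary kernel directions are then handled trivially by the adjoint identity, so the two regimes together cover $\mathcal{Y}$ without overlap issues.
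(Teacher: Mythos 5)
Your proof is correct, but it follows a genuinely different route from the paper's. The paper does not argue from first principles: it obtains the inclusion by citing the general image-set result~\cite[Thm.~6.43]{Rockafellar-Wets98} (which bounds $\widehat{N}_{F(C)}(\widebar{\bm{x}})$ by preimage conditions involving $\widehat{N}_{C}$ for an arbitrary closed set $C$) specialized to $C=\mathcal{Y}$, and then only records the trivial identity $(\mathrm{D}F(\widebar{\bm{y}}))^{*}[\bm{v}] \in \{\bm{0}\} \Leftrightarrow \bm{v} \in \mathrm{Ker}((\mathrm{D}F(\widebar{\bm{y}}))^{*})$. You instead prove the inclusion directly by probing the definition~\eqref{eq:regular_normal} along the paths $t \mapsto F(\widebar{\bm{y}}+t\bm{u})$: when $\mathrm{D}F(\widebar{\bm{y}})[\bm{u}] \neq \bm{0}$, the expansion $F(\widebar{\bm{y}}+t\bm{u}) = \widebar{\bm{x}} + t\,\mathrm{D}F(\widebar{\bm{y}})[\bm{u}] + o(t)$ keeps the path in $F(\mathcal{Y})\setminus\{\widebar{\bm{x}}\}$ with non-degenerate denominator, the limit along the path is dominated by the $\limsup$ in~\eqref{eq:regular_normal}, and the two-sided test with $\pm\bm{u}$ --- available exactly because $\widebar{\bm{y}}$ is an unconstrained point of $\mathcal{Y}$, which is also why $\widehat{N}_{\mathcal{Y}}(\widebar{\bm{y}})=\{\bm{0}\}$ --- upgrades the one-sided normal-cone inequality to $\inprod{(\mathrm{D}F(\widebar{\bm{y}}))^{*}[\bm{v}]}{\bm{u}} = 0$; the kernel directions follow for free from the adjoint identity, and the two cases exhaust $\mathcal{Y}$. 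What each approach buys: yours is elementary and self-contained (it needs only the directional first-order expansion, which holds under the paper's continuous-differentiability hypothesis), while the citation buys generality --- in Rockafellar--Wets the preimage set may be a proper closed set $C$, where the $\pm\bm{u}$ symmetry is unavailable and only the cone containment $(\mathrm{D}F(\widebar{\bm{y}}))^{*}[\bm{v}] \in \widehat{N}_{C}(\widebar{\bm{y}})$ survives. For the special case actually needed here, your direct argument is a valid and clean replacement for the citation.
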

\begin{proof}
  The first inclusion is verified by~\cite[Thm. 6.43]{Rockafellar-Wets98}.
  The equality follows
  by the relation
  $(\mathrm{D}\param (\widebar{\bm{y}}))^{*}[\bm{v}] \in  \{\bm{0}\} \Leftrightarrow \bm{v} \in \mathrm{Ker} \left((\mathrm{D}\param (\widebar{\bm{y}}))^{*}\right)\ (\widebar{\bm{y}} \in \param ^{-1}(\widebar{\bm{x}}))$.
\end{proof}

\subsection{Proximity Operator and Moreau Envelope}
{\em The proximity operator} and {\em the Moreau envelope} have been serving as key ingredients for minimization of nonsmooth convex functions (see, e.g.,~\cite{Bauschke-Combettes17,Condat-Kitahara-Contreras-Hirabayashi23}).
Even for weakly convex functions, they are useful in design of nonsmooth optimization algorithms.
We note that closed-form expressions of the proximity operators for commonly-used weakly convex functions have been found, e.g.,
$\ell_{1}$-norm~\cite{Bauschke-Combettes17}, Minimax Concave Penalty (MCP)~\cite{Zhang10}, Smoothly Clipped Absolute Deviation (SCAD)~\cite{Fan-Li01},
log-sum penalty~\cite{Prater-Bennette-Shen-Tripp22},
piece-wise exponential function~\cite{Liu-Zhou-Lin24},
and arctangent penalty~\cite{He-Shu-Wen-So24}.

\begin{definition}[Proximity operator and Moreau envelope~{\cite[Def. 1.22]{Rockafellar-Wets98}}]\label{definition:Moreau}
  Let
  $g:\mathcal{Z}\to \exR$
  be proper lower semicontinuous, and
  $\eta$-weakly convex, i.e.,
  $g+\eta/2\norm{\cdot}^{2}$
  is convex with
  $\eta > 0$.
  For
  $\mu \in (0,\eta^{-1})$,
  {\em the proximity operator} and {\em the Moreau envelope} of
  $g$
  of index
  $\mu$
  are defined respectively as
    {
      \thickmuskip=0.1\thickmuskip
      \medmuskip=0.1\medmuskip
      \thinmuskip=0.1\thinmuskip
      \arraycolsep=0.1\arraycolsep
      \begin{align}
        (\widebar{\bm{z}} \in \mathcal{Z}) \  & \prox{\mu g}(\widebar{\bm{z}})  \coloneqq  \argmin_{\bm{z} \in \mathcal{Z}} \left(g(\bm{z}) + \frac{1}{2\mu}\norm{\bm{z}-\widebar{\bm{z}}}^{2}\right); \label{eq:prox} \\
        (\widebar{\bm{z}} \in \mathcal{Z}) \  & \moreau{g}{\mu}(\widebar{\bm{z}}) \coloneqq  \inf_{\bm{z} \in \mathcal{Z}} \left(g(\bm{z}) + \frac{1}{2\mu}\norm{\bm{z}-\widebar{\bm{z}}}^{2}\right)
        = g(\prox{\mu g}(\widebar{\bm{z}})) + \frac{1}{2\mu}\norm{\prox{\mu g}(\widebar{\bm{z}}) - \widebar{\bm{z}}}^{2}
        , \label{eq:Moreau}
      \end{align}}%
  where
  $\prox{\mu g}(\widebar{\bm{z}})$
  is single-valued
  because
  the unique existence of the minimizer is guaranteed by the strong convexity of
  $g(\cdot) + \frac{1}{2\mu}\norm{\cdot-\widebar{\bm{z}}}^{2}$.
\end{definition}

\begin{fact}[Properties of Moreau envelope of weakly convex function] \label{fact:moreau}
  Let
  $g:\mathcal{Z}\to \exR$
  be proper lower semicontinuous, and
  $\eta$-weakly convex
  with some
  $\eta > 0$.
  Then,
  the following hold:
  \begin{enumerate}[label=(\alph*)]
    \item \label{enum:prox_convergence}
          For any
          $\widebar{\bm{z}} \in \mathcal{Z}$
          and
          $(\bm{z}_{n})_{n=1}^{\infty} \subset \mathcal{Z}$
          satisfying
          $\lim_{n\to\infty}\bm{z}_{n} = \widebar{\bm{z}}$,
          we have
          $\lim_{n\to\infty} \prox{\mu_{n} g}(\bm{z}_{n}) = P_{\overline{\dom{\Lsubdiff  g}}}(\widebar{\bm{z}})$
          if
          $(\mu_{n})_{n=1}^{\infty}\searrow 0$
          with
          $\mu_{n} \in (0,\eta^{-1})$~\cite[Lemma 2.3]{Atenas25},
          where
          $\dom{\Lsubdiff  g}\coloneqq \{\bm{z} \in \mathcal{Z} \mid \Lsubdiff  g(\bm{z}) \neq \emptyset\}\subset \mathcal{Z}$,
          $\overline{(\cdot)}$
          denotes the closure of a given set, and
          $P_{\overline{\dom{\Lsubdiff  g}}} = \prox{\iota_{\overline{\dom{\Lsubdiff  g}}}}$
          denotes the projection mapping onto the closed convex set
          $\overline{\dom{\Lsubdiff  g}} \subset \mathcal{Z}$.
    \item \label{enum:gradient_moreau}
          For any
          $\mu \in (0,\eta^{-1})$,
          (i)
          $\moreau{g}{\mu}:\mathcal{Z}\to\mathbb{R}$
          is continuously differentiable with
          $\nabla \moreau{g}{\mu} = \frac{\mathrm{Id} - \prox{\mu g}}{\mu}$,
          and
          (ii)
          $\nabla \moreau{g}{\mu}$
          is Lipschitz continuous with a Lipschitz constant
          $\max\left\{\mu^{-1},\frac{\eta}{1-\eta\mu}\right\}$~\cite[Cor.~3.4]{Hoheisel-Laborde-Oberman20}.
    \item \label{enum:prox_subdifferential}
          $(\widebar{\bm{z}}\in\mathcal{Z}, \mu \in (0,\eta^{-1})) \quad \nabla \moreau{g}{\mu}(\widebar{\bm{z}}) \in \Lsubdiff  g(\prox{\mu g}(\widebar{\bm{z}}))$~\cite[Lemma 3.2]{Bohm-Wright21}.
    \item \label{enum:moreau_bound}
          Assume that
          $g$
          is Lipschitz continuous with a Lipschitz constant
          $L_{g} > 0$.
          By~\cite[Lemma 3.3]{Bohm-Wright21} and~\cite[Prop. 3.4]{Planiden-Wang19}\footnote{
            In~\cite[Prop. 3.4]{Planiden-Wang19}, the inequality~\eqref{eq:relation_weakly_Moreau} is proved under the convexity of
            $g$.
            However, the proof in~\cite[Prop. 3.4]{Planiden-Wang19} can be applied to the weakly convex case as well.
          }, the following hold:
          \begin{align}
            (\widebar{\bm{z}}\in\mathcal{Z}, \mu \in (0,\eta^{-1})) \quad & \norm{\nabla \moreau{g}{\mu}(\widebar{\bm{z}})} \leq L_{g}; \label{eq:moreau_gradient_bound}                                                                    \\
            (\widebar{\bm{z}}\in\mathcal{Z}, \mu \in (0,\eta^{-1})) \quad & \moreau{g}{\mu}(\widebar{\bm{z}}) \leq g(\widebar{\bm{z}}) \leq \moreau{g}{\mu}(\widebar{\bm{z}}) + \frac{1}{2}\mu L_{g}^{2}. \label{eq:relation_weakly_Moreau}
          \end{align}
          Moreover, for
          $0 < \mu_{2} < \mu_{1}< \eta^{-1}$,
          we have~\cite[Thm. 1.25]{Rockafellar-Wets98},
          \cite[Lemma 4.1]{Bohm-Wright21}
          \begin{equation}
            (\widebar{\bm{z}}\in\mathcal{Z}) \quad \moreau{g}{\mu_{1}}(\widebar{\bm{z}}) \leq \moreau{g}{\mu_{2}}(\widebar{\bm{z}}) \leq \moreau{g}{\mu_{1}}(\widebar{\bm{z}}) + \frac{1}{2}\frac{\mu_{1}-\mu_{2}}{\mu_{2}}\mu_{1}L_{g}^{2}.\label{eq:moreau_inequality}
          \end{equation}
  \end{enumerate}
\end{fact}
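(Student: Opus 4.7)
The plan is to handle the four assertions in order, exploiting the unifying reduction that for $\mu\in(0,\eta^{-1})$ the function $\bm{z}\mapsto g(\bm{z})+\frac{1}{2\mu}\norm{\bm{z}-\widebar{\bm{z}}}^{2}$ is strongly convex with modulus $\mu^{-1}-\eta>0$, because $g+\tfrac{\eta}{2}\norm{\cdot}^{2}$ is convex. This guarantees existence, uniqueness, and continuity of $\prox{\mu g}(\widebar{\bm{z}})$, on which every subsequent step depends.

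For \textbf{(b)}, I would derive $\nabla \moreau{g}{\mu}=\tfrac{\mathrm{Id}-\prox{\mu g}}{\mu}$ by applying Danskin's envelope theorem to the strongly convex parametric minimization in~\eqref{eq:Moreau}; the unique minimizer ensures the derivative of the value function equals the partial derivative in $\widebar{\bm{z}}$ of the integrand at the optimum. For the Lipschitz constant, firm nonexpansiveness of $\prox{\mu(g+\tfrac{\eta}{2}\norm{\cdot}^{2})}$ (a genuinely convex proximity operator) combined with an algebraic identity relating $\prox{\mu g}$ to this convex proximity operator yields a bound on $\prox{\mu g}$'s Lipschitz constant; transferring through $(\mathrm{Id}-\prox{\mu g})/\mu$ and splitting cases according to $\mu\lessgtr (2\eta)^{-1}$ produces $\max\{\mu^{-1},\eta/(1-\eta\mu)\}$. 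Assertion \textbf{(c)} is then immediate: the first-order optimality of the strongly convex problem defining $\prox{\mu g}(\widebar{\bm{z}})$ gives $\bm{0}\in\partial g(\prox{\mu g}(\widebar{\bm{z}}))+\mu^{-1}(\prox{\mu g}(\widebar{\bm{z}})-\widebar{\bm{z}})$, which, via the formula of (b), is exactly the claimed inclusion.

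Given (b) and (c), \textbf{(d)} is largely corollary work. The bound $\norm{\nabla \moreau{g}{\mu}(\widebar{\bm{z}})}\leq L_{g}$ follows because (c) identifies the gradient as a subgradient of the $L_{g}$-Lipschitz function $g$, and $L_{g}$-Lipschitz continuity of $g$ forces $\sup_{\bm{v}\in\partial g(\bm{z})}\norm{\bm{v}}\leq L_{g}$ for all $\bm{z}$. The sandwich~\eqref{eq:relation_weakly_Moreau} is then obtained by: evaluating the defining infimum of $\moreau{g}{\mu}$ at $\bm{z}=\widebar{\bm{z}}$ for the lower bound; and using $\norm{\prox{\mu g}(\widebar{\bm{z}})-\widebar{\bm{z}}}=\mu\norm{\nabla \moreau{g}{\mu}(\widebar{\bm{z}})}\leq\mu L_{g}$ together with Lipschitz continuity of $g$, inserted into the closed-form second equality of~\eqref{eq:Moreau}. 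Monotonicity $\moreau{g}{\mu_{1}}\leq\moreau{g}{\mu_{2}}$ for $\mu_{1}>\mu_{2}$ is pointwise from the integrand inequality $\tfrac{1}{2\mu_{1}}\norm{\cdot}^{2}\leq\tfrac{1}{2\mu_{2}}\norm{\cdot}^{2}$; the upper bound of the gap in~\eqref{eq:moreau_inequality} proceeds by substituting $\bm{z}=\prox{\mu_{1}g}(\widebar{\bm{z}})$ into the $\moreau{g}{\mu_{2}}$ infimum and algebraically simplifying using $\norm{\prox{\mu_{1}g}(\widebar{\bm{z}})-\widebar{\bm{z}}}\leq\mu_{1}L_{g}$.

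The main obstacle is \textbf{(a)}, which requires a genuinely variational argument rather than a transcription of (b)--(d). My plan is: first, use $\norm{\prox{\mu_{n} g}(\bm{z}_{n})-\bm{z}_{n}}=\mu_{n}\norm{\nabla \moreau{g}{\mu_{n}}(\bm{z}_{n})}\leq\mu_{n}L_{g}\to 0$ to conclude $\prox{\mu_{n}g}(\bm{z}_{n})$ is bounded and every cluster point $\bm{z}^{\star}$ satisfies $\bm{z}^{\star}=\widebar{\bm{z}}$ when $\widebar{\bm{z}}\in\overline{\dom{\partial g}}$, but this is too crude; instead one must identify $\bm{z}^{\star}$ along subsequences. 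I would employ part~(c): $\nabla \moreau{g}{\mu_{n}}(\bm{z}_{n})\in\partial g(\prox{\mu_{n}g}(\bm{z}_{n}))$ with $\norm{\nabla \moreau{g}{\mu_{n}}(\bm{z}_{n})}\leq L_{g}$, so every cluster point lies in $\overline{\dom{\partial g}}$. Second, for arbitrary $\bm{w}\in\overline{\dom{\partial g}}$ the optimality inequality defining $\prox{\mu_{n}g}(\bm{z}_{n})$ yields, after using strong convexity with modulus $\mu_{n}^{-1}-\eta$ and rearranging, a bound of the form $\norm{\prox{\mu_{n}g}(\bm{z}_{n})-\bm{z}_{n}}^{2}\leq\norm{\bm{w}-\bm{z}_{n}}^{2}+o(1)$; passing to the limit along a cluster subsequence gives $\norm{\bm{z}^{\star}-\widebar{\bm{z}}}\leq\norm{\bm{w}-\widebar{\bm{z}}}$, characterizing $\bm{z}^{\star}=P_{\overline{\dom{\partial g}}}(\widebar{\bm{z}})$. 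Because the limit is unique, the full sequence converges, completing (a).
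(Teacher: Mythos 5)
The paper does not actually prove Fact~\ref{fact:moreau}: it is stated as a fact and discharged entirely by citation ---~\cite[Lemma 2.3]{Atenas25} for \ref{enum:prox_convergence}, \cite[Corollary 3.4]{Hoheisel-Laborde-Oberman20} for \ref{enum:gradient_moreau}, \cite[Lemma 3.2, Lemma 3.3, Lemma 4.1]{Bohm-Wright21}, \cite[Proposition 3.4]{Planiden-Wang19}, and \cite[Theorem 1.25]{Rockafellar-Wets98} for the rest, plus a footnote extending the Planiden--Wang argument to weak convexity. So your self-contained reconstruction is by construction a different route, and for parts \ref{enum:gradient_moreau}--\ref{enum:moreau_bound} it succeeds. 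In particular, your plan for the sharp Lipschitz constant in \ref{enum:gradient_moreau} genuinely closes: writing $\tilde{g}\coloneqq g+\frac{\eta}{2}\norm{\cdot}^{2}$, one has $\prox{\mu g}(\widebar{\bm{z}})=\prox{\lambda\tilde{g}}\bigl(\frac{\widebar{\bm{z}}}{1-\eta\mu}\bigr)$ with $\lambda=\frac{\mu}{1-\eta\mu}$, and feeding the firm nonexpansiveness $\norm{\bm{v}}^{2}\leq\inprod{\bm{u}}{\bm{v}}$ of $\prox{\lambda\tilde{g}}$ into $\nabla\moreau{g}{\mu}=\frac{\mathrm{Id}-\prox{\mu g}}{\mu}$ gives, with $s\coloneqq(1-\eta\mu)^{-1}$, the bound $\norm{\bm{u}-s\bm{v}}^{2}\leq\norm{\bm{u}}^{2}+s(s-2)\norm{\bm{v}}^{2}$, which yields exactly $\mu^{-1}$ when $s\leq 2$ (i.e., $\mu\leq(2\eta)^{-1}$) and $\frac{\eta}{1-\eta\mu}$ when $s\geq 2$ --- precisely $\max\{\mu^{-1},\frac{\eta}{1-\eta\mu}\}$. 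Your \ref{enum:prox_subdifferential} and \ref{enum:moreau_bound} are also fine; note that the factor $\frac{1}{2}$ in~\eqref{eq:relation_weakly_Moreau} survives only because you keep the quadratic term from the closed-form equality in~\eqref{eq:Moreau}, as you correctly indicate, and that Fermat's rule in \ref{enum:prox_subdifferential} needs the smooth-plus-nonsmooth sum rule for the general subdifferential (the paper's~\eqref{eq:sum_smooth_subgradient}), not the convex one.

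The genuine gap is in \ref{enum:prox_convergence}. That part is stated for $g:\mathcal{Z}\to\exR$ proper, lower semicontinuous, $\eta$-weakly convex --- Lipschitz continuity (and hence $L_{g}$) is hypothesized only in \ref{enum:moreau_bound} --- yet your argument invokes $\norm{\nabla\moreau{g}{\mu_{n}}(\bm{z}_{n})}\leq L_{g}$ twice. If $g$ \emph{is} globally Lipschitz, then $\dom{\partial g}=\mathcal{Z}$ and $P_{\overline{\dom{\partial g}}}=\mathrm{Id}$, so your proof only covers the trivial instance of the claim; the nontrivial content (projection onto $\overline{\dom{\partial g}}$) arises exactly when $g$ is not globally Lipschitz, e.g.\ when $\dom{g}\subsetneq\mathcal{Z}$. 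This matters for the paper itself: in the proof of Theorem~\ref{theorem:stationary}, Fact~\ref{fact:moreau}~\ref{enum:prox_convergence} is applied to a finite-valued weakly convex $J$ that is only \emph{locally} Lipschitz, where your $L_{g}$-based bounds are unavailable. Your second, variational step is the right general mechanism, but as written it too is incomplete: strong convexity gives $\norm{\bm{p}_{n}-\bm{z}_{n}}^{2}+(1-\eta\mu_{n})\norm{\bm{w}-\bm{p}_{n}}^{2}\leq\norm{\bm{w}-\bm{z}_{n}}^{2}+2\mu_{n}\bigl(g(\bm{w})-g(\bm{p}_{n})\bigr)$ with $\bm{p}_{n}\coloneqq\prox{\mu_{n}g}(\bm{z}_{n})$, and your claimed $o(1)$ requires (i) taking $\bm{w}\in\dom{g}$ rather than $\bm{w}\in\overline{\dom{\partial g}}$, where $g(\bm{w})=+\infty$ is possible, extending to the closure only at the end by continuity of the distance, and (ii) showing $\mu_{n}g(\bm{p}_{n})\to 0$, which needs the quadratic minorant $g\geq\text{affine}-\frac{\eta}{2}\norm{\cdot}^{2}$ (from convexity of $\tilde{g}$) \emph{together with} boundedness of $(\bm{p}_{n})_{n=1}^{\infty}$ --- and that boundedness must itself be bootstrapped from the same inequality with a fixed $\bm{w}_{0}\in\dom{g}$ and $\eta\mu_{n}$ small, since it cannot come from $\norm{\bm{p}_{n}-\bm{z}_{n}}\leq\mu_{n}L_{g}$. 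Once these repairs are made, your cluster-point identification is sound: $\bm{p}_{n}\in\dom{\partial g}$ directly from the first-order optimality $\frac{\bm{z}_{n}-\bm{p}_{n}}{\mu_{n}}\in\partial g(\bm{p}_{n})$ (no Lipschitz needed), so any cluster point $\bm{z}^{\star}$ lies in $\overline{\dom{\partial g}}$ and satisfies $\norm{\bm{z}^{\star}-\widebar{\bm{z}}}\leq\norm{\bm{w}-\widebar{\bm{z}}}$ for all $\bm{w}\in\overline{\dom{\partial g}}$, forcing $\bm{z}^{\star}=P_{\overline{\dom{\partial g}}}(\widebar{\bm{z}})$ by uniqueness of the projection, and boundedness plus uniqueness of the cluster point gives convergence of the full sequence.
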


\section{Relation between First-order Optimality Conditions of Problems~\ref{problem:constrained} and~\ref{problem:origin}}
\label{sec:parametrization}
\label{sec:necessary}
This section presents a relation between the original Problem~\ref{problem:constrained} and the parameterized Problem~\ref{problem:origin}.
We begin with the notions of optimality.
\begin{definition}[Optimality]
  Let
  $J:\mathcal{X}\to \exR$
  be proper.
  \begin{enumerate}
    \item (Local minimizer)
          $\bm{x}^{\star\star} \in \dom{J}$
          is said to be a {\em local minimizer} of
          $J$
          if there exists an open neighborhood
          $\mathcal{N}_{\bm{x}^{\star\star}}\subset \mathcal{X}$
          of
          $\bm{x}^{\star\star}$
          such that
          $J(\bm{x}^{\star\star}) \leq J(\bm{x})\ (\bm{x} \in \mathcal{N}_{\bm{x}^{\star\star}})$.
    \item (Stationary point)
          $\bm{x}^{\star} \in \mathcal{X}$
          is said to be a {\em stationary point}\footnote{
            A stationary point
            $\bm{x}^{\star} \in \mathcal{X}$
            is also called a {\em limiting stationary point} of
            $J$
            in order to distinguish from a {\em Fr\'echet stationary point} where
            $\Fsubdiff J(\bm{x}^{\star}) \ni \bm{0}$
            holds.
            However, these stationarities are identical in a case where
            $J$
            is subdifferentially regular in Definition~\ref{definition:regular}~\ref{enum:definition:regular:function} (see~\cite[Cor. 8.11]{Rockafellar-Wets98}).
            Since all functions in this paper are subdifferentially regular, we do not distinguish these stationarities in this paper.
          }
          of
          $J$
          if
          $\Lsubdiff  J(\bm{x}^{\star}) \ni \bm{0}$.
  \end{enumerate}
\end{definition}

By Fermat's rule~\cite[Thm. 10.1]{Rockafellar-Wets98}, for a proper function
$J:\mathcal{X}\to\exR$,
$\bm{x}^{\star\star} \in \mathcal{X}$
is a stationary point of
$J$
if
$\bm{x}^{\star\star}$
is a local minimizer of
$J$.
Thus, the condition
$\Lsubdiff  J(\bm{x}^{\star}) \ni \bm{0}$
at
$\bm{x}^{\star} \in \mathcal{X}$
seems to be the most versatile expressions of {\em the first-order optimality condition} for minimization of
$J$
at
$\bm{x}^{\star}$
(see, e.g.,~\cite[p.207]{Rockafellar-Wets98}).
For this reason, we focus on finding such stationary points of Problem~\ref{problem:constrained} and Problem~\ref{problem:origin} throughout this paper.
By Lemma~\ref{lemma:subdifferentially}, the first-order optimality conditions can be expressed respectively with
$\cost = h+g\circ\mathfrak{S}$
as:
\begin{equation}
  \Lsubdiff  (\cost + \iota_{C})(\bm{x}^{\star}) \left(\overset{\eqref{eq:subdifferentially_f_C}}{=} \Lsubdiff  \cost(\bm{x}^{\star}) + N_{C}(\bm{x}^{\star})\right) \ni \bm{0} \label{eq:necessarily_nonsmooth}
\end{equation}
for
$\bm{x}^{\star}\in C$
to be a stationary point of Problem~\ref{problem:constrained}, and
\begin{align}
   & \Lsubdiff  (\cost \circ \param )(\bm{y}^{\star}) \left(\overset{\eqref{eq:chain2}}{=} (\mathrm{D}\param (\bm{y}^{\star}))^{*}[\Lsubdiff \cost (\param (\bm{y}^{\star}))]\right)  \ni \bm{0}, \label{eq:necessarily_origin} \\
   & {\rm or\ equivalently},\ d\left(\bm{0}, \Lsubdiff  (\cost \circ  \param )(\bm{y}^{\star})\right)  = 0 \label{eq:distance_stationary}
\end{align}
for
$\bm{y}^{\star} \in \mathcal{Y}$
to be a stationary point of Problem~\ref{problem:origin}.

Theorem~\ref{theorem:necessary} below presents a relation between~\eqref{eq:necessarily_nonsmooth} and~\eqref{eq:necessarily_origin}.
Based on Theorem~\ref{theorem:necessary}, the conditions~\eqref{eq:necessarily_nonsmooth} and~\eqref{eq:necessarily_origin} are equivalent with
$\bm{x}^{\star} = \param(\bm{y}^{\star})$
under a certain condition (see Corollary~\ref{corollary:optimality_C_F}).
In a special case where the cost function
$\cost$
is continuously differentiable,
a similar relation between the first-order optimality conditions is found in~\cite[Thm. 2.4]{Levin-Kileel-Boumal24}.
We note that, even if
$C$
is not an embedded submanifold of
$\mathcal{X}$
but is just a Clarke regular subset of
$\mathcal{X}$,
the same statements in Theorem~\ref{theorem:necessary} hold.

\begin{theorem}[Point-wise relation between the first-order optimality conditions~\eqref{eq:necessarily_nonsmooth} and~\eqref{eq:necessarily_origin}]\label{theorem:necessary}
  Consider Problems~\ref{problem:constrained} and~\ref{problem:origin}.
  Let
  $\bm{x}^{\star} \coloneqq  \param (\bm{y}^{\star}) \in C$
  with some
  $\bm{y}^{\star} \in \mathcal{Y}$.
  Then, the following hold:
  \begin{enumerate}[label=(\alph*)]
    \item \label{enum:necessary_x_to_y}
          If
          $\bm{x}^{\star}$
          satisfies~\eqref{eq:necessarily_nonsmooth}, then
          $\bm{y}^{\star}$
          satisfies~\eqref{eq:necessarily_origin}.
    \item \label{enum:necessary_C_F}
          Assume
          $N_{ C}(\bm{x}^{\star}) \supset  \mathrm{Ker}((\mathrm{D}\param (\bm{y}^{\star}))^{*})$.
          If
          $\bm{y}^{\star}$
          satisfies~\eqref{eq:necessarily_origin}, then
          $\bm{x}^{\star}$
          satisfies~\eqref{eq:necessarily_nonsmooth}.
  \end{enumerate}
\end{theorem}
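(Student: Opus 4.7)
The plan is to use Lemma~\ref{lemma:subdifferentially} to rewrite both optimality conditions in terms of the common ingredient $\partial(h+g\circ\mathfrak{S})(\bm{x}^{\star})$, and then to bridge the normal cone $N_{C}(\bm{x}^{\star})$ and the subspace $\mathrm{Ker}((\mathrm{D}F(\bm{y}^{\star}))^{*})$ via Fact~\ref{fact:normal_image} together with the Clarke regularity of $C$ (assumption~\ref{enum:Clarke_regular} of Problem~\ref{problem:constrained}). The key observation is that, since $C = F(\mathcal{Y})$ and $C$ is Clarke regular at $\bm{x}^{\star}\in C$, we have $N_{C}(\bm{x}^{\star}) = \widehat{N}_{C}(\bm{x}^{\star}) = \widehat{N}_{F(\mathcal{Y})}(\bm{x}^{\star})$, which by Fact~\ref{fact:normal_image} is contained in $\mathrm{Ker}((\mathrm{D}F(\bm{y}^{\star}))^{*})$ because $\bm{y}^{\star} \in F^{-1}(\bm{x}^{\star})$. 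So the inclusion $N_{C}(\bm{x}^{\star}) \subset \mathrm{Ker}((\mathrm{D}F(\bm{y}^{\star}))^{*})$ holds unconditionally under the assumptions of Problem~\ref{problem:constrained}, while the reverse inclusion is precisely the extra hypothesis of~\ref{enum:necessary_C_F}.

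For part~\ref{enum:necessary_x_to_y}, I would start from~\eqref{eq:necessarily_nonsmooth} rewritten via~\eqref{eq:subdifferentially_f_C} and decompose $\bm{0} = \bm{u} + \bm{v}$ with $\bm{u}\in\partial(h+g\circ\mathfrak{S})(\bm{x}^{\star})$ and $\bm{v}\in N_{C}(\bm{x}^{\star})$. Applying $(\mathrm{D}F(\bm{y}^{\star}))^{*}$ to both sides gives $(\mathrm{D}F(\bm{y}^{\star}))^{*}[\bm{u}] = -(\mathrm{D}F(\bm{y}^{\star}))^{*}[\bm{v}]$, and by the bridge above $\bm{v}\in N_{C}(\bm{x}^{\star})\subset\mathrm{Ker}((\mathrm{D}F(\bm{y}^{\star}))^{*})$, so the right-hand side vanishes. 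Combined with the chain rule~\eqref{eq:chain2}, this yields $\bm{0}\in(\mathrm{D}F(\bm{y}^{\star}))^{*}[\partial(h+g\circ\mathfrak{S})(\bm{x}^{\star})] = \partial((h+g\circ\mathfrak{S})\circ F)(\bm{y}^{\star})$, which is~\eqref{eq:necessarily_origin}.

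For part~\ref{enum:necessary_C_F}, I would again use~\eqref{eq:chain2} to unpack~\eqref{eq:necessarily_origin}: there exists $\bm{u}\in\partial(h+g\circ\mathfrak{S})(\bm{x}^{\star})$ with $(\mathrm{D}F(\bm{y}^{\star}))^{*}[\bm{u}] = \bm{0}$, i.e., $\bm{u}\in\mathrm{Ker}((\mathrm{D}F(\bm{y}^{\star}))^{*})$. By the hypothesis $N_{C}(\bm{x}^{\star})\supset\mathrm{Ker}((\mathrm{D}F(\bm{y}^{\star}))^{*})$, this gives $\bm{u}\in N_{C}(\bm{x}^{\star})$. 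Because $\mathrm{Ker}((\mathrm{D}F(\bm{y}^{\star}))^{*})$ is a linear subspace, $-\bm{u}$ also lies in $N_{C}(\bm{x}^{\star})$, so $\bm{0} = \bm{u} + (-\bm{u})\in\partial(h+g\circ\mathfrak{S})(\bm{x}^{\star}) + N_{C}(\bm{x}^{\star})$, which is~\eqref{eq:necessarily_nonsmooth} by~\eqref{eq:subdifferentially_f_C}.

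I do not anticipate a hard step: both directions reduce to the two ingredients above once Lemma~\ref{lemma:subdifferentially} is invoked. The only subtle point worth flagging in the write-up is that $N_{C}(\bm{x}^{\star})$ is merely a cone, so the symmetrization needed in part~\ref{enum:necessary_C_F} must be routed through the kernel subspace rather than through $N_{C}(\bm{x}^{\star})$ directly; this is where the subspace character of $\mathrm{Ker}((\mathrm{D}F(\bm{y}^{\star}))^{*})$, together with the inclusion hypothesis, is essential.
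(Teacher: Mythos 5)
Your proposal is correct and takes essentially the same route as the paper: the bridge $N_{C}(\bm{x}^{\star}) \subset \mathrm{Ker}((\mathrm{D}F(\bm{y}^{\star}))^{*})$ you derive from Fact~\ref{fact:normal_image} and Clarke regularity is exactly the paper's Lemma~\ref{lemma:normal_kernel}, and both directions then proceed as in the paper via Lemma~\ref{lemma:subdifferentially} and the linearity of the kernel subspace. The only cosmetic difference is that you use the surjectivity $F(\mathcal{Y})=C$ to get equality of the regular normal cones, whereas the paper's lemma needs only $F(\mathcal{Y})\subset C$ (which is why Remark~\ref{remark:not_surjective} can drop surjectivity); this does not affect the theorem as stated.
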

To prove Theorem~\ref{theorem:necessary}, we use the following lemma.
\begin{lemma}[] \label{lemma:normal_kernel}
  Consider Problem~\ref{problem:constrained}.
  Let
  $\param :\mathcal{Y} \to C$
  be continuously differentiable, and
  let
  $\widebar{\bm{x}} \coloneqq \param (\widebar{\bm{y}}) \in C$
  with some
  $\widebar{\bm{y}} \in \mathcal{Y}$.
  Then,
  $N_{C}(\widebar{\bm{x}}) \subset \mathrm{Ker}((\mathrm{D}\param (\widebar{\bm{y}}))^{*})$.
\end{lemma}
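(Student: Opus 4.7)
The proof plan is essentially a two-step reduction that chains together Clarke regularity of $C$ (guaranteed by Problem~\ref{problem:constrained}(i)) and Fact~\ref{fact:normal_image}. I would not expect significant obstacles here; the work is in recognizing that the existing tools match exactly.

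First, I would invoke the hypothesis that $C \subset \mathcal{X}$ is Clarke regular at $\widebar{\bm{x}} = F(\widebar{\bm{y}}) \in C$ (Problem~\ref{problem:constrained}(i)). By Definition~\ref{definition:regular}, this yields the equality $N_{C}(\widebar{\bm{x}}) = \widehat{N}_{C}(\widebar{\bm{x}})$. This is the crucial reduction, because Fact~\ref{fact:normal_image} is formulated only for the \emph{regular} normal cone $\widehat{N}$, not for the general normal cone $N$; without Clarke regularity, the stated inclusion need not hold.

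Next, since $C = F(\mathcal{Y})$ by Problem~\ref{problem:origin}, I would apply Fact~\ref{fact:normal_image} with the continuously differentiable parametrization $F:\mathcal{Y}\to \mathcal{X}$ at the point $\widebar{\bm{x}} \in F(\mathcal{Y})$ to obtain
\begin{equation*}
\widehat{N}_{C}(\widebar{\bm{x}}) = \widehat{N}_{F(\mathcal{Y})}(\widebar{\bm{x}}) \subset \bigcap_{\bm{y} \in F^{-1}(\widebar{\bm{x}})} \mathrm{Ker}\bigl((\mathrm{D}F(\bm{y}))^{*}\bigr).
\end{equation*}
Since $\widebar{\bm{y}} \in F^{-1}(\widebar{\bm{x}})$ by assumption, the intersection on the right is contained in the single term $\mathrm{Ker}((\mathrm{D}F(\widebar{\bm{y}}))^{*})$.

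Chaining the two inclusions together gives
\begin{equation*}
N_{C}(\widebar{\bm{x}}) = \widehat{N}_{C}(\widebar{\bm{x}}) \subset \mathrm{Ker}\bigl((\mathrm{D}F(\widebar{\bm{y}}))^{*}\bigr),
\end{equation*}
which is the desired conclusion. The only subtle point worth flagging in the write-up is that Clarke regularity is essential: it is what bridges the gap between the general normal cone $N_{C}$ that appears in the statement and the regular normal cone $\widehat{N}_{C}$ for which Rockafellar--Wets' image formula (Fact~\ref{fact:normal_image}) is available.
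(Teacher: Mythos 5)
Your argument follows the same skeleton as the paper's proof (Clarke regularity to pass from $N_{C}$ to $\widehat{N}_{C}$, then Fact~\ref{fact:normal_image} applied to the image set, then picking out the term indexed by $\widebar{\bm{y}} \in F^{-1}(\widebar{\bm{x}})$), but there is one genuine gap: you invoke $C = F(\mathcal{Y})$, citing Problem~\ref{problem:origin}. The lemma's hypotheses do not include surjectivity --- it assumes only that $F:\mathcal{Y}\to C$ is continuously differentiable, so in general $F(\mathcal{Y}) \subsetneq C$, and your middle equality $\widehat{N}_{C}(\widebar{\bm{x}}) = \widehat{N}_{F(\mathcal{Y})}(\widebar{\bm{x}})$ is unjustified. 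This generality is not an idle technicality here: Remark~\ref{remark:not_surjective} stresses that surjectivity of $F$ is never used in these proofs, precisely because the intended parametrization $\Phi_{\bm{S}}^{-1}$ in~\eqref{eq:Cayley} has image $\St(p,N)\setminus E_{N,p}(\bm{S})$, a proper (dense open) subset of $C=\St(p,N)$; Corollary~\ref{corollary:Cayley}~\ref{enum:Cayley_necessary} relies on the lemma holding in exactly that non-surjective setting, so a proof valid only under $F(\mathcal{Y})=C$ would not support the paper's application.

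The repair is small and is what the paper does: since $\widebar{\bm{x}} \in F(\mathcal{Y}) \subset C$, the regular normal cone is antitone under set inclusion at a common point, i.e., $\widehat{N}_{C}(\widebar{\bm{x}}) \subset \widehat{N}_{F(\mathcal{Y})}(\widebar{\bm{x}})$. This follows directly from the $\limsup$ definition~\eqref{eq:regular_normal}: the supremum of $\inprod{\bm{v}}{\bm{x}-\widebar{\bm{x}}}/\norm{\bm{x}-\widebar{\bm{x}}}$ over $\bm{x} \in F(\mathcal{Y})$ near $\widebar{\bm{x}}$ is bounded above by the corresponding supremum over $\bm{x} \in C$, so the defining inequality is easier to satisfy for the smaller set. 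With your equality replaced by this one-sided inclusion, the chain $N_{C}(\widebar{\bm{x}}) = \widehat{N}_{C}(\widebar{\bm{x}}) \subset \widehat{N}_{F(\mathcal{Y})}(\widebar{\bm{x}}) \subset \mathrm{Ker}\bigl((\mathrm{D}F(\widebar{\bm{y}}))^{*}\bigr)$ goes through verbatim and needs only $F(\mathcal{Y})\subset C$. Your identification of Clarke regularity as the essential bridge between $N_{C}$ and $\widehat{N}_{C}$ is correct and matches the paper's reasoning.
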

\begin{proof}
  Recall
  $\widehat{N}_{\param (\mathcal{Y})}(\widebar{\bm{x}}) \subset \mathrm{Ker}((\mathrm{D}\param (\widebar{\bm{y}}))^{*})$
  from Fact~\ref{fact:normal_image}.
  Moreover, we have
  $\widehat{N}_{\param (\mathcal{Y})}(\widebar{\bm{x}}) \supset \widehat{N}_{C}(\widebar{\bm{x}}) = N_{C}(\widebar{\bm{x}})$,
  where
  ``$\supset$''
  follows\footnote{
    We can check the inclusion by showing
    $\limsup_{\param (\mathcal{Y})\setminus\{\widebar{\bm{x}}\}\ni \bm{x}\to\widebar{\bm{x}}} \frac{\inprod{\bm{v}}{\bm{x} - \widebar{\bm{x}}}}{\norm{\bm{x}-\widebar{\bm{x}}}} \leq \limsup_{C\setminus\{\widebar{\bm{x}}\}\ni \bm{x}\to\widebar{\bm{x}}} \frac{\inprod{\bm{v}}{\bm{x} - \widebar{\bm{x}}}}{\norm{\bm{x}-\widebar{\bm{x}}}}$
    from~\eqref{eq:regular_normal}.
    The inequality can be verified with~\eqref{eq:limsup} from
    $\inf\limits_{\epsilon >0} \left(\sup\limits_{\substack{0 < \norm{\bm{x}-\widebar{\bm{x}}}< \epsilon \\ \bm{x} \in \param (\mathcal{Y})}}\frac{\inprod{\bm{v}}{\bm{x} - \widebar{\bm{x}}}}{\norm{\bm{x}-\widebar{\bm{x}}}}\right) \overset{\param (\mathcal{Y})\subset C}{\leq} \inf\limits_{\epsilon >0} \left(\sup\limits_{\substack{0 < \norm{\bm{x}-\widebar{\bm{x}}}< \epsilon \\ \bm{x} \in C}}\frac{\inprod{\bm{v}}{\bm{x} - \widebar{\bm{x}}}}{\norm{\bm{x}-\widebar{\bm{x}}}}\right)$.
  }
  by
  $C\supset \param (\mathcal{Y})$
  (Note:
  ``$\supset$''
  can be replaced by
  ``$=$''
  if
  $\param (\mathcal{Y}) = C$),
  and the  equality follows by the Clarke regularity of
  $C$ (see~\eqref{eq:manifold_tangent} in Example~\ref{ex:normal}).
  By combining these inclusions, we complete the proof.
\end{proof}

\begin{proof}[Proof of Theorem~\ref{theorem:necessary}]
  For simplicity, let
  $\cost\coloneqq h+g\circ \mathfrak{S}$.

  (a)
  Let
  $\bm{x}^{\star}$
  satisfy~\eqref{eq:necessarily_nonsmooth},
  i.e.,
  there exists
  $\bm{v} \in \Lsubdiff \cost(\bm{x}^{\star})$
  such that
  $-\bm{v} \in N_{ C}(\bm{x}^{\star})$.
  Then,
  Lemma~\ref{lemma:normal_kernel} implies
  $-\bm{v} \in N_{ C}(\bm{x}^{\star}) \subset \mathrm{Ker}((\mathrm{D}\param (\bm{y}^{\star}))^{*})$.
  By combining this inclusion with the linearity of
  $\mathrm{Ker}((\mathrm{D}\param (\bm{y}^{\star}))^{*})$,
  we have
  $\bm{v} \in  \mathrm{Ker}((\mathrm{D}\param (\bm{y}^{\star}))^{*})$.
  Thus,
  $(\mathrm{D}\param (\bm{y}^{\star}))^{*}\left[\Lsubdiff   \cost \left(\param (\bm{y}^{\star})\right)\right] \ni \bm{0}$,
  i.e.,
  $\bm{y}^{\star}$
  satisfies~\eqref{eq:necessarily_origin}.

  (b)
  Let
  $\bm{y}^{\star} \in \mathcal{Y}$
  satisfy~\eqref{eq:necessarily_origin},
  i.e., there exists
  $\bm{v} \in \Lsubdiff \cost (\param (\bm{y}^{\star}))$
  such that
  $(\mathrm{D}\param (\bm{y}^{\star}))^{*}[\bm{v}] = \bm{0}$.
  The linearity of
  $\mathrm{Ker}((\mathrm{D}\param (\bm{y}^{\star}))^{*})$
  implies
  $-\bm{v} \in \mathrm{Ker}((\mathrm{D}\param (\bm{y}^{\star}))^{*})$,
  and then
  the assumption
  $N_{ C}(\bm{x}^{\star}) \supset  \mathrm{Ker}((\mathrm{D}\param (\bm{y}^{\star}))^{*})$
  yields
  $-\bm{v} \in N_{C}(\param (\bm{y}^{\star}))$.
  Thus, we have
  $\bm{0} \in \Lsubdiff \cost (\param (\bm{y}^{\star})) + N_{C}(\param (\bm{y}^{\star})) = \Lsubdiff \cost (\bm{x}^{\star}) + N_{C}(\bm{x}^{\star})$,
  i.e.,
  $\bm{x}^{\star}$
  satisfies~\eqref{eq:necessarily_nonsmooth}.
\end{proof}

\begin{corollary}[Special case for~\eqref{eq:necessarily_nonsmooth} and~\eqref{eq:necessarily_origin} to be equivalent] \label{corollary:optimality_C_F}
  Consider Problems~\ref{problem:constrained} and~\ref{problem:origin}.
  Let
  $\bm{x}^{\star} \coloneqq \param (\bm{y}^{\star}) \in C$
  with some
  $\bm{y}^{\star} \in \mathcal{Y}$.
  Suppose that
  $\param $
  is submersion\footnote{ \label{foot:submersion}
  $\param$
  is submersion at
  $\bm{y}^{\star} \in \mathcal{Y}$
  if
  $\param :\mathcal{Y}\to C$
  is a local diffeomorphism at
  $\bm{y}^{\star}$
  (see~\cite[Prop. 4.8]{Lee12}), i.e., there exists an open neighborhood
  $\mathcal{N}_{\bm{y}^{\star}} \subset \mathcal{Y}$
  of
  $\bm{y}^{\star}$
  such that the restriction
  $\param |_{\mathcal{N}_{\bm{y}^{\star}}}:\mathcal{N}_{\bm{y}^{\star}} \to \param (\mathcal{N}_{\bm{y}^{\star}}) (\subset C)$
  of
  $\param $
  is bijective and differentiable, and its inversion
  $\param _{\mathcal{N}_{\bm{y}^{\star}}}^{-1}:\param (\mathcal{N}_{\bm{y}^{\star}})\to \mathcal{N}_{\bm{y}^{\star}}$
  is also differentiable.
  }at
  $\bm{y}^{\star}$,
  i.e.,
  $\mathrm{Ran}(\mathrm{D}\param (\bm{y}^{\star}))=T_{C}(\bm{x}^{\star})$
  holds (see~\eqref{eq:manifold_tangent} for $T_{C}$).
  Then,
  $\bm{x}^{\star}$
  satisfies~\eqref{eq:necessarily_nonsmooth} if and only if
  $\bm{y}^{\star}$
  satisfies~\eqref{eq:necessarily_origin}.
\end{corollary}
\begin{proof}
  From Theorem~\ref{theorem:necessary}, it suffices to show
  $N_{C}(\bm{x}^{\star}) \supset  \mathrm{Ker}((\mathrm{D}\param (\bm{y}^{\star}))^{*})$.
  This inclusion can be verified by
  $N_{C}(\bm{x}^{\star}) \overset{\eqref{eq:manifold_tangent}}{=} (T_{C}(\bm{x}^{\star}))^{\perp} = (\mathrm{Ran}(\mathrm{D}\param (\bm{y}^{\star})))^{\perp} = \mathrm{Ker}((\mathrm{D}\param (\bm{y}^{\star}))^{*})$,
  where the second equality follows by the assumption
  $\mathrm{Ran}(\mathrm{D}\param (\bm{y}^{\star}))=T_{C}(\bm{x}^{\star})$,
  and the last one follows by\footnote{
  $\bm{u} \in (\mathrm{Ran}(\mathrm{D}\param (\bm{y}^{\star})))^{\perp}
    \Leftrightarrow (\forall \bm{v} \in \mathcal{Y}) \ \inprod{\bm{u}}{\mathrm{D}\param (\bm{y}^{\star})[\bm{v}]} = 0
    \Leftrightarrow (\forall \bm{v} \in \mathcal{Y}) \ \inprod{(\mathrm{D}\param (\bm{y}^{\star}))^{*}[\bm{u}]}{\bm{v}} = 0
    \Leftrightarrow
    (\mathrm{D}\param (\bm{y}^{\star}))^{*}[\bm{u}] = \bm{0}
    \Leftrightarrow
    \bm{u} \in \mathrm{Ker}((\mathrm{D}\param (\bm{y}^{\star}))^{*})$.
    }
  $(\mathrm{Ran}(\mathrm{D}\param (\bm{y}^{\star})))^{\perp} = \mathrm{Ker}((\mathrm{D}\param (\bm{y}^{\star}))^{*})$.
\end{proof}

\begin{remark}[Case for $\param (\mathcal{Y})\subsetneq C$] \label{remark:not_surjective}
  Even in a case where
  $\param :\mathcal{Y}\to C$
  is not surjective onto
  $C$,
  the same conclusions as in Theorem~\ref{theorem:necessary}~\ref{enum:necessary_x_to_y} and~\ref{enum:necessary_C_F}, and in Corollary~\ref{corollary:optimality_C_F} can be achieved because the surjectivity of
  $\param $
  is not used in their proofs at all.
\end{remark}
\begin{remark}[On submersion condition] \label{remark:submersion}
  A parametrization
  $\param$
  of
  $C$
  in Problem~\ref{problem:constrained}
  does not automatically satisfy
  the submersion condition
  $\mathrm{Ran}(\mathrm{D}\param (\bm{y}))=T_{C}(\param(\bm{y}))$
  at each point
  $\bm{y} \in \mathcal{Y}$
  (see, e.g.,~\cite{Kume-Yamada21} and footnote\footref{foot:Klein} in Appendix~\ref{appendix:related_work:Lagrangian}, for a parametrization with the submersion condition).
  However, even in the absence of the submersion condition\footnote{
  For a manifold
  $C$
  in Problem~\ref{problem:constrained},
  there exists a parametrization satisfying the submersion condition locally.
  Indeed, for
  $\bm{x} \in C$,
  the exponential map
  $\mathrm{Exp}_{\bm{x}}:T_{C}(\bm{x})\to C$
  (see Example~\ref{example:parametrization}~\ref{enum:ex:parameterization:genearal}) is a local diffeomorphism at
  $\bm{0}\in T_{C}(\bm{x})$~\cite[Lemma 6.16]{Lee18},
  hence submersion over an open neighborhood of
  $\bm{0}$.
  By using a family of parametrizations satisfying the submersion condition locally, an adaptive parametrization strategy has also been proposed~\cite{Lezcano19,Criscitiello-Boumal22,Kume-Yamada24A} for the smooth case of the cost function.
  } of
  $\param$,
  Theorem~\ref{theorem:necessary}~\ref{enum:necessary_x_to_y} ensures that
  the first-order optimality condition~\eqref{eq:necessarily_origin} of Problem~\ref{problem:origin} at
  $\bm{y}^{\star} \in \mathcal{Y}$
  is a necessary condition for the first-order optimality condition~\eqref{eq:necessarily_nonsmooth} of Problem~\ref{problem:constrained} at
  $\bm{x}^{\star} \coloneqq \param(\bm{y}^{\star}) \in C$,
  hence is a necessary condition for the local optimality of Problem~\ref{problem:constrained} at
  $\bm{x}^{\star}$.
\end{remark}

Theorem~\ref{theorem:necessary} and Corollary~\ref{corollary:optimality_C_F} encourage us to focus on finding a stationary point of Problem~\ref{problem:origin} for our goal of finding a stationary point of Problem~\ref{problem:constrained}.
Since Problem~\ref{problem:origin} is an unconstrained optimization problem thanks to the parametrization
$\param $,
Problem~\ref{problem:origin} seems to be more tractable than Problem~\ref{problem:constrained} in terms of designing iterative algorithms.
Indeed, iterative algorithms with parametrization have been proposed in a case where the cost function
$\cost=h+g\circ\mathfrak{S}$
is smooth~\cite{Yamada-Ezaki03,Fraikin-Huper-Dooren07,Hori-Tanaka10,Helfrich-Willmott-Ye18,Lezcano19,Ha-Liu-Barber20,Kume-Yamada21,Kume-Yamada22,Criscitiello-Boumal22,Kume-Yamada24A}.
In Section~\ref{sec:algorithm}, along such parametrization strategies, we propose an iterative algorithm applicable to Problem~\ref{problem:origin} even with a nonsmooth cost function
$\cost$.

We remark that, as an alternative strategy to the parametrization strategy for translating Problem~\ref{problem:constrained} into an unconstrained problem, penalty-based strategies
have been proposed, e.g.,~\cite[Ch. 12]{Nocedal-Wright06},~\cite{Xiao-Liu-Toh24},
in a case where
$C$
is defined as equality constraints.
See Appendix~\ref{appendix:related_work:Lagrangian} for a further discussion of the penalty-based and parametrization strategies.

\section{Variable Smoothing Algorithm for Problem~\ref{problem:origin}}\label{sec:algorithm}
\subsection{Properties of smoothed surrogate function for Problem~\ref{problem:origin}}  \label{sec:smoothing}
In this section, we investigate properties of our smoothed surrogate function
$(h+\moreau{g}{\mu}\circ \mathfrak{S}) \circ \param\ (\mu\in(0,\eta^{-1}))$
of the cost function
$(h+g\circ\mathfrak{S})\circ\param$
in Problem~\ref{problem:origin}, where
$\moreau{g}{\mu}$
is the Moreau envelope of
$g$
in~\eqref{eq:Moreau}.
The goal of this section is to present an asymptotic property, called {\em gradient (sub-)consistency}, of the gradient of
$(h+\moreau{g}{\mu}\circ \mathfrak{S}) \circ \param\ (\mu \searrow 0)$
to approximate subgradients of
$(h+g\circ\mathfrak{S})\circ\param$
(see Theorem~\ref{theorem:stationary} and Remark~\ref{remark:gradient_consistency}).
By exploiting the gradient (sub-)consistency of
$(h+\moreau{g}{\mu}\circ \mathfrak{S}) \circ \param$,
we will establish an asymptotic convergence analysis for the proposed variable smoothing algorithm in Section~\ref{sec:proposed}.
We summarize basic properties of
$(h+\moreau{g}{\mu}\circ \mathfrak{S}) \circ \param$,
with
$\mu \in (0,\eta^{-1})$
fixed,
which follow from direct applications of Fact~\ref{fact:moreau}~\ref{enum:moreau_bound}.

\begin{lemma} \label{lemma:basic}
  Let
  $\cost^{\langle \mu \rangle} \circ \param \coloneqq (h+\moreau{g}{\mu}\circ\mathfrak{S})\circ \param\ (\mu\in(0,\eta^{-1}))$
  be a smoothed surrogate function of
  $\cost \circ \param = (h+g\circ\mathfrak{S})\circ\param$
  in Problem~\ref{problem:origin}.
  Then, for
  $\widebar{\bm{y}} \in \mathcal{Y}$
  and
  $\mu \in (0,\eta^{-1})$,
  we have the following expressions by~\eqref{eq:Moreau} and
  Fact~\ref{fact:moreau}~\ref{enum:gradient_moreau}:
  {
  \begin{equation}
    \thickmuskip=0.5\thickmuskip
    \medmuskip=0.5\medmuskip
    \thinmuskip=0.5\thinmuskip
    \arraycolsep=0.5\arraycolsep
    \begin{cases}
      \cost^{\langle \mu \rangle} \circ \param(\widebar{\bm{y}})
      = h(\param(\widebar{\bm{y}}))+g\left(\prox{\mu g}\left(\mathfrak{S}(\param(\widebar{\bm{y}}))\right)\right) + \frac{1}{2\mu}\norm{\prox{\mu g}\left(\mathfrak{S}\left(\param(\widebar{\bm{y}})\right)\right) - \mathfrak{S}\left(\param(\widebar{\bm{y}})\right)}^{2}, \\
      \nabla (\cost^{\langle \mu \rangle}\circ \param)(\widebar{\bm{y}})
      = (\mathrm{D}\param(\bm{y}))^{*}\left[
        \nabla h(\param(\widebar{\bm{y}})) + \left(\mathrm{D}\mathfrak{S}\left(\param(\widebar{\bm{y}})\right)\right)^{*}\left[\frac{\mathfrak{S}(\param(\widebar{\bm{y}})) - \prox{\mu g}\left(\mathfrak{S}(\param(\widebar{\bm{y}}))\right)}{\mu}\right]
        \right].
    \end{cases}
    \label{eq:expression_cost_gradient}
  \end{equation}}%
  With a Lipschitz constant
  $L_{g}> 0$
  of
  $g$,
  we have
  \begin{align}
    (\widebar{\bm{y}}\in\mathcal{Y}, \mu\in (0,\eta^{-1})) \quad
     &
    \cost^{\langle \mu \rangle}\circ\param(\widebar{\bm{y}}) \leq \cost\circ\param(\widebar{\bm{y}}) \leq \cost^{\langle \mu \rangle} \circ\param (\widebar{\bm{y}}) + \frac{1}{2}\mu L_{g}^{2}, \label{eq:smoothed_uniformly} \\
    (\mu\in (0,\eta^{-1})) \quad
     &
    -\infty <
    \inf_{\bm{y}\in \mathcal{Y}} \cost^{\langle \mu \rangle}\circ \param(\bm{y}). \label{eq:smoothed_bounded_below}
  \end{align}
  Moreover, with
  $0 < \mu_{2} < \mu_{1} < \eta^{-1}$,
  we have
  \begin{equation}
    (\widebar{\bm{y}}\in\mathcal{Y}) \quad
    \cost^{\langle \mu_{1} \rangle}\circ \param(\widebar{\bm{y}})
    \leq \cost^{\langle \mu_{2} \rangle}\circ \param(\widebar{\bm{y}})
    \leq \cost^{\langle \mu_{1} \rangle}\circ \param(\widebar{\bm{y}}) + L_{g}^{2}(\mu_{1}-\mu_{2}). \label{eq:smoothed_uniformly_different_parameter}
  \end{equation}
\end{lemma}
\begin{proof}
  The expressions~\eqref{eq:expression_cost_gradient} follow from~\eqref{eq:Moreau} and Fact~\ref{fact:moreau}~\ref{enum:gradient_moreau}.
  The inequality~\eqref{eq:smoothed_uniformly} follows from~\eqref{eq:relation_weakly_Moreau}.
  The inequality~\eqref{eq:smoothed_bounded_below} is verified by~\eqref{eq:smoothed_uniformly} and
  $\inf_{\bm{y}\in\mathcal{Y}}\cost \circ \param(\bm{y}) > -\infty$
  from the condition~\ref{enum:cost_bounded} in Problem~\ref{problem:constrained}.
  To derive~\eqref{eq:smoothed_uniformly_different_parameter}, it suffices to show
  \begin{equation}
    ( 0 < \mu_{2} < \mu_{1} < \eta^{-1},\
    \widebar{\bm{z}}\in\mathcal{Z}) \quad \moreau{g}{\mu_{1}}(\widebar{\bm{z}}) \leq \moreau{g}{\mu_{2}}(\widebar{\bm{z}}) \leq \moreau{g}{\mu_{1}}(\widebar{\bm{z}}) + L_{g}^{2}(\mu_{1}-\mu_{2}).\label{eq:moreau_inequality_Lipschitz}
  \end{equation}
  For the case
  $\frac{\mu_{1}}{\mu_2} \leq 2$,
  the inequality~\eqref{eq:moreau_inequality_Lipschitz} follows from~\eqref{eq:moreau_inequality}.
  Now suppose
  $\frac{\mu_{1}}{\mu_{2}} > 2$,
  and
  define
  $\widetilde{\mu}_{t} \coloneqq 2^{-t}\mu_{1}\ (t=0,1,2,\ldots,l-1)$
  and
  $\widetilde{\mu}_{l} \coloneqq \mu_{2} (\geq 2^{-l}\mu_{1})$
  with
  $l\coloneqq \lceil \log_{2}(\frac{\mu_{1}}{\mu_{2}}) \rceil \geq 2$,
  where
  $\lceil \cdot \rceil$
  is the ceiling function.
  Then, each pair
  $(\widetilde{\mu}_{t},\widetilde{\mu}_{t+1})$
  satisfies
  $\frac{\widetilde{\mu}_{t}}{\widetilde{\mu}_{t+1}} \leq 2$,
  and thus the inequality~\eqref{eq:moreau_inequality_Lipschitz} with
  $(\mu_{1},\mu_{2}) \coloneqq (\widetilde{\mu}_{t},\widetilde{\mu}_{t+1})$
  (already proved for the case
  $\frac{\mu_{1}}{\mu_2} \leq 2$)
  yields
  $0\leq \moreau{g}{\widetilde{\mu}_{t+1}}(\widebar{\bm{z}}) - \moreau{g}{\widetilde{\mu}_{t}}(\widebar{\bm{z}}) \leq L_{g}^{2}(\widetilde{\mu}_{t}-\widetilde{\mu}_{t+1})\ (\widebar{\bm{z}} \in \mathcal{Z})$.
  Hence, we get
  $0\leq \moreau{g}{\mu_{2}}(\widebar{\bm{z}}) - \moreau{g}{\mu_{1}}(\widebar{\bm{z}}) = \sum_{t=0}^{l-1}(\moreau{g}{\widetilde{\mu}_{t+1}}(\widebar{\bm{z}}) - \moreau{g}{\widetilde{\mu}_{t}}(\widebar{\bm{z}})) \leq L_{g}^{2}\sum_{t=0}^{l-1}(\widetilde{\mu}_{t}-\widetilde{\mu}_{t+1}) = L_{g}^{2}(\mu_{1}-\mu_{2})$,
  implying the inequality~\eqref{eq:moreau_inequality_Lipschitz} also follows for the case
  $\frac{\mu_{1}}{\mu_{2}} > 2$.
\end{proof}

The Lipschitz continuity of the gradient of a cost function is one of important ingredients for optimization to ensure the descent lemma (see, e.g.,~\cite[Lemma 5.7]{Beck17}).
The following presents a sufficient condition for the Lipschitz constant of the gradient of
$(h+\moreau{g}{\mu}\circ\mathfrak{S})\circ\param$ (see also Remark~\ref{remark:assumption} for its further sufficient conditions).
\begin{proposition}[Lipschitzian of $\nabla ((h+\moreau{g}{\mu}\circ\mathfrak{S})\circ\param)$]\label{proposition:extension:Lipschitz_f}
  Consider a smoothed surrogate function
  $\cost^{\langle \mu \rangle} \circ \param \coloneqq (h+\moreau{g}{\mu}\circ\mathfrak{S})\circ\param\ (\mu\in(0,\eta^{-1}))$
  of
  $\cost \circ \param = (h+g\circ\mathfrak{S})\circ\param$
  in Problem~\ref{problem:origin}, and the following conditions:
  \begin{enumerate}[label={\rm (A\arabic*)}]
    \item \label{enum:bounded_DG}
          The operator norm of
          $(\mathrm{D}\mathfrak{S}(\cdot))^{*}$
          is bounded above by
          $\kappa_{\mathfrak{S}} > 0$
          over the convex hull
          $\mathrm{Conv}(C)\coloneqq \{t\bm{x}_{1} + (1-t)\bm{x}_{2} \in \mathcal{X} \mid \bm{x}_{1},\bm{x}_{2} \in C, t\in [0,1]\} \subset \mathcal{X}$
          of
          $C$,
          i.e.,
          \begin{equation}
            (\exists \kappa_{\mathfrak{S}} > 0, \forall \bm{x} \in\mathrm{Conv}(C)) \quad
            \norm{(\mathrm{D}\mathfrak{S}(\bm{x}))^{*}}_{\mathrm{op}} \leq \kappa_{\mathfrak{S}};
          \end{equation}
    \item \label{enum:Lipschitz_DG}
          $(\mathrm{D}\mathfrak{S}(\cdot))^{*}$
          is Lipschitz continuous with a Lipschitz constant
          $L_{\mathrm{D}\mathfrak{S}} > 0$
          over
          $C$,
          i.e.,
          \begin{equation}
            (\exists L_{\mathrm{D}\mathfrak{S}}>0, \forall \bm{x}_{1},\bm{x}_{2} \in C) \ \norm{(\mathrm{D}\mathfrak{S}(\bm{x}_{1}))^{*}-(\mathrm{D}\mathfrak{S}(\bm{x}_{2}))^{*}}_{\mathrm{op}} \leq L_{\mathrm{D}\mathfrak{S}}\norm{\bm{x}_{1} - \bm{x}_{2}}; \label{eq:DG_Lipschitz}
          \end{equation}
    \item \label{enum:bounded_gradh}
          The norm of
          $\nabla h$
          is bounded above by
          $\kappa_{h}> 0$
          over
          $C$,
          i.e.,
          \begin{equation}
            (\exists \kappa_{h}>0, \forall\bm{x} \in C)\ \norm{\nabla h(\bm{x})} \leq \kappa_{h};
          \end{equation}
    \item \label{enum:bounded_DF}
          The operator norm of
          $(\mathrm{D}\param (\cdot))^{*}$
          is bounded above by
          $\kappa_{\param } > 0$
          over
          $\mathcal{Y}$;
    \item \label{enum:Lipschitz_DF}
          $(\mathrm{D}\param (\cdot))^{*}$
          is Lipschitz continuous with a Lipschitz constant
          $L_{\mathrm{D}\param } > 0$
          over
          $\mathcal{Y}$.
  \end{enumerate}
  {
  Then, for
  \thickmuskip=0.2\thickmuskip
  \medmuskip=0.2\medmuskip
  \thinmuskip=0.2\thinmuskip
  \arraycolsep=0.2\arraycolsep
  $\mu \in  (0,2^{-1}\eta^{-1}]$,
  $\cost^{\langle \mu \rangle} \circ \param$
  is continuously differentiable, and
  the following hold:
  }
  \begin{enumerate}[label=(\alph*)]
    \item \label{enum:Lipschitz_g_G}
          If the conditions~\ref{enum:bounded_DG} and~\ref{enum:Lipschitz_DG} hold, then
          $\nabla \cost^{\langle \mu \rangle}$
          is Lipschitz continuous with a Lipschitz constant
          $L_{\nabla \cost^{\langle \mu \rangle}}\coloneqq  L_{\nabla h} + L_{g}L_{\mathrm{D}\mathfrak{S}}  +  \kappa_{\mathfrak{S}}^{2}\mu^{-1} > 0$
          over
          $C$.
    \item \label{enum:Lipschitz}
          If the conditions~\ref{enum:bounded_DG}-\ref{enum:Lipschitz_DF} hold,
          then
          $\nabla (\cost^{\langle \mu \rangle}\circ \param )$
          is Lipschitz continuous with a Lipschitz constant
          $L_{\nabla (\cost^{\langle \mu \rangle}\circ \param )}\coloneqq  \varpi_{1} + \varpi_{2}\mu^{-1} > 0$
          over
          $\mathcal{Y}$,
          where constants
          $\varpi_{1} \coloneqq  \kappa_{\param }^{2}(L_{\nabla h} + L_{g}L_{\mathrm{D}\mathfrak{S}}) +  L_{\mathrm{D}\param }(\kappa_{h}+ \kappa_{\mathfrak{S}}L_{g})$
          and
          $\varpi_{2} \coloneqq  \kappa_{\param }^{2}\kappa_{\mathfrak{S}}^{2}$
          do not depend on
          $\mu$.
  \end{enumerate}
\end{proposition}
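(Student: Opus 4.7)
The plan is to reduce both parts to the single product-rule estimate
\begin{equation*}
\norm{A_1 b_1 - A_2 b_2} \leq \norm{A_1}_{\mathrm{op}}\norm{b_1 - b_2} + \norm{A_1 - A_2}_{\mathrm{op}}\norm{b_2},
\end{equation*}
combined with the chain-rule expression $\nabla(J\circ \mathcal{F})(\bm{y}) = (\mathrm{D}\mathcal{F}(\bm{y}))^{*}[\nabla J(\mathcal{F}(\bm{y}))]$ recalled in the Notation section. A preliminary observation is that for $\mu \in (0,(2\eta)^{-1}]$ one has $\eta\mu \leq 1/2$ and hence $\max\{\mu^{-1}, \eta/(1-\eta\mu)\} \leq \max\{\mu^{-1}, 2\eta\} = \mu^{-1}$, so Fact~\ref{fact:moreau}\ref{enum:gradient_moreau} yields the sharpened Lipschitz constant $\mu^{-1}$ for $\nabla \moreau{g}{\mu}$ on $\mathcal{Z}$; this is the only place where the restricted range of $\mu$ plays a role.

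For part~\ref{enum:Lipschitz_g_G}, I would first deduce from~\ref{enum:bounded_DG} via the mean value theorem applied along line segments in the convex set $\mathrm{Conv}(C) \supset C$ that $\mathfrak{S}$ is $\kappa_{\mathfrak{S}}$-Lipschitz continuous on $C$. Applying the product-rule estimate to $\nabla(\moreau{g}{\mu}\circ \mathfrak{S})(\bm{x}) = (\mathrm{D}\mathfrak{S}(\bm{x}))^{*}[\nabla \moreau{g}{\mu}(\mathfrak{S}(\bm{x}))]$, the first term is bounded by $\kappa_{\mathfrak{S}}\cdot \mu^{-1}\cdot \kappa_{\mathfrak{S}}\norm{\bm{x}_1 - \bm{x}_2}$ using~\ref{enum:bounded_DG}, the sharpened Moreau bound above, and the Lipschitz continuity of $\mathfrak{S}$, while the second term is bounded by $L_g L_{\mathrm{D}\mathfrak{S}}\norm{\bm{x}_1 - \bm{x}_2}$ using~\ref{enum:Lipschitz_DG} together with the uniform bound $\norm{\nabla \moreau{g}{\mu}(\cdot)} \leq L_g$ from Fact~\ref{fact:moreau}\ref{enum:moreau_bound}. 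Adding the $L_{\nabla h}\norm{\bm{x}_1 - \bm{x}_2}$ contribution from $\nabla h$ gives exactly $L_{\nabla h} + L_g L_{\mathrm{D}\mathfrak{S}} + \kappa_{\mathfrak{S}}^{2}\mu^{-1}$.

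For part~\ref{enum:Lipschitz} the same mechanism is applied one level up. Setting $p \coloneqq h + \moreau{g}{\mu}\circ \mathfrak{S}$, the identity $F(\mathcal{Y}) = C$ lets me invoke part~\ref{enum:Lipschitz_g_G} at $F(\bm{y}_1),F(\bm{y}_2) \in C$, while~\ref{enum:bounded_DF} with the mean value theorem on the convex set $\mathcal{Y}$ yields $\kappa_F$-Lipschitz continuity of $F$. Applying the product-rule estimate to $\nabla(p\circ F)(\bm{y}) = (\mathrm{D}F(\bm{y}))^{*}[\nabla p(F(\bm{y}))]$, the first term becomes $\kappa_F\cdot L_{\nabla p}\cdot \kappa_F\norm{\bm{y}_1 - \bm{y}_2}$ and the second term is controlled via~\ref{enum:Lipschitz_DF} by $L_{\mathrm{D}F}\norm{\bm{y}_1 - \bm{y}_2}\cdot \norm{\nabla p(F(\bm{y}_2))}$. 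The remaining norm is in turn bounded by $\kappa_h + \kappa_{\mathfrak{S}}L_g$ using~\ref{enum:bounded_gradh},~\ref{enum:bounded_DG}, and the gradient bound from Fact~\ref{fact:moreau}\ref{enum:moreau_bound}. Collecting the $\mu$-independent contributions into $\varpi_1$ and the lone $\mu^{-1}$ coefficient into $\varpi_2$ recovers precisely the asserted constant.

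The argument is essentially mechanical: I do not expect a real obstacle beyond the mild but crucial sharpening of the Moreau gradient Lipschitz constant highlighted in the first paragraph, with the rest being careful bookkeeping of constants and two invocations of the mean value theorem on the convex domains $\mathrm{Conv}(C)$ and $\mathcal{Y}$.
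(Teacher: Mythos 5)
Your proposal is correct and takes essentially the same approach as the paper: the paper packages your product-rule estimate $\norm{A_1 b_1 - A_2 b_2} \leq \norm{A_1}_{\mathrm{op}}\norm{b_1-b_2} + \norm{A_1-A_2}_{\mathrm{op}}\norm{b_2}$ as a standalone lemma (Lemma~\ref{lemma:Lipschitz_composite}) and applies it twice with exactly your choices of data, the same sharpening $\max\{\mu^{-1},\eta/(1-\eta\mu)\}=\mu^{-1}$ for $\mu\in(0,(2\eta)^{-1}]$, and the same bookkeeping of constants, including the bound $\kappa_h+\kappa_{\mathfrak{S}}L_g$ on $\norm{\nabla(h+\moreau{g}{\mu}\circ\mathfrak{S})}$ over $C=F(\mathcal{Y})$. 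The only cosmetic difference is that where you invoke the mean value inequality on $\mathrm{Conv}(C)$ and $\mathcal{Y}$, the paper cites~\cite[Theorem 9.2]{Rockafellar-Wets98} for the same passage from a derivative bound to Lipschitz continuity.
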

\begin{proof}
  See Appendix~\ref{appendix:extension:Lipschitz_f}.
\end{proof}

\begin{remark}[Sufficient conditions for assumptions in Proposition~\ref{proposition:extension:Lipschitz_f}] \label{remark:assumption}
  \
  \begin{enumerate}[label=(\alph*)]
    \item (Sufficient conditions regarding $\mathfrak{S}$) \label{enum:S}
          In a case where
          $\mathfrak{S}$
          is a linear operator,
          the conditions~\ref{enum:bounded_DG} and~\ref{enum:Lipschitz_DG} in Proposition~\ref{proposition:extension:Lipschitz_f} are automatically satisfied.
          In another case where
          $C$
          is compact and
          $\mathfrak{S}$
          is twice continuously differentiable, the conditions~\ref{enum:bounded_DG},~\ref{enum:Lipschitz_DG} and~\ref{enum:bounded_gradh} are automatically satisfied.
    \item (Sufficient conditions regarding $\param $)
          The conditions~\ref{enum:bounded_DF} and~\ref{enum:Lipschitz_DF}
          are automatically satisfied in a case where
          $C$ is a linear subspace of
          $\mathcal{X}$
          and
          $\param $
          is a linear operator.
          Even for the other cases,
          we will present an example of
          $(C,\param )$,
          in Corollary~\ref{corollary:Cayley}
          (see also Lemma~\ref{lemma:Cayley_Lipschitz}),
          achieving the conditions~\ref{enum:bounded_DF} and~\ref{enum:Lipschitz_DF}.
  \end{enumerate}
\end{remark}

From~\eqref{eq:distance_stationary} and Theorem~\ref{theorem:necessary}, we aim to find a stationary point
$\bm{y}^{\star} \in \mathcal{Y}$
of Problem~\ref{problem:origin}
such that
$d\left(\bm{0}, \Lsubdiff  (\cost\circ  \param )(\bm{y}^{\star})\right)=0$
with
$\cost = h+g\circ\mathfrak{S}$.
Theorem~\ref{theorem:stationary} below
presents an upper bound of this distance with the gradient of
$(h+\moreau{g}{\mu}\circ \mathfrak{S})\circ \param$
in~\eqref{eq:necessary_bound} by showing the gradient sub-consistency of $(h+\moreau{g}{\mu}\circ \mathfrak{S})\circ \param$
for
$\cost$
(see also Remark~\ref{remark:gradient_consistency}).
\begin{theorem}[Gradient sub-consistency of smoothed surrogate functions]\label{theorem:stationary}
  \
  \begin{enumerate}[label=(\alph*)]
    \item \label{enum:gradient_consistency}
          Let
          $\mathcal{Y}$
          and
          $\mathcal{Z}$
          be Euclidean spaces.
          Let
          $J:\mathcal{Z}\to \mathbb{R}$
          be
          $\eta$-weakly convex,
          and
          $\mathcal{F}:\mathcal{Y}\to \mathcal{Z}$
          be continuously differentiable.
          Then,
          $\moreau{J}{\mu}\circ \mathcal{F}$
          enjoys the {\em gradient sub-consistency property} (see Remark~\ref{remark:gradient_consistency}) for
          $J\circ \mathcal{F}$,
          i.e.,
          \begin{equation}
            (\widebar{\bm{y}} \in \mathcal{Y}) \quad
            \Limsup_{\mathbb{R}_{++}\ni \mu\searrow0,\ \mathcal{Y}\ni \bm{y}\to\widebar{\bm{y}}} \nabla (\moreau{J}{\mu}\circ \mathcal{F})(\bm{y})
            \subset
            \Lsubdiff  (J\circ \mathcal{F})(\widebar{\bm{y}}). \label{eq:gradient_consistency}
          \end{equation}
    \item \label{enum:stationarity}
          Consider a smoothed surrogate function
          $\cost^{\langle \mu \rangle} \circ \param \coloneqq (h+\moreau{g}{\mu}\circ\mathfrak{S})\circ\param\ (\mu\in(0,\eta^{-1}))$
          of
          $\cost \circ \param = (h+g\circ\mathfrak{S})\circ\param$
          in Problem~\ref{problem:origin} (see also Problem~\ref{problem:constrained}).
          Then, we have
          \begin{equation}
            \Limsup_{\mathbb{R}_{++} \ni \mu\searrow 0,\ \mathcal{Y}\ni \bm{y}\to \widebar{\bm{y}}} \nabla (\cost^{\langle \mu \rangle}\circ \param )(\bm{y}) \subset \Lsubdiff  (\cost \circ \param )(\widebar{\bm{y}}). \label{eq:gradient_consistency_problem}
          \end{equation}
          Moreover, if a given sequence
          $(\bm{y}_{n})_{n=1}^{\infty} \subset \mathcal{Y}$
          converges to some
          $\widebar{\bm{y}} \in \mathcal{Y}$,
          and
          $(\mu_{n})_{n=1}^{\infty} \searrow 0$
          with
          $\mu_{n} \in (0,\eta^{-1})$,
          then
          \begin{equation}
            d(\bm{0}, \Lsubdiff  (\cost \circ  \param )(\widebar{\bm{y}}))
            \leq \liminf_{n\to\infty}\norm{\nabla (\cost^{\langle \mu_{n} \rangle}\circ \param )(\bm{y}_{n})}.\label{eq:necessary_bound}
          \end{equation}
  \end{enumerate}
\end{theorem}
\begin{proof}
  See Appendix~\ref{appendix:theorem_stationary}.
\end{proof}

\begin{remark}[Gradient (sub-)consistency] \label{remark:gradient_consistency}
  \
  \begin{enumerate}[label=(\alph*)]
    \item \label{enum:converse_inclusion}
          Although the converse inclusion of~\eqref{eq:gradient_consistency} is not required in this paper, the equality
          \begin{equation}
            (\widebar{\bm{y}}\in \mathcal{Y}) \quad \Limsup_{\mathbb{R}_{++}\ni \mu\searrow0,\ \mathcal{Y}\ni \bm{y}\to\widebar{\bm{y}}} \nabla (\moreau{J}{\mu}\circ \mathcal{F})(\bm{y}) = \Lsubdiff  (J\circ \mathcal{F})(\widebar{\bm{y}}), \label{eq:gradient_consistency_equall}
          \end{equation}
          called
          the {\em gradient consistency property}~\cite{Chen12} of
          $\moreau{J}{\mu}\circ\mathcal{F}$
          for
          $J\circ \mathcal{F}$,
          can also be shown (i) by
          checking that
          $\moreau{J}{\mu}\circ \mathcal{F}$
          {\em epi-converges} to
          $J\circ \mathcal{F}$ (see~\cite[Def. 7.1]{Rockafellar-Wets98} for the definition of epi-convergence);
          and
          (ii) by applying~\cite[Lemma 3.4]{Burke-Hoheisel13} to
          $\moreau{J}{\mu}\circ \mathcal{F}$.
          Indeed, (i) can be verified by~\cite[Thm. 7.4(d)]{Rockafellar-Wets98} and
          $\moreau{J}{\mu}\circ \mathcal{F} \nearrow J\circ \mathcal{F}$
          as
          $\mu \searrow 0$.
    \item
          In a special case where
          $J$
          is convex, we can check that
          $\moreau{J}{\mu}\circ\mathcal{F}$
          in Theorem~\ref{theorem:stationary}~\ref{enum:gradient_consistency}
          has the gradient consistency property for
          $J\circ \mathcal{F}$
          by using~\cite[Cor. 5.8]{Burke-Hoheisel13}.
          However, the discussion in the proof of~\cite[Cor. 5.8]{Burke-Hoheisel13} cannot apply to our case where
          $J$
          is just weakly convex because the proof of~\cite[Cor. 5.8]{Burke-Hoheisel13} relies on the structure of the convexity of
          $J$
          (see Attouch's theorem~\cite[Thm. 12.35]{Rockafellar-Wets98}).
          In order to show the gradient consistency of
          $\moreau{J}{\mu} \circ \mathcal{F}$
          even with
          $J$
          weakly convex, we take a different approach in the proof of Theorem~\ref{theorem:stationary}.
          We note that our smoothed surrogate function
          $(h+\moreau{g}{\mu}\circ \mathfrak{S}) \circ \param$
          with a linear
          $\mathfrak{S}$
          and
          $\param\coloneqq \Id$
          has been employed in the existing variable smoothing algorithms~\cite{Bohm-Wright21,Beck-Rosset23,Peng-Wu-Hu-Deng23}, although the gradient consistency property has not been discussed therein.
    \item
          We can construct another smoothed surrogate function, with gradient consistency property, of
          $\cost\circ\param$
          via the integral convolution of
          $\cost \circ \param$
          and a kernel function, called {\em mollifier} (see~\cite[Exm. 7.19, Thm. 9.67]{Rockafellar-Wets98} or~\cite[Thm. 4.7]{Burke-Hoheisel-Kanzow13}), because
          $\cost\circ\param$
          is subdifferentially regular (see Lemma~\ref{lemma:subdifferentially}).
          However, it is questionable to obtain closed-form expressions
          of this type of smoothed surrogate functions for
          $\cost \circ \param$
          and its gradient, except for simple functions, e.g., the plus function
          $\mathbb{R}\ni x \mapsto \max\{x, 0\}$~\cite{Chen-Mangasarian96}.
          In contrast, the closed-form expressions of our smoothed surrogate function
          $(h+\moreau{g}{\mu}\circ \mathfrak{S}) \circ \param$
          and its gradient are available as computable tools in a case where
          $g$
          is prox-friendly
          (see~\eqref{eq:expression_cost_gradient}).
  \end{enumerate}
\end{remark}

\subsection{Variable Smoothing Algorithm with Convergence Analysis} \label{sec:proposed} \label{sec:convergence}

The inequality~\eqref{eq:necessary_bound} in Theorem~\ref{theorem:stationary}~\ref{enum:stationarity} implies that finding a stationary point
$\bm{y}^{\star} \in \mathcal{Y}$
of Problem~\ref{problem:origin}
satisfying~\eqref{eq:distance_stationary}
can be reduced to the following problem:
\begin{equation} \label{eq:Moreau_optimality}
  \begin{split}
     & {\rm find\ a\ convergent\ sequence}\ (\bm{y}_{n})_{n=1}^{\infty} \subset \mathcal{Y}\ {\rm such\ that}            \\
     & \quad \liminf_{n\to\infty}\norm{\nabla ((h+\moreau{g}{\mu_{n}}\circ \mathfrak{S})\circ \param )(\bm{y}_{n})}  = 0
    \ {\rm with\ } (\mu_{n})_{n=1}^{\infty} (\subset (0,\eta^{-1})) \searrow 0.
  \end{split}
\end{equation}
In this section,
for finding a stationary point of  Problem~\ref{problem:origin} via the problem in~\eqref{eq:Moreau_optimality},
we present a variable smoothing algorithm in Algorithm~\ref{alg:vsmooth} as an extension of the variable smoothing algorithm~\cite{Bohm-Wright21} that has been proposed for Problem~\ref{problem:origin} with
$\param = \Id$
and
a linear operator
$\mathfrak{S}$.
We note that the algorithm~\cite{Bohm-Wright21} is not applicable to Problem~\ref{problem:origin} due to the nonlinearity of
$\mathfrak{S}$
even if
$\param = \Id$.
The proposed algorithm is designed with a time-varying gradient descent update for minimization of
\begin{equation}
  \cost_{n}\circ \param \coloneqq (h+\moreau{g}{\mu_{n}}\circ \mathfrak{S}) \circ \param
  \ \mathrm{with}\ \cost_{n}:\mathcal{X}\to\mathbb{R}:\widebar{\bm{x}}\mapsto (h+\moreau{g}{\mu_{n}}\circ \mathfrak{S})(\widebar{\bm{x}})
\end{equation}
at $n$th update from
$\bm{y}_{n} \in \mathcal{Y}$
to
$\bm{y}_{n+1} \coloneqq  \bm{y}_{n} - \gamma_{n}\nabla ( \cost_{n}\circ \param )(\bm{y}_{n}) \in \mathcal{Y}$
with
$\mu_{n} \searrow 0$
and a stepsize
$\gamma_{n}>0$.
In a case where
$\prox{\mu_{n}g}$
has a closed-form expression,
the values of
$ \cost_{n} \circ \param(\widebar{\bm{y}})$
and
$\nabla (\cost_{n}\circ \param)(\widebar{\bm{y}})$
are available for every
$\widebar{\bm{y}} \in \mathcal{Y}$
(see~\eqref{eq:expression_cost_gradient}).

The index
$\mu_{n} \in (0,(2\eta)^{-1}]$
in Algorithm~\ref{alg:vsmooth} is designed to establish a convergence analysis of Algorithm~\ref{alg:vsmooth} as
\begin{equation}
  (\mu_{n})_{n=1}^{\infty}\subset (0,(2\eta)^{-1}]\
  \mathrm{satisfies}\
  \begin{cases}
     & {\rm(i)}\ \lim_{n\to\infty} \mu_{n} = 0,
    {\rm(ii)}\ \sum_{n=1}^{\infty} \mu_{n} = +\infty, \\
     &
    {\rm(iii)}\ \mu_{n+1}\leq \mu_{n}\ (n\in \mathbb{N}),
  \end{cases}
  \label{eq:nonsummable}
\end{equation}
where
$\eta>0$
is the level of the weak convexity of
$g$
(see Problem~\ref{problem:constrained}).
We have many choices of
$(\mu_{n})_{n=1}^{\infty}$
enjoying~\eqref{eq:nonsummable} as shown in
Example~\ref{ex:nonsummable}.

\begin{algorithm}[t]
  \caption{Variable smoothing for Problem~\ref{problem:origin} (VSmooth)}
  \label{alg:vsmooth}
  \begin{algorithmic}[1]
    \Require
    $\bm{y}_{1}\in \mathcal{Y},\ c\in(0,1),\ (\mu_{n})_{n=1}^{\infty}\subset (0,(2\eta)^{-1}]\ \mathrm{satisfying~\eqref{eq:nonsummable}\ (see\ Example~\ref{ex:nonsummable})}$
    \State
    $n \leftarrow 1$
    \While{stopping criterion in not satisfied}
    \State
    Set
    $ \cost_{n}\coloneqq  (h+\moreau{g}{\mu_{n}}\circ \mathfrak{S})$
    \State
    Set
    $\gamma_{n} > 0$
    satisfying~\eqref{eq:stepsize} with
    $c$
    (see Example~\ref{ex:stepsize})
    \State
    $\bm{y}_{n+1} \leftarrow \bm{y}_{n}-\gamma_{n}\nabla ( \cost_{n}\circ \param )(\bm{y}_{n})$
    (see~\eqref{eq:expression_cost_gradient} for the expression of $\nabla(\cost_{n}\circ\param)(\bm{y}_{n})$)
    \State
    $n \leftarrow n+1$
    \EndWhile
    \Ensure
    $\bm{y}_{n} \in \mathcal{Y}$
  \end{algorithmic}
\end{algorithm}

\begin{example}[$(\mu_{n})_{n=1}^{\infty}$ satisfying~\eqref{eq:nonsummable}] \label{ex:nonsummable}
  \
  \begin{enumerate}[label=(\alph*)]
    \item \label{enum:ex:nonsummable:exp}
          For any
          $\alpha \geq 1$,
          $(\mu_{n})_{n=1}^{\infty}\coloneqq ((2\eta)^{-1}n^{-1/\alpha})_{n=1}^{\infty}$
          satisfies~\eqref{eq:nonsummable}.
    \item
          $(\mu_{n})_{n=1}^{\infty}\coloneqq\left(\frac{1}{2\eta(n+1)\log(n+1)}\right)_{n=1}^{\infty}$
          satisfies~\eqref{eq:nonsummable}.
  \end{enumerate}
\end{example}
\begin{proof}
  (a)
  Let
  $\alpha \geq 1$
  be chosen arbitrarily.
  Clearly,
  $(\mu_{n})_{n=1}^{\infty} \subset (0,(2\eta)^{-1}]$
  satisfies
  the conditions (i) and (iii) in~\eqref{eq:nonsummable}.
  The condition (ii) can be checked by
  \begin{equation}
    \thickmuskip=0.5\thickmuskip
    \medmuskip=0.5\medmuskip
    \thinmuskip=0.5\thinmuskip
    \arraycolsep=0.5\arraycolsep
    \sum_{n=1}^{K} n^{-1/\alpha}
    = \sum_{n=1}^{K} \int_{n}^{n+1}n^{-1/\alpha}dx
    \geq
    \int_{1}^{K+1} x^{-1/\alpha} dx
    =
    \begin{cases}
      \frac{(K+1)^{1-\alpha^{-1}}- 1}{1-\alpha^{-1}} & (\alpha > 1) \\
      \log(K+1)                                      & (\alpha = 1)
    \end{cases}
    \overset{K\to\infty}{\to} +\infty. \label{eq:delta_example}
  \end{equation}

  (b)
  Clearly,
  $(\mu_{n})_{n=1}^{\infty} \subset (0,(2\eta)^{-1}]$
  satisfies the conditions (i) and (iii).
  The condition (ii) can be checked by
  \begin{align}
     & \sum_{n=1}^{K} \frac{1}{(n+1)\log(n+1)}
    = \sum_{n=1}^{K} \int_{n}^{n+1}\frac{dx}{(n+1)\log(n+1)}
    \geq
    \int_{1}^{K+1} \frac{dx}{(x+1)\log(x+1)}                            \\
     & \overset{u=\log(x+1)}{=} \int_{\log(2)}^{\log(K+2)} \frac{du}{u}
    = \log(\log(K+2)) - \log(\log(2))
    \overset{K\to\infty}{\to} +\infty.
  \end{align}
\end{proof}

Every stepsize
$\gamma_{n} > 0$
in Algorithm~\ref{alg:vsmooth} is chosen to enjoy the so-called {\em Armijo condition} (see, e.g.,~\cite[Thm. 1.1]{Andrei20}) with a predetermined constant
$c \in (0,1)$:
\begin{equation} \label{eq:Armijo}
  \cost_{n}\circ \param (\bm{y}_{n} - \gamma_{n} \nabla ( \cost_{n}\circ \param )(\bm{y}_{n})) \leq \cost_{n}\circ \param  (\bm{y}_{n}) - c\gamma_{n} \norm{\nabla ( \cost_{n}\circ \param )(\bm{y}_{n})}^{2}.
\end{equation}
In a case where
$\gamma_{n}$
satisfies the Armijo condition~\eqref{eq:Armijo}, the descent condition
$ \cost_{n}\circ \param (\bm{y}_{n} - \gamma_{n} \nabla ( \cost_{n}\circ \param )(\bm{y}_{n})) < \cost_{n}\circ \param (\bm{y}_{n})$
is achieved if
$\nabla ( \cost_{n}\circ \param )(\bm{y}_{n})\neq \bm{0}$,
and thus we update
$\bm{y}_{n+1} \coloneqq  \bm{y}_{n} - \gamma_{n} \nabla ( \cost_{n}\circ \param )(\bm{y}_{n})$.

To use
$\gamma_{n}$
satisfying~\eqref{eq:Armijo} in Algorithm~\ref{alg:vsmooth}, we assume Assumption~\ref{assumption:Lipschitz} below.
Sufficient conditions for Assumption~\ref{assumption:Lipschitz}~\ref{enum:gradient_Lipschitz} are found in Proposition~\ref{proposition:extension:Lipschitz_f} and Remark~\ref{remark:assumption}.
Moreover, Example~\ref{ex:stepsize} presents typical choices of
$\gamma_{n}$
achieving Assumption~\ref{assumption:Lipschitz}~\ref{enum:stepsize} under
Assumption~\ref{assumption:Lipschitz}~\ref{enum:gradient_Lipschitz}.

\begin{assumption}\label{assumption:Lipschitz}
  Consider Problem~\ref{problem:origin} and Alg.~\ref{alg:vsmooth}.
  For
  $ \cost_{n} =  h+\moreau{g}{\mu_{n}}\circ\mathfrak{S}$,
  we assume:
  \begin{enumerate}[label=(\alph*)]
    \item (Gradient Lipschitz continuity condition) \label{enum:gradient_Lipschitz}
          {\small
          \begin{align}
             & \hspace{-2em}(\exists \varpi_{1} \in \mathbb{R}_{+}, \exists \varpi_{2} \in \mathbb{R}_{++},  \forall n\in \mathbb{N})                                                                                                                   \\
             & \hspace{-2em} \nabla ( \cost_{n}\circ \param )~ \mathrm{is~Lipschitz~continuous}\mathrm{~with~a~Lipschitz~constant~} L_{\nabla ( \cost_{n}\circ \param )}\coloneqq \varpi_{1}+\frac{\varpi_{2}}{\mu_{n}}. \label{eq:surrogate_Lipschitz}
          \end{align}}
          (Note:
          under the condition~\eqref{eq:surrogate_Lipschitz},
          we have
          $L_{\nabla ( \cost_{n+1}\circ \param )} \geq L_{\nabla ( \cost_{n}\circ \param )} \geq \cdots \geq L_{\nabla ( \cost_{1}\circ \param )}$
          by
          $\mu_{n}\searrow 0$
          [see the condition (iii) in~\eqref{eq:nonsummable}] for $(\mu_{n})_{n=1}^{\infty}$)
    \item (Lower bound condition for stepsizes) \label{enum:stepsize}
          \begin{equation}
            (\exists \beta \in \mathbb{R}_{++}, \forall n\in \mathbb{N})\quad
            \gamma_{n}\in \mathbb{R}_{++}\ \mathrm{satisfies}~\eqref{eq:Armijo}\ \mathrm{and}\ \gamma_{n} \geq \beta L_{\nabla ( \cost_{n}\circ \param )}^{-1}. \label{eq:stepsize}
          \end{equation}
  \end{enumerate}
\end{assumption}

\begin{example}[Stepsize achieving Assumption~\ref{assumption:Lipschitz}~\ref{enum:stepsize}] \label{ex:stepsize}
  Under Assumption~\ref{assumption:Lipschitz}~\ref{enum:gradient_Lipschitz}, we have mainly two choices  of
  $\gamma_{n}$
  achieving~\eqref{eq:stepsize} (see, e.g.,~\cite[Thm. 1.1]{Andrei20}):
  \begin{enumerate}[label=(\alph*)]
    \item \label{enum:ex:stepsize:Lipschitz}
          With the Lipschitz constant
          $L_{\nabla ( \cost_{n}\circ \param )}\in \mathbb{R}_{++}$,
          \begin{equation}
            (n\in \mathbb{N}) \quad \gamma_{n}\coloneqq  \beta L_{\nabla ( \cost_{n}\circ \param )}^{-1}\coloneqq 2(1-c)L_{\nabla ( \cost_{n}\circ \param )}^{-1} \in \mathbb{R}_{++} \label{eq:step_Lipschitz}
          \end{equation}
          satisfies the Armijo condition~\eqref{eq:Armijo} with
          $c \in (0,1)$, and thus~\eqref{eq:stepsize}.
    \item \label{enum:ex:stepsize:backtracking}
          The so-called {\em backtracking algorithm} in Algorithm~\ref{alg:backtracking} returns
          $\gamma_{n}$
          enjoying the Armijo condition~\eqref{eq:Armijo} with
          $c \in (0,1)$, where
          $\gamma_{\rm init} \in \mathbb{R}_{++}$
          is an initial guess for
          $\gamma_{n}$,
          and
          $\rho \in (0,1)$.
          In this case,
          $\gamma_{n}$
          is known to have
          the following lower bound:
          \begin{equation}
            (n\in\mathbb{N}) \quad \gamma_{n} \geq \min\left\{\gamma_{\rm init}L_{\nabla ( \cost_{n}\circ \param )}, 2\rho(1-c)\right\}L_{\nabla ( \cost_{n}\circ \param )}^{-1}
            \geq \beta L_{\nabla ( \cost_{n}\circ \param )}^{-1}
            \label{eq:stepsize_lower_bound}
          \end{equation}
          with
          $\beta \coloneqq\min\left\{\gamma_{\rm init}L_{\nabla ( \cost_{1}\circ \param )}, 2\rho(1-c)\right\} \in \mathbb{R}_{++}$,
          where the first inequality follows by~\cite[Thm. 1.1]{Andrei20}, and the second inequality follows\footnote{
            The derivation of the second inequality is inspired by~\cite[the equality just after (4.12)]{Beck-Rosset23}.
          } by
          $L_{\nabla ( \cost_{n}\circ \param )}   \geq L_{\nabla ( \cost_{1}\circ \param )}$
          (see the sentence just after~\eqref{eq:surrogate_Lipschitz}).
          From~\eqref{eq:stepsize_lower_bound},
          Algorithm~\ref{alg:backtracking} is guaranteed to terminate in at most
          $\max\left\{1, \left\lceil \log_{\rho}\left(\frac{2(1-c)}{\gamma_{\rm init}L_{\nabla (\cost_{n}\circ\param)}}\right)\right\rceil\right\}
            = \max\left\{1, \left\lceil \log_{\rho}\left(\frac{2\mu_{n}(1-c)}{\gamma_{\rm init}(\mu_{n}\varpi_{1}+\varpi_{2})}\right)\right\rceil\right\}$
          updates, and
          $\gamma_{n}$
          given by Algorithm~\ref{alg:backtracking} satisfies~\eqref{eq:stepsize},
          where
          $\lceil \cdot \rceil$
          stands for the ceiling function.
          We note that any knowledge on
          $L_{\nabla ( \cost_{n}\circ \param )}=\varpi_{1}+\varpi_{2}\mu_{n}^{-1}$
          with
          $\varpi_{1},\varpi_{2} \in \mathbb{R}_{+}$
          is not required in Algorithm~\ref{alg:backtracking}.
  \end{enumerate}
\end{example}

\begin{algorithm}[t]
  \caption{Backtracking algorithm to find $\gamma_{n}$ satisfying~\eqref{eq:Armijo}}
  \label{alg:backtracking}
  \begin{algorithmic}
    \Require
    $c \in (0,1), \rho \in (0, 1),\ \gamma_{\rm init} \in \mathbb{R}_{++}$
    \State
    $\gamma_{n} \leftarrow \gamma_{\rm init}$
    \State
    $f_{n} \leftarrow \cost_{n}\circ \param (\bm{y}_{n}) \in \mathbb{R}$,
    $\bm{d}_{n} \leftarrow \nabla ( \cost_{n}\circ \param )(\bm{y}_{n}) \in \mathcal{Y}$
    \While{$ \cost_{n}\circ \param (\bm{y}_{n} - \gamma_{n} \bm{d}_{n}) > f_{n} - c\gamma_{n} \norm{\bm{d}_{n}}^{2} $}
    \State
    $\gamma_{n} \leftarrow \rho \gamma_{n}$
    \EndWhile
    \Ensure
    $\gamma_{n}$
  \end{algorithmic}
\end{algorithm}

We present below a convergence analysis of Algorithm~\ref{alg:vsmooth} under Assumption~\ref{assumption:Lipschitz}.

\begin{theorem}[Asymptotic convergence analysis of Algorithm~\ref{alg:vsmooth}]\label{theorem:convergence_extension}
  Consider Problem~\ref{problem:origin}.
  Choose arbitrarily
  $\bm{y}_{1} \in \mathcal{Y}$,
  $c \in (0,1)$,
  and
  $(\mu_{n})_{n=1}^{\infty} \subset (0,(2\eta)^{-1}]$
  satisfying~\eqref{eq:nonsummable}.
  Suppose that
  $(\bm{y}_{n})_{n=1}^{\infty} \subset \mathcal{Y}$
  is generated by Algorithm~\ref{alg:vsmooth} under Assumption~\ref{assumption:Lipschitz} with
  $ \cost_{n}\coloneqq \cost^{\langle \mu_{n}\rangle} \coloneqq  h+\moreau{g}{\mu_{n}}\circ\mathfrak{S}\ (n\in \mathbb{N})$.
  Then,
  $ \cost_{n}\circ \param \ (n\in \mathbb{N})$
  and
  $(\bm{y}_{n})_{n=1}^{\infty}$
  enjoy the following:
  \begin{enumerate}[label=(\alph*)]
    \item \label{enum:convergence_2}
          For any pair
          $(\underline{k}, \overline{k}) \in \mathbb{N}^{2}$
          such that
          $\underline{k} \leq \overline{k}$,
          we have
          \begin{equation}
            \min_{\underline{k}\leq n \leq \overline{k}} \norm{ \nabla ( \cost_{n}\circ \param )(\bm{y}_{n})}
            \leq
            \sqrt{\frac{\chi}{\sum_{n=\underline{k}}^{\overline{k}}\mu_{n}}}.
            \label{eq:convergence_rate}
          \end{equation}
          with
          $\chi\coloneqq \frac{(\cost_{1}\circ \param (\bm{y}_{1}) - \inf_{\bm{y}\in\mathcal{Y}} \cost_{1}\circ \param (\bm{y}) + L_{g}^{2}\mu_{1})(\varpi_{1} + \eta\varpi_{2})}{c\beta\eta} \in \mathbb{R}_{++}$,
          where
          (I)
          $L_{g},\eta \in \mathbb{R}_{++}$
          are respectively the Lipschitz constant and the level of the weak convexity of
          $g$ (see the condition~\ref{enum:problem:constrained:g} in Problem~\ref{problem:constrained}),
          and
          (II)
          $\varpi_{1} \in \mathbb{R}_{+}$
          and
          $\varpi_{2},\beta \in \mathbb{R}_{++}$
          are given in Assumption~\ref{assumption:Lipschitz}.
    \item \label{enum:convergence_3}
          \begin{equation}
            \liminf_{n\to\infty}\norm{\nabla ( \cost_{n}\circ \param )(\bm{y}_{n})} = 0. \label{eq:gradient_liminf}
          \end{equation}
    \item \label{enum:convergence_4}
          Choose a subsequence
          $(\bm{y}_{m(l)})_{l=1}^{\infty} \subset \mathcal{Y}$
          of
          $(\bm{y}_{n})_{n=1}^{\infty}$
          such that
          $\lim_{l\to\infty} \norm{\nabla ( \cost_{m(l)}\circ \param )(\bm{y}_{m(l)})} = 0$,
          where
          $m:\mathbb{N}\to\mathbb{N}$
          is monotonically increasing\footnote{
            From Theorem~\ref{theorem:convergence_extension}~\ref{enum:convergence_3}, we can construct such a subsequence
            $(\bm{y}_{m(l)})_{l=1}^{\infty}$
            by using, e.g.,
            \begin{equation}
              m(l) \coloneqq
              \begin{cases}
                1                                                                                                          & (l=1)    \\
                \min \{n \in \mathbb{N} \mid n > m(l-1), \norm{\nabla ( \cost_{n}\circ \param )(\bm{y}_{n})} \leq 2^{-l}\} & (l > 1).
              \end{cases}
            \end{equation}
          }.
          Then, every cluster point
          $\bm{y}^{\star} \in \mathcal{Y}$
          of
          $(\bm{y}_{m(l)})_{l=1}^{\infty}$
          is a stationary point of Problem~\ref{problem:origin}.
          Moreover,
          if
          $N_{ C}(\param(\bm{y}^{\star})) \supset  \mathrm{Ker}((\mathrm{D}\param (\bm{y}^{\star}))^{*})$
          holds, then
          $\param(\bm{y}^{\star})\in C$
          is a stationary point of Problem~\ref{problem:constrained}
          by Theorem~\ref{theorem:necessary}~\ref{enum:necessary_C_F} (see also Corollary~\ref{corollary:optimality_C_F} and Remark~\ref{remark:submersion}).
    \item \label{enum:convergence:rate}
          Consider
          $\mu_{n} \coloneqq \tau n^{-1/\alpha}\ (n\in\mathbb{N})$
          with
          $\tau \in (0,(2\eta)^{-1}]$
          and
          $\alpha > 1$.
          Then,
          $(\mu_{n})_{n=1}^{\infty}$
          satisfies~\eqref{eq:nonsummable} by Example~\ref{ex:nonsummable}~\ref{enum:ex:nonsummable:exp},
          and, for a given tolerance
          $\epsilon \in (0, 1)$,
          the conditions
          $\norm{\nabla (\cost_{n}\circ\param)(\bm{y}_{n})} < \epsilon$
          and
          $\mu_{n} < \epsilon$
          are jointly achieved within at most
          \begin{equation}
            \left\lceil\left(\chi\tau^{-1}(1-\alpha^{-1}) + \tau^{\alpha-1} + 2\right)^{\frac{\alpha}{\alpha-1}} \epsilon^{\min\left\{\frac{-2\alpha}{\alpha-1},-\alpha\right\}}\right\rceil
            \ \mathrm{iterations},
          \end{equation}
          where
          $\chi \in \mathbb{R}_{++}$
          is given in Theorem~\ref{theorem:convergence_extension}~\ref{enum:convergence_2},
          $\lceil \cdot \rceil$
          stands for the ceiling function, and such
          $\bm{y}_{n}$
          is an $\epsilon$-approximate stationary point defined in Definition~\ref{definition:complexity}~\ref{enum:definition:complexity:approximate_stationarity}.
  \end{enumerate}
\end{theorem}
\begin{proof}
  The proof for (a) is inspired partially by that for~\cite[Thm. 4.1]{Bohm-Wright21}.

  (a)
  Since
  $\gamma_{n}>0$
  satisfies the Armijo condition~\eqref{eq:Armijo} by Assumption~\ref{assumption:Lipschitz}~\ref{enum:stepsize}, we have
  \begin{align}
    (n \in \mathbb{N}) \quad
    \cost_{n}\circ \param (\bm{y}_{n+1})
     & = \cost_{n}\circ \param (\bm{y}_{n}-\gamma_{n}\nabla ( \cost_{n}\circ \param )(\bm{y}_{n}))                                              \\
     & \leq \cost_{n}\circ \param (\bm{y}_{n}) - c\gamma_{n}\norm{\nabla (\cost_{n}\circ \param )(\bm{y}_{n})}^{2}. \label{eq:extension:Armijo}
  \end{align}
  By~\eqref{eq:smoothed_uniformly_different_parameter} in Lemma~\ref{lemma:basic} with
  $0 < \mu_{n+1} \leq \mu_{n} < \eta^{-1}$,
  we have
  \begin{equation}
    (n \in \mathbb{N}, \bm{y} \in \mathcal{Y})\quad  \cost_{n+1}\circ \param (\bm{y})
    \overset{\eqref{eq:smoothed_uniformly_different_parameter}}{\leq} \cost_{n}\circ\param (\bm{y}) + L_{g}^{2}(\mu_{n}-\mu_{n+1}).
    \label{eq:extension:f_k_moreau}
  \end{equation}
  By combining~\eqref{eq:extension:Armijo} and~\eqref{eq:extension:f_k_moreau}, we obtain
  \begin{align}
    (n\in \mathbb{N}) \quad &
    \cost_{n+1}\circ \param (\bm{y}_{n+1})
    \overset{\eqref{eq:extension:f_k_moreau}}{\leq} \cost_{n}\circ \param (\bm{y}_{n+1}) + L_{g}^{2}(\mu_{n}-\mu_{n+1})                                                                                                                                     \\
                            & \overset{\eqref{eq:extension:Armijo}}{\leq} \cost_{n}\circ \param (\bm{y}_{n}) - c\gamma_{n}\norm{\nabla (\cost_{n}\circ \param )(\bm{y}_{n})}^{2} + L_{g}^{2}(\mu_{n}-\mu_{n+1}), \label{eq:extension:gradient_inequality_1}
  \end{align}
  and thus
  \begin{equation}
    (n\in \mathbb{N}) \quad
    \cost_{n+1}\circ \param (\bm{y}_{n+1})
    \leq \cost_{n}\circ \param (\bm{y}_{n})  + L_{g}^{2}(\mu_{n}-\mu_{n+1}). \label{eq:extension:gradient_inequality}
  \end{equation}
  By summing~\eqref{eq:extension:gradient_inequality_1} up from
  $n = \underline{k}$
  to
  $\overline{k}$,
  the following inequality holds:
  \begin{align}
    c\sum_{n=\underline{k}}^{\overline{k}} \gamma_{n} \norm{\nabla (\cost_{n}\circ \param )(\bm{y}_{n})}^{2}
     & \leq \cost_{\underline{k}}\circ \param (\bm{y}_{\underline{k}}) - \cost_{\overline{k}+1}\circ \param (\bm{y}_{\overline{k}+1}) + L_{g}^{2}(\mu_{\underline{k}}-\mu_{\overline{k}+1})                                    \\
     & \leq \cost_{\underline{k}}\circ \param (\bm{y}_{\underline{k}}) - \inf_{\bm{y}\in\mathcal{Y}} \cost_{1}\circ \param (\bm{y}) + L_{g}^{2}(\mu_{\underline{k}}-\mu_{\overline{k}+1})                                      \\
     & \leq \cost_{1}\circ \param (\bm{y}_{1}) - \inf_{\bm{y}\in\mathcal{Y}} \cost_{1}\circ \param (\bm{y}) + L_{g}^{2}(\mu_{1}-\mu_{\overline{k}+1})                                                                          \\
     & \leq \cost_{1}\circ \param (\bm{y}_{1}) - \inf_{\bm{y}\in\mathcal{Y}} \cost_{1}\circ \param (\bm{y}) + L_{g}^{2}\mu_{1} \overset{\eqref{eq:smoothed_bounded_below}}{<} +\infty, \label{eq:extension:sum_gradient_bound}
  \end{align}
  where the second inequality follows by
  $\inf_{\bm{y}\in \mathcal{Y}}\cost_{1}\circ \param (\bm{y}) \overset{\eqref{eq:smoothed_uniformly_different_parameter}}{\leq} \inf_{\bm{y}\in \mathcal{Y}}\cost_{\overline{k}+1}\circ \param (\bm{y}) \leq \cost_{\overline{k}+1} \circ \param (\bm{y}_{\overline{k}+1})$,
  and the third inequality follows by using~\eqref{eq:extension:gradient_inequality} recursively.

  By Assumption~\ref{assumption:Lipschitz}, recall that
  (i)
  $\nabla (\cost_{n}\circ \param )$
  is Lipschitz continuous with a Lipschitz constant
  $L_{\nabla (\cost_{n}\circ \param )}=\varpi_{1} + \varpi_{2}\mu_{n}^{-1}$
  with some constants
  $\varpi_{1}\geq 0,\varpi_{2} > 0$;
  (ii)
  every
  $\gamma_{n}$
  is bounded below by
  $\beta L_{\nabla (\cost_{n}\circ \param )}^{-1}$
  with some
  $\beta \in \mathbb{R}_{++}$.
  We have
  \begin{equation}
    (n\in \mathbb{N})\quad \gamma_{n}  \geq \beta L_{\nabla (\cost_{n}\circ \param )}^{-1} = \frac{\beta\mu_{n}}{\varpi_{1}\mu_{n} + \varpi_{2}}
    =\frac{\beta\eta\mu_{n}}{\varpi_{1}\eta\mu_{n} + \eta\varpi_{2}}
    \geq \frac{\beta\eta}{\varpi_{1}+\eta\varpi_{2}}\mu_{n},
  \end{equation}
  where the last inequality follows by
  $\eta\mu_{n} \in (0,2^{-1}]$.
  Then, the LHS in~\eqref{eq:extension:sum_gradient_bound} can be further bounded below as
  \begin{align}
    c\sum_{n=\underline{k}}^{\overline{k}} \gamma_{n} \norm{\nabla (\cost_{n}\circ \param )(\bm{y}_{n})}^{2}
     & \geq \frac{c\beta\eta}{\varpi_{1} + \eta\varpi_{2}}\sum_{n=\underline{k}}^{\overline{k}} \mu_{n} \norm{\nabla (\cost_{n}\circ \param )(\bm{y}_{n})}^{2}                                                                          \\
     & \geq \frac{c\beta\eta}{\varpi_{1} + \eta\varpi_{2}}\min_{\underline{k}\leq n \leq \overline{k}}\norm{\nabla (\cost_{n}\circ \param )(\bm{y}_{n})}^{2} \sum_{n=\underline{k}}^{\overline{k}} \mu_{n} . \label{eq:tmp_convergence}
  \end{align}
  By~\eqref{eq:extension:sum_gradient_bound} and~\eqref{eq:tmp_convergence}, we get the desired inequality in~\eqref{eq:convergence_rate}.

  (b)
  Let
  $a_{n}\coloneqq \norm{\nabla (\cost_{n}\circ \param )(\bm{y}_{n})}$,
  and
  $b_{n} \coloneqq  \inf\{a_{m} \mid m \geq n\}$.
  For every
  $\underline{k} \in \mathbb{N}$,
  the inequality
  \eqref{eq:convergence_rate} with
  $\overline{k}\to\infty$
  implies
  $b_{\underline{k}} = 0$
  thanks to the nonsummability of
  $(\mu_{n})_{n=1}^{\infty}$
  (see the condition (ii) in~\eqref{eq:nonsummable}).
  This implies
  $\liminf_{n\to\infty}\norm{\nabla (\cost_{n}\circ \param )(\bm{y}_{n})} = \liminf_{n\to\infty} a_{n} = \lim_{n\to\infty} b_{n} = 0$.

  (c)
  Let
  $\bm{y}^{\star} \in \mathcal{Y}$
  be a cluster point of
  $(\bm{y}_{m(l)})_{l=1}^{\infty}$.
  By~\eqref{eq:necessary_bound} and Theorem~\ref{theorem:convergence_extension}~\ref{enum:convergence_3}, we have
  $d(\bm{0}, \Lsubdiff  (\cost\circ \param )(\bm{y}^{\star})) = 0$
  with
  $\cost = h+g\circ \mathfrak{S}$,
  implying
  $\bm{y}^{\star}$
  satisfies~\eqref{eq:necessarily_origin}.

  (d)
  By
  $\mu_{n} = \tau n^{-1/\alpha}$,
  we have
  $\mu_{n} < \epsilon$
  for all
  $n \geq \underline{k} \coloneqq \lceil \tau^{\alpha} \epsilon^{-\alpha} \rceil+1$.
  We derive
  $\overline{k} \in \mathbb{N}$
  below such that
  $\overline{k} \geq \underline{k}$
  and
  $\min_{\underline{k}\leq n \leq \overline{k}} \norm{\nabla (\cost_{n}\circ\param)(\bm{y}_{n})} < \epsilon$.
  By~\eqref{eq:convergence_rate}, we have
  \begin{equation}
    \min_{\underline{k}\leq n \leq \overline{k}} \norm{\nabla (\cost_{n}\circ\param)(\bm{y}_{n})}
    \leq \sqrt{\frac{\chi}{\tau\sum_{n=\underline{k}}^{\overline{k}}n^{-1/\alpha}}}
    \leq
    \sqrt{\frac{\chi(1-\alpha^{-1})}{\tau\left((\overline{k}+1)^{1-\alpha^{-1}} - \underline{k}^{1-\alpha^{-1}}\right)}},
  \end{equation}
  where we used
  $\sum_{n=\underline{k}}^{\overline{k}}n^{-1/\alpha} \geq \frac{(\overline{k}+1)^{1-\alpha^{-1}}-\underline{k}^{1-\alpha^{-1}}}{1-\alpha^{-1}}$
  by a discussion similar to~\eqref{eq:delta_example}.
  Hence,
  $\min_{\underline{k}\leq n \leq \overline{k}} \norm{\nabla (\cost_{n}\circ\param)(\bm{y}_{n})} < \epsilon$
  holds if
  $\overline{k}$
  satisfies
  $\sqrt{\frac{\chi(1-\alpha^{-1})}{\tau\left((\overline{k}+1)^{1-\alpha^{-1}} - \underline{k}^{1-\alpha^{-1}}\right)}} < \epsilon$,
  or equivalently
  \begin{equation}
    \overline{k}
    > \left(\chi\tau^{-1}(1-\alpha^{-1})\epsilon^{-2}  + \underline{k}^{1-\alpha^{-1}} \right)^{\frac{\alpha}{\alpha-1}} - 1.
    \label{eq:lower_iteration}
  \end{equation}
  In the following, we derive an upper bound, with
  $\epsilon$,
  of the RHS in~\eqref{eq:lower_iteration}.
  Since
  $(a+b)^{p} \leq a^{p} + b^{p}\ (p \in (0,1), a,b\in \mathbb{R}_{++})$
  holds\footnote{
  $(a+b)^{p} \leq a^{p} + b^{p}\ (p \in (0,1), a,b\in \mathbb{R}_{++})$
  is verified as follows.
  By considering
  $t = b/a > 0$,
  it suffices to show
  $(1+t)^{p} \leq 1 + t^{p}$
  for all
  $t \in \mathbb{R}_{++}$,
  or equivalently
  $\theta(t)\coloneqq 1+t^{p} - (1+t)^{p} \geq 0$
  for all
  $t \in \mathbb{R}_{++}$.
  By
  $\theta'(t) = p(t^{p-1}-(1+t)^{p-1}) > 0\ (t \in \mathbb{R}_{++})$
  from
  $p \in (0,1)$,
  $\theta$
  is monotonically increasing.
  From
  $\theta(0) = 0$,
  we get
  $\theta(t) > 0$
  for all
  $t \in \mathbb{R}_{++}$.
  },
  we get
  $\underline{k}^{1-\alpha^{-1}}
    = (\lceil \tau^{\alpha} \epsilon^{-\alpha} \rceil+1)^{1-\alpha^{-1}}
    \leq (\tau^{\alpha} \epsilon^{-\alpha} + 2)^{1-\alpha^{-1}}
    \leq (\tau^{\alpha} \epsilon^{-\alpha})^{1-\alpha^{-1}} + 2^{1-\alpha^{-1}}
    \leq \tau^{\alpha-1}\epsilon^{1-\alpha} + 2$.
  From
  $\max\{\epsilon^{-2}, \epsilon^{1-\alpha}, 1\} \leq \epsilon^{\min\{-2,1-\alpha\}}$
  by
  $\epsilon \in (0,1)$,
  we get
  $\underline{k}^{1-\alpha^{-1}} \leq (\tau^{\alpha-1} + 2)\epsilon^{1-\alpha}$,
  and
    {
      \thickmuskip=0.5\thickmuskip
      \medmuskip=0.5\medmuskip
      \thinmuskip=0.5\thinmuskip
      \arraycolsep=0.5\arraycolsep
      \begin{align}
         & \left(\chi\tau^{-1}(1-\alpha^{-1})\epsilon^{-2}  + \underline{k}^{1-\alpha^{-1}}\right)^{\frac{\alpha}{\alpha-1}}
        \leq \left(\left(\chi\tau^{-1}(1-\alpha^{-1}) + \tau^{\alpha-1}+2\right)\epsilon^{\min\{-2,1-\alpha\}}\right)^{\frac{\alpha}{\alpha-1}}                                                                  \\
         & \leq \left\lceil\left(\chi\tau^{-1}(1-\alpha^{-1}) + \tau^{\alpha-1} + 2\right)^{\frac{\alpha}{\alpha-1}} \epsilon^{\min\left\{\frac{-2\alpha}{\alpha-1},-\alpha\right\}}\right\rceil\eqqcolon n_{0}.
      \end{align}}%
  Hence,~\eqref{eq:lower_iteration} is achieved by
  $\overline{k}\coloneqq n_{0}$,
  and thus we get
  $\min_{\underline{k}\leq n \leq n_{0}} \norm{\nabla (\cost_{n}\circ\param)(\bm{y}_{n})} < \epsilon$,
  i.e.,
  there exists
  $\widehat{n} \in [\underline{k},n_{0}]\cap \mathbb{N}$
  such that
  $\norm{\nabla (\cost_{\widehat{n}}\circ\param)(\bm{y}_{\widehat{n}})} < \epsilon$.
  Since
  $\mu_{n} < \epsilon$
  holds for all
  $n \geq \underline{k}$,
  the conditions
  $\norm{\nabla (\cost_{n}\circ\param)(\bm{y}_{n})} < \epsilon$
  and
  $\mu_{n} < \epsilon$
  are jointly achieved within at most
  $n_{0}$
  iterations.
\end{proof}

To derive a first-order oracle complexity of Algorithm~\ref{alg:vsmooth}, we introduce the definitions of the first-order oracle
and an $\epsilon$-approximate stationary point, e.g.,~\cite{Garmanjani-Vicente13,Bian-Chen13,Zhang-Chen-Ma23}, for smoothing-type algorithms.

\begin{definition} \label{definition:complexity}
  Consider Problem~\ref{problem:origin} with
  $\cost^{\langle \mu \rangle} = h+\moreau{g}{\mu}\circ\mathfrak{S}$.
  \begin{enumerate}[label=(\alph*)]
    \item \label{enum:definition:complexity:oracle}
          For given
          $\bm{x},\bm{u} \in \mathcal{X}$,
          $\bm{y} \in \mathcal{Y}$,
          and
          $\bm{z},\bm{v} \in \mathcal{Z}$,
          and
          $\mu \in (0,\eta^{-1})$,
          the {\em first-order oracle} returns
          $h(\bm{x})$,
          $\nabla h(\bm{x})$,
          $\mathfrak{S}(\bm{x})$,
          $(\mathrm{D}\mathfrak{S}(\bm{x}))^{*}[\bm{v}]$,
          $\param(\bm{y})$
          $(\mathrm{D}\param(\bm{y}))^{*}[\bm{u}]$,
          $g(\bm{z})$,
          and
          $\prox{\mu g}(\bm{z})$
          (Note:
          one evaluation of
          $\cost^{\langle\widebar{\mu}\rangle}\circ\param(\widebar{\bm{y}})$ and $\nabla(\cost^{\langle\widebar{\mu}\rangle}\circ\param)(\widebar{\bm{y}})$
          for
          $\widebar{\bm{y}} \in \mathcal{Y}$
          and
          $\widebar{\mu} \in (0,\eta^{-1})$
          can be implemented with at most five oracle calls\footnote{
            By noting that $*$ denotes an arbitrary value,
            consider the following five oracle calls with
            \begin{enumerate}[label=(\roman*)]
              \thickmuskip=0.3\thickmuskip
              \medmuskip=0.3\medmuskip
              \thinmuskip=0.3\thinmuskip
              \arraycolsep=0.3\arraycolsep
              \item
                    $(\bm{x},\bm{u},\bm{y},\bm{z},\bm{v},\mu) = (*, *, \widebar{\bm{y}},*,*,*)$
                    to obtain
                    $\widebar{\bm{x}} \coloneqq \param(\widebar{\bm{y}}) \in\mathcal{X}$;
              \item
                    $(\bm{x},\bm{u},\bm{y},\bm{z},\bm{v},\mu) = (\widebar{\bm{x}}, *, *,*,*,*)$
                    to obtain
                    $\widebar{\xi}_{\rm cost} \coloneqq h(\widebar{\bm{x}})\in\mathbb{R}$,
                    $\widebar{\bm{\xi}}_{\rm grad} \coloneqq \nabla h(\widebar{\bm{x}}) \in \mathcal{X}$,
                    $\widebar{\bm{z}} \coloneqq \mathfrak{S}(\widebar{\bm{x}}) \in \mathcal{Z}$;
              \item
                    $(\bm{x},\bm{u},\bm{y},\bm{z},\bm{v},\mu) = (*, *, *,\widebar{\bm{z}},*,\widebar{\mu})$
                    to obtain
                    $\widebar{\bm{p}} \coloneqq  \prox{\widebar{\mu} g}(\widebar{\bm{z}}) \in \mathcal{Z}$;
              \item
                    $(\bm{x},\bm{u},\bm{y},\bm{z},\bm{v},\mu) = (\widebar{\bm{x}}, *, *,\widebar{\bm{p}},(\widebar{\bm{z}} - \widebar{\bm{p}})/\widebar{\mu},*)$
                    to obtain
                    $\widebar{\zeta}_{\rm cost} \coloneqq g(\widebar{\bm{p}}) \in \mathbb{R}$,
                    $\widebar{\bm{\zeta}}_{\rm grad}\coloneqq \left(\mathrm{D}\mathfrak{S}(\widebar{\bm{x}})\right)^{*}[(\widebar{\bm{z}} - \widebar{\bm{p}})/\widebar{\mu}] \in \mathcal{X}$;
              \item
                    $(\bm{x},\bm{u},\bm{y},\bm{z},\bm{v},\mu) = (*, \widebar{\bm{\xi}}_{\rm grad} + \widebar{\bm{\zeta}}_{\rm grad}, \widebar{\bm{y}},* ,*,*)$
                    to obtain
                    $\nabla (\cost^{\langle \widebar{\mu} \rangle}\circ\param)(\widebar{\bm{y}})=(\mathrm{D}\param(\widebar{\bm{y}}))^{*}[\widebar{\bm{\xi}}_{\rm grad} + \widebar{\bm{\zeta}}_{\rm grad}] \in \mathcal{Y}$,
            \end{enumerate}
            where we can compute
            $\cost^{\langle \widebar{\mu}\rangle} \circ \param(\widebar{\bm{y}})=\widebar{\xi}_{\rm cost} + \widebar{\zeta}_{\rm cost} + \frac{1}{2\widebar{\mu}}\norm{\widebar{\bm{p}}-\widebar{\bm{z}}}^{2} \in \mathbb{R}$
            (Compare with the expression~\eqref{eq:expression_cost_gradient}).
          }).
    \item \label{enum:definition:complexity:approximate_stationarity}
          For a given
          $\epsilon > 0$,
          a point
          $\widehat{\bm{y}} \in \mathcal{Y}$
          is said to be an {\em $\epsilon$-approximate stationary point} of Problem~\ref{problem:origin}
          if
          $\norm{\nabla (\cost^{\langle \mu \rangle}\circ\param)(\widehat{\bm{y}})} \leq \epsilon$
          holds for some
          $\mu \in (0,\epsilon]$
  \end{enumerate}
\end{definition}

Compared with the oracle in~\cite{Xu-Jian-Liu-So25} proposed for Problem~\ref{problem:constrained} with
$g$
convex, our first-order oracle returns additionally the values
$\param(\bm{y})$
and its derivative
$(\mathrm{D}\param(\bm{y}))^{*}[\bm{u}]$
for
$\bm{y} \in \mathcal{Y}$
and
$\bm{u} \in \mathcal{X}$,
where an evaluation of
$\param$
corresponds to that of
a {\em retraction} used in~\cite{Xu-Jian-Liu-So25} for mapping a tangent vector to a point in the manifold
$C$
(see just after~\eqref{eq:retraction} for a retraction).
Moreover,
our oracle is essentially the same one considered in~\cite{Bohm-Wright21} for Problem~\ref{problem:origin} in a special case where
$\param=\Id$
and
$\mathfrak{S}$
is linear.

Our approximate stationarity notion for Problem~\ref{problem:origin} in Definition~\ref{definition:complexity}~\ref{enum:definition:complexity:approximate_stationarity} is not directly comparable to the notions used in, \cite{Peng-Wu-Hu-Deng23,Beck-Rosset23,Xu-Jian-Liu-So25} and~\cite{Chen-Ma-Man-Anthony-Zhang20,Wang-Liu-Chen-Ma-Xue-Zhao22},
for Problem~\ref{problem:constrained}.
Instead, our approximate stationarity reduces to another notion of approximate stationarity
used in~\cite{Bohm-Wright21} for Problem~\ref{problem:origin} with
$\param=\Id$
and
$\mathfrak{S}$
linear
(see Remark~\ref{remark:approximate_stationarity}).

\begin{remark}[Relation between approximate stationarities in Definition~\ref{definition:complexity}~\ref{enum:definition:complexity:approximate_stationarity} and in~\cite{Bohm-Wright21}] \label{remark:approximate_stationarity}
  For Problem~\ref{problem:origin},
  another type of an $\epsilon$-approximate stationary point can be defined as a point
  $\widehat{\bm{y}} \in \mathcal{Y}$
  satisfying
  \begin{equation}
    \exists \bm{z} \in \mathcal{Z}\
    \mathrm{s.t.}\
    \dist\left(-\nabla (h\circ\param)(\widehat{\bm{y}}),(\mathrm{D}(\mathfrak{S}\circ \param)(\widehat{\bm{y}}))^{*}\left[\Lsubdiff g(\bm{z})\right]\right)\leq \epsilon\ \mathrm{and}\ \norm{\bm{z} - \mathfrak{S}\circ \param(\widehat{\bm{y}})} \leq \epsilon.\label{eq:approximate_stationary}
  \end{equation}
  In a special case where
  $\param\coloneqq \Id$
  and
  $\mathfrak{S}$
  is linear,
  a variable smoothing algorithm in~\cite{Bohm-Wright21} can find an $\epsilon$-approximate stationary point in the sense of~\eqref{eq:approximate_stationary} within
  $\mathcal{O}(\epsilon^{-3})$
  oracle calls.
  In fact, our approximate stationary point in Definition~\ref{definition:complexity}~\ref{enum:definition:complexity:approximate_stationarity} implies an approximate stationary point in the sense of~\eqref{eq:approximate_stationary} as follows.
  Let
  $\widehat{\bm{y}} \in \mathcal{Y}$
  be an $\epsilon$-approximate stationary point in Definition~\ref{definition:complexity}~\ref{enum:definition:complexity:approximate_stationarity},
  i.e.,
  $\norm{\nabla (\cost^{\langle \mu \rangle}\circ\param)(\widehat{\bm{y}})} \leq \epsilon$
  with some
  $\mu \in (0,\epsilon]$.
  Then, we have
  \begin{align}
         & \dist\left(-\nabla (h\circ\param)(\widehat{\bm{y}}),(\mathrm{D}(\mathfrak{S}\circ \param)(\widehat{\bm{y}}))^{*}\left[\Lsubdiff g\left(\prox{\mu g}\left(\mathfrak{S}\circ\param(\widehat{\bm{y}})\right)\right)\right]\right)                                      \\
    \leq & \dist(- \nabla (h\circ\param)(\widehat{\bm{y}}), (\mathrm{D}(\mathfrak{S}\circ \param)(\widehat{\bm{y}}))^{*}[\nabla \moreau{g}{\mu}(\mathfrak{S}\circ \param(\widehat{\bm{y}}))]) \quad (\because \mathrm{Fact}~\ref{fact:moreau}~\ref{enum:prox_subdifferential}) \\
    =    & \norm{\nabla (h\circ\param)(\widehat{\bm{y}}) + (\mathrm{D}(\mathfrak{S}\circ \param)(\widehat{\bm{y}}))^{*}[\nabla \moreau{g}{\mu}(\mathfrak{S}\circ \param(\widehat{\bm{y}}))]}
    = \norm{\nabla (\cost^{\langle \mu \rangle}\circ\param)(\widehat{\bm{y}})} \leq \epsilon.
  \end{align}
  Moreover,
  with the Lipschitz constant
  $L_{g}$
  of
  $g$,
  $\norm{\prox{\mu g}\left(\mathfrak{S}\circ\param(\widehat{\bm{y}})\right) - \mathfrak{S}\circ \param(\widehat{\bm{y}})} \leq L_{g}\mu$
  holds
  from
  $\norm{\prox{\mu g}(\widebar{\bm{z}}) - \widebar{\bm{z}}} \leq L_{g} \mu\ (\widebar{\bm{z}} \in \mathcal{Z})$
  (see Fact~\ref{fact:moreau}~\ref{enum:gradient_moreau} and~\eqref{eq:moreau_gradient_bound}).
  Hence,
  $\widehat{\bm{y}}$
  is a $(\max\{1,L_{g}\}\epsilon)$-approximate stationary point in the sense of~\eqref{eq:approximate_stationary}.
\end{remark}

In Corollary~\ref{corollary:complexity},
we derive the first-order complexity, i.e., the number of oracle calls,
$\mathcal{O}(\epsilon^{-3})$
of Algorithm~\ref{alg:vsmooth}
for finding an
$\epsilon$-approximate stationary point of Problem~\ref{problem:origin}.
We note that, under a special case where
$\param = \Id$
and
$\mathfrak{S}$
is linear,
Corollary~\ref{corollary:complexity} recovers the oracle complexity
$\mathcal{O}(\epsilon^{-3})$
of the existing variable smoothing algorithm~\cite{Bohm-Wright21}
(see Remark~\ref{remark:approximate_stationarity} and Table~\ref{table:nonsmooth} in Section~\ref{sec:introduction}),
whereas the proposed algorithm admits the nonlinearity of
$\mathfrak{S}$.

\begin{corollary}[First-order oracle complexity of Algorithm~\ref{alg:vsmooth}] \label{corollary:complexity}
  Consider Problem~\ref{problem:origin}.
  Choose arbitrarily
  $\bm{y}_{1} \in \mathcal{Y}$,
  $c \in (0,1)$,
  and
  $\mu_{n} \coloneqq \tau n^{-1/3}\ (n\in\mathbb{N})$
  with
  $\tau \in (0,(2\eta)^{-1}]$.
  Then, under Assumption~\ref{assumption:Lipschitz},
  for any given
  $\epsilon > 0$,
  Algorithm~\ref{alg:vsmooth} can find an $\epsilon$-approximate stationary point of Problem~\ref{problem:origin}
  (i) with at most
  $\mathcal{O}(\epsilon^{-3})$
  first-order oracle calls if
  $\gamma_{n} \coloneqq 2(1-c)L_{\nabla (\cost_{n}\circ \param)}^{-1}\ (n\in\mathbb{N})$
  (see~\eqref{eq:surrogate_Lipschitz} and Example~\ref{ex:stepsize}~\ref{enum:ex:stepsize:Lipschitz}),
  and
  (ii)
  with at most
  $\mathcal{O}(\epsilon^{-3}\log(\epsilon^{-1}))$
  first-order oracle calls if
  $\gamma_{n}$
  is given by Algorithm~\ref{alg:backtracking}
  with
  $\rho \in (0,1)$
  and
  $\gamma_{\rm init} \in \mathbb{R}_{++}$
  (see Example~\ref{ex:stepsize}~\ref{enum:ex:stepsize:backtracking}).
\end{corollary}
\begin{proof}
  Recall that Algorithm~\ref{alg:vsmooth} can find an $\epsilon$-approximate stationary point within
  $\mathcal{O}(\epsilon^{-3})$
  iterations by Theorem~\ref{theorem:convergence_extension}~\ref{enum:convergence:rate} with
  $\alpha\coloneqq 3$.
  Hence, for the case (i)
  $\gamma_{n} \coloneqq 2(1-c)L_{\nabla (\cost_{n}\circ \param)}^{-1}\ (n\in\mathbb{N})$,
  Algorithm~\ref{alg:vsmooth} can find
  an $\epsilon$-approximate stationary point with at most
  $\mathcal{O}(\epsilon^{-3})$
  first-order oracle calls because
  $\cost_{n} \circ \param$
  and
  $\nabla (\cost_{n}\circ \param)$
  are evaluated only once at each iteration respectively (see also Example~\ref{ex:stepsize}~\ref{enum:ex:stepsize:Lipschitz}).

  For the case (ii),
  at $n$th iteration of Algorithm~\ref{alg:vsmooth},
  Algorithm~\ref{alg:backtracking} for estimating stepsize
  $\gamma_{n}$
  terminates within
  $\max\left\{1, \left\lceil \log_{\rho}\left(\frac{2\mu_{n}(1-c)}{\gamma_{\rm init}(\mu_{n}\varpi_{1}+\varpi_{2})}\right)\right\rceil\right\}$
  updates (see Example~\ref{ex:stepsize}~\ref{enum:ex:stepsize:backtracking}).
  Since
  $\cost_{n} \circ \param$
  is computed once at each update of
  $\gamma_{n}$
  in Algorithm~\ref{alg:backtracking},
  $\cost_{n} \circ \param$
  is evaluated for
  $M_{n}\coloneqq \max\left\{1, \left\lceil \log_{\rho}\left(\frac{2\mu_{n}(1-c)}{\gamma_{\rm init}(\mu_{n}\varpi_{1}+\varpi_{2})}\right)\right\rceil\right\}+1$
  times at most in Algorithm~\ref{alg:backtracking} for estimating
  $\gamma_{n}$
  at $n$th iteration of Algorithm~\ref{alg:vsmooth}.
  Hence, the first-order oracle complexity can be obtained by evaluating an upper bound of
  $\sum_{n=1}^{\mathcal{O}(\epsilon^{-3})} M_{n}$.
  Since
  $\log_{\rho}(\cdot)$
  with
  $\rho \in (0,1)$
  is monotonically decreasing
  and
  $\mu_{n}=\tau n^{-1/3} \leq \tau\ (n\in\mathbb{N})$,
  we have
  $\log_{\rho}(\frac{2\mu_{n}(1-c)}{\gamma_{\rm init}(\mu_{n}\varpi_{1}+\varpi_{2})}) \leq
    \log_{\rho}(\frac{2\mu_{n}(1-c)}{\gamma_{\rm init}(\tau\varpi_{1}+\varpi_{2})})
    = \log_{\rho}(\frac{2\tau(1-c)n^{-1/3}}{\gamma_{\rm init}(\tau\varpi_{1}+\varpi_{2})}) = -\frac{1}{3}\log_{\rho}(n) + \Delta = -\frac{\log(n)}{3\log(\rho)}+\Delta$
  with some
  $\Delta \in \mathbb{R}$.
  From
  $\lceil t \rceil \leq t + 1 \leq \abs{t} + 1\ (t\in\mathbb{R})$,
  we get
  $M_{n} \leq \abs{\log_{\rho}(\frac{2\mu_{n}(1-c)}{\gamma_{\rm init}(\mu_{n}\varpi_{1}+\varpi_{2})})} + 1 + 1 \leq \frac{\log(n)}{3\abs{\log(\rho)}} + \abs{\Delta}+2$.
  From
  $\sum_{n=1}^{k}\log(n) \leq \sum_{n=1}^{k}\log(k) = k\log(k)\ (k\in\mathbb{N})$,
  we get
  $\sum_{n=1}^{k} M_{n} \leq \frac{k\log(k)}{3\abs{\log(\rho)}} + k(\abs{\Delta}+2)$.
  By substituting
  $k=\mathcal{O}(\epsilon^{-3})$,
  we get the first-order oracle complexity
  $\mathcal{O}(\epsilon^{-3}\log(\epsilon^{-3})) = \mathcal{O}(\epsilon^{-3}\log(\epsilon^{-1}))$.
\end{proof}

\section{Numerical experiments} \label{sec:experiment}
We demonstrate the efficacy of Algorithm~\ref{alg:vsmooth} with
$\gamma_{n}$
given by Algorithm~\ref{alg:backtracking}\footnote{
  In our preliminary experiments, we examined that Algorithm~\ref{alg:vsmooth} with
  $\gamma_{n}$
  given by Algorithm~\ref{alg:backtracking} achieves faster convergence than Algorithm~\ref{alg:vsmooth} with
  $\gamma_{n}$
  in~\eqref{eq:step_Lipschitz}.
  Therefore, in the numerical experiments, we demonstrate the efficacy of Algorithm~\ref{alg:vsmooth} with
  $\gamma_{n}$
  given by Algorithm~\ref{alg:backtracking}.
} by numerical experiments in scenarios of two applications: (i) the sparse spectral clustering (SSC) and (ii) the sparse principal component analysis (SPCA).
In the SSC application (Section~\ref{sec:SSC}), we present a new formulation of SSC, via Problem~\ref{problem:constrained} with a weakly convex and smooth composition, and demonstrate how this new formulation improves estimation performance of the existing SSC algorithm.
In the SPCA application (Section~\ref{sec:SPCA}), we demonstrate the efficient numerical performance of the proposed Algorithm~\ref{alg:vsmooth} by comparing its empirical results, e.g., convergence speed, with those of fairly standard Riemmannian nonsmooth optimization algorithms~\cite{Beck-Rosset23,Li-Chen-Deng-Qu-Zhu-Man21,Chen-Ma-Man-Anthony-Zhang20}
(Note:
the proposed Algorithm~\ref{alg:vsmooth} and the standard algorithms~\cite{Beck-Rosset23,Li-Chen-Deng-Qu-Zhu-Man21,Chen-Ma-Man-Anthony-Zhang20} are applicable to the SPCA problem).
All experiments were performed by MATLAB on MacBookPro (Apple M3, 16GB).

Both applications are reduced to nonsmooth optimization problems over an embedded submanifold, say the Stiefel manifold
$\St(p,N) \coloneqq  \{\bm{U} \in \mathbb{R}^{N\times p} \mid \bm{U}^{\TT}\bm{U} = \bm{I}_{p}\}\ (p \leq N)$,
of
$\mathbb{R}^{N\times p}$~\cite[Section 3.3.2]{Absil-Mahony-Sepulchre08}.
In order to apply Algorithm~\ref{alg:vsmooth} to such problems, we employ the generalized inverse Cayley transform, denoted by
$\Phi_{\bm{S}}^{-1}$
in~\cite{Kume-Yamada22}, as a parametrization
$\param $
of
$\St(p,N)$:
\begin{equation}
  \param_{\bm{S}}^{\rm Cay}:(\mathcal{Y}\coloneqq )Q_{N,p}\to \St(p,N)\setminus E_{N,p}(\bm{S}):\bm{V}\mapsto \bm{S}(\bm{I}-\bm{V})(\bm{I}+\bm{V})^{-1}\bm{I}_{N\times p} \label{eq:Cayley}
\end{equation}
with
$\bm{S} \in {\rm O}(N)\coloneqq \St(N,N)$,
where
\begin{equation} \label{eq:skew}
  Q_{N,p}\coloneqq \left\{
  \left.\begin{bmatrix} \bm{A} & -\bm{B}^{\TT} \\ \bm{B} & \bm{0} \end{bmatrix}
  \in \mathbb{R}^{N\times N}\;\right|\;\substack{ -\bm{A}^{\TT} = \bm{A} \in \mathbb{R}^{p\times p},\\ \bm{B} \in \mathbb{R}^{(N-p)\times p}}\right\}
  \equiv \mathbb{R}^{Np-p(p+1)/2}
\end{equation}
is the set of skew-symmetric matrices,
$E_{N,p}(\bm{S})\coloneqq  \{\bm{U} \in \St(p,N) \mid \det(\bm{I}_{p} + \bm{I}_{N\times p}^{\TT}\bm{S}^{\TT}\bm{U}) = 0\}$
is called the {\em singular-point set}, and
$\bm{I}_{N\times p} \in \mathbb{R}^{N\times p}$
is the first $p$ column vectors of the
$N$-by-$N$
identity matrix
$\bm{I}$.
Although the image
$\param_{\bm{S}}^{\rm Cay}(Q_{N,p})=\St(p,N)\setminus E_{N,p}(\bm{S})$
is a proper subset of
$\St(p,N)$,
$\St(p,N)\setminus E_{N,p}(\bm{S})$
is open\footnote{
  There exists an open subset
  $O \subset \mathbb{R}^{N\times p}$
  in the Euclidean space
  $\mathbb{R}^{N\times p}$
  such that
  $\St(p,N) \setminus E_{N,p}(\bm{S}) = O\cap \St(p,N)$.
} and dense\footnote{
  For any
  $\bm{U} \in \St(p,N)$,
  there exists
  $(\bm{U}_{n})_{n=1}^{\infty} \subset \St(p,N) \setminus E_{N,p}(\bm{S})$
  such that
  $\lim_{n\to \infty} \bm{U}_{n} = \bm{U}$.
} in
$\St(p,N)$~\cite[Thm. 2.3 (b)]{Kume-Yamada22},
implying thus almost all points in
$\St(p,N)$
can be parameterized in terms of the Euclidean space
$Q_{N,p}$
with
$\param_{\bm{S}}^{\rm Cay}$.

Corollary~\ref{corollary:Cayley} below implies that
$\param_{\bm{S}}^{\rm Cay}$
possesses well-suited properties for its application
to Algorithm~\ref{alg:vsmooth} as a parametrization
$\param $.
More precisely, with
$C\coloneqq \St(p,N)$
and
$\param \coloneqq \param_{\bm{S}}^{\rm Cay}$,
(i) Corollary~\ref{corollary:Cayley}~\ref{enum:Cayley_necessary} implies the equivalence of the first-order optimality conditions for Problem~\ref{problem:constrained} and Problem~\ref{problem:origin};
(ii)
Corollary~\ref{corollary:Cayley}~\ref{enum:Cayley_assumption} and Theorem~\ref{theorem:convergence_extension} imply that Algorithm~\ref{alg:vsmooth} is guaranteed to approximate a stationary point of Problem~\ref{problem:origin}.

\begin{corollary}[Properties of $\param_{\bm{S}}^{\rm Cay}$ in~\eqref{eq:Cayley} as parametrization of $\St(p,N)$] \label{corollary:Cayley}
  Consider Problem~\ref{problem:constrained} with
  $C\coloneqq  \St(p,N)$.
  Let
  $\param \coloneqq \param_{\bm{S}}^{\rm Cay}:(\mathcal{Y}\coloneqq )Q_{N,p}\to \St(p,N)\setminus E_{N,p}(\bm{S})$
  in~\eqref{eq:Cayley}
  with arbitrarily chosen
  $\bm{S} \in {\rm O}(N)$.
  Then, the following hold:
  \begin{enumerate}[label=(\alph*)]
    \item \label{enum:Cayley_necessary}
          For
          $\bm{V}^{\star} \in Q_{N,p}$
          and
          $\bm{U}^{\star}\coloneqq  \param (\bm{V}^{\star}) = \param_{\bm{S}}^{\rm Cay}(\bm{V}^{\star}) \in \St(p,N) \setminus E_{N,p}(\bm{S})$,
          \begin{equation}
            \Lsubdiff  (h+g\circ \mathfrak{S} + \iota_{C})(\bm{U}^{\star}) \ni \bm{0}
            \Leftrightarrow \Lsubdiff  ((h+g\circ \mathfrak{S})\circ \param )(\bm{V}^{\star}) \ni \bm{0}.
          \end{equation}
    \item \label{enum:Cayley_assumption}
          If
          $\mathfrak{S}$
          is twice continuously differentiable, then Assumption~\ref{assumption:Lipschitz}~\ref{enum:gradient_Lipschitz} with
          $C\coloneqq \St(p,N)$
          and
          $\param \coloneqq \param_{\bm{S}}^{\rm Cay}$
          is satisfied.
  \end{enumerate}
\end{corollary}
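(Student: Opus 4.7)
The plan is to obtain both parts as applications of the general results already established in the paper, so that this corollary becomes essentially a verification that the pair $(C,F) = (\St(p,N), \Phi_{\bm{S}}^{-1})$ fits the hypotheses.

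For part~\ref{enum:Cayley_necessary}, I would apply Corollary~\ref{corollary:optimality_C_F}, together with Remark~\ref{remark:not_surjective} to cover the non-surjective situation $F(Q_{N,p}) = \St(p,N)\setminus E_{N,p}(\bm{S}) \subsetneq \St(p,N)$. Two conditions must be verified: (i) $\St(p,N)$ is an embedded submanifold of $\mathbb{R}^{N\times p}$, which is classical; (ii) $F = \Phi_{\bm{S}}^{-1}$ is a submersion at the given $\bm{V}^{\star}$. For (ii) I would invoke the known fact (from~\cite{Kume-Yamada22}) that $\Phi_{\bm{S}}^{-1}$ is a diffeomorphism from $Q_{N,p}$ onto $\St(p,N)\setminus E_{N,p}(\bm{S})$; in particular it is a local diffeomorphism at every $\bm{V}^{\star} \in Q_{N,p}$, so by the note immediately following Corollary~\ref{corollary:optimality_C_F} its image of the differential coincides with the tangent space $T_{\St(p,N)}(\bm{U}^{\star})$. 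The stated equivalence then reads off directly from Corollary~\ref{corollary:optimality_C_F}.

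For part~\ref{enum:Cayley_assumption}, I would apply Proposition~\ref{proposition:extension:Lipschitz_f}~\ref{enum:Lipschitz}, which reduces Assumption~\ref{assumption:Lipschitz}~\ref{enum:gradient_Lipschitz} to the five conditions \ref{enum:bounded_DG}--\ref{enum:Lipschitz_DF}. Compactness of $\St(p,N)$ (hence of $\mathrm{Conv}(\St(p,N))$) combined with $\mathfrak{S}$ being twice continuously differentiable and $h$ being continuously differentiable immediately supplies \ref{enum:bounded_DG}, \ref{enum:Lipschitz_DG}, and \ref{enum:bounded_gradh} via Remark~\ref{remark:assumption}~\ref{enum:S}. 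The remaining conditions \ref{enum:bounded_DF} and \ref{enum:Lipschitz_DF} are statements about $F = \Phi_{\bm{S}}^{-1}$ over the entire (non-compact) Euclidean space $\mathcal{Y} = Q_{N,p}$, and these are exactly what the preparatory Lemma~\ref{lemma:Cayley_Lipschitz} is designed to furnish. Once all five conditions are in hand, Proposition~\ref{proposition:extension:Lipschitz_f}~\ref{enum:Lipschitz} delivers the Lipschitz constant of the form $L_{\nabla(f_{n}\circ F)} = \varpi_{1} + \varpi_{2}\mu_{n}^{-1}$ required by Assumption~\ref{assumption:Lipschitz}~\ref{enum:gradient_Lipschitz}.

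The main obstacle in this program is the verification of \ref{enum:bounded_DF} and \ref{enum:Lipschitz_DF} for $\Phi_{\bm{S}}^{-1}$: since $Q_{N,p}$ is unbounded, one cannot simply appeal to continuity of the derivative on a compact domain. Instead one must exploit the specific algebraic form $\Phi_{\bm{S}}^{-1}(\bm{V}) = \bm{S}(\bm{I}-\bm{V})(\bm{I}+\bm{V})^{-1}\bm{I}_{N\times p}$, together with the skew-symmetry of $\bm{V} \in Q_{N,p}$ which guarantees $\bm{I}+\bm{V}$ is invertible with $\norm{(\bm{I}+\bm{V})^{-1}}_{\rm op}\leq 1$. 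The resulting cancellations keep $(\mathrm{D}\Phi_{\bm{S}}^{-1}(\bm{V}))^{*}$ and its difference quotient uniformly controlled even as $\norm{\bm{V}}\to\infty$; this quantitative analysis is the substance of Lemma~\ref{lemma:Cayley_Lipschitz}, after which the present corollary follows routinely.
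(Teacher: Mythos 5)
Your proposal is correct and follows essentially the same route as the paper's own proof: part~\ref{enum:Cayley_necessary} via Corollary~\ref{corollary:optimality_C_F} (with Remark~\ref{remark:not_surjective} for the non-surjectivity of $\Phi_{\bm{S}}^{-1}$ and the diffeomorphism property from~\cite[Prop.~2.2]{Kume-Yamada22} supplying the submersion condition), and part~\ref{enum:Cayley_assumption} via Proposition~\ref{proposition:extension:Lipschitz_f}~\ref{enum:Lipschitz}, with \ref{enum:bounded_DG}--\ref{enum:bounded_gradh} from compactness of $\St(p,N)$ and Remark~\ref{remark:assumption}~\ref{enum:S}, and \ref{enum:bounded_DF}--\ref{enum:Lipschitz_DF} from Lemma~\ref{lemma:Cayley_Lipschitz}. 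You also correctly identify the key mechanism behind Lemma~\ref{lemma:Cayley_Lipschitz}, namely the uniform bound $\norm{(\bm{I}+\bm{V})^{-1}}_{2}\leq 1$ on the unbounded domain $Q_{N,p}$, which is exactly how the paper handles it.
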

\begin{proof}
  See Appendix~\ref{appendix:Cayley}.
\end{proof}

\begin{remark}[Singular-point issue for minimizing $\cost\circ \param_{\bm{S}}^{\rm Cay}$]
  The procedure for minimization of
  $\cost\circ \param_{\bm{S}}^{\rm Cay}$
  with
  $\bm{S} \in {\rm O}(N)$
  fixed may suffer from slow convergence of updating iterates~\cite{Kume-Yamada22} in a case where a minimizer
  $\bm{U}^{\star} \in \St(p,N)$
  of
  $f:\mathbb{R}^{N\times p} \to \mathbb{R}$
  over
  $\St(p,N)$
  is close to the singular-point set
  $E_{N,p}(\bm{S})$.
  To suppress such a performance degradation called a singular-point issue, we recently proposed an adaptive parametrization strategy~\cite{Kume-Yamada23,Kume-Yamada24A} that updates
  $\bm{S}$
  adaptively so that
  $E_{N,p}(\bm{S})$
  is located far away from
  $\bm{U}^{\star}$,
  and its efficacy has been demonstrated numerically therein.
  This indicates that a similar adaptive parametrization strategy may improve the convergence speed of Algorithm~\ref{alg:vsmooth},
  however this improvement is beyond the scope of this paper,
  and thus will be addressed in future work.
\end{remark}

\subsection{Sparse Spectral Clustering (SSC)}\label{sec:SSC}
The spectral clustering (SC)~\cite{Ng-Michael-Weiss01} is one of powerful modern clustering algorithms, where the goal of clustering is to split given data
$(\bm{\xi}_{i})_{i=1}^{N} \subset \mathbb{R}^{d}$
into
$K \leq N$
groups without requiring labels of data.
The SC has been used not only for clustering on graphs~\cite{Dong-Frossard-Vandergheynst-Nefedov12} in graph signal processing, but also for single-cell RNA sequence~\cite{Wang-Liu-Chen-Ma-Xue-Zhao22}, remote sensing image analysis~\cite{Tasdemir-Yalcin-Yildirim15}, and detecting clusters in networks~\cite{Wang-Lin-Wang17}.

\begin{algorithm}[t]
  \caption{(Sparse) Spectral clustering}
  \label{alg:SC}
  \begin{algorithmic}[1]
    \Require
    Data
    $(\bm{\xi}_{i})_{i=1}^{N} \subset \mathbb{R}^{d}$.
    \State
    Construct an affinity matrix
    $\bm{W} \in \mathbb{R}^{N\times N}$
    whose entries
    $[\bm{W}]_{i,j} \geq 0$
    stands for the similarity between
    $\bm{\xi}_{i}$
    and
    $\bm{\xi}_{j}$.
    \State
    Compute the normalized Laplacian
    $\bm{L} \coloneqq \bm{I} - \bm{D}^{-1/2}\bm{W}\bm{D}^{-1/2} \in \mathbb{R}^{N\times N}$
    by regarding
    $\bm{W}$
    as the adjacency matrix of a certain graph
    $\mathcal{G}$,
    where
    $\bm{D} \in \mathbb{R}^{N\times N}$
    is the degree (diagonal) matrix of
    $\bm{W}$,
    i.e.,
    $[\bm{D}]_{i,i} = \sum_{j=1}^{N}[\bm{W}]_{i,j}$.
    \State
    Compute
    $\bm{U}^{\star} \in \St(K,N)$
    by solving~\eqref{eq:SC} (\eqref{eq:SSC} in case of Sparse Spectral Clustering).\label{lst:line:SC}
    \State
    Form
    $\widehat{\bm{U}}^{\star} \in \mathbb{R}^{N\times K}$
    by normalizing each row
    $\widehat{\bm{u}}^{\star}_{i} \in \mathbb{R}^{K}$
    of
    $\bm{U}^{\star}$
    to length
    $1$.
    \State \label{lst:line:clustering}
    Treat each row
    $\widehat{\bm{u}}^{\star}_{i} \in \mathbb{R}^{K}$
    of
    $\widehat{\bm{U}}^{\star}$
    as a feature vector of
    $\bm{\xi}_{i}$,
    and cluster
    $(\widehat{\bm{u}}_{i})_{i=1}^{N}$
    into
    $K$
    clusters by the k-means algorithm.
  \end{algorithmic}
\end{algorithm}

To split given data
$(\bm{\xi}_{i})_{i=1}^{N}$
into
$K$
groups, the SC is executed as in Algorithm~\ref{alg:SC} based on the graph theory (see, e.g.,~\cite{Ulrike07}).
The SC consists of two parts:
(i) constructing an affinity matrix
$\bm{W} \in \mathbb{R}^{N\times N}$
and an affinity graph
$\mathcal{G}$,
e.g., k-nearest neighborhood graph~\cite{Alshmmari-Stavrakakis-Takatsuka21}, of
$(\bm{\xi}_{i})_{i=1}^{N}$;
(ii) decomposing
$\mathcal{G}$
into
$K$
connected subgraphs based on a certain spectral behavior of the normalized Laplacian
$\bm{L} \coloneqq  \bm{I} - \bm{D}^{-1/2}\bm{W}\bm{D}^{-1/2}\in \mathbb{R}^{N\times N}$
of
$\mathcal{G}$,
where
$\bm{D} \in \mathbb{R}^{N\times N}$
is the degree matrix of
$\mathcal{G}$.
More precisely for (ii), the SC exploits the fact that
if
$\bm{L}$
has an eigenvalue
$0$
with the multiplicity
$K$,
then
$\mathcal{G}$
can be decomposed into
$K$
connected subgraphs by clustering the eigenvectors associated with the eigenvalue
$0$ (see, e.g.,~\cite{Ulrike07}).
By leveraging this fact, the SC computes the
$K$
smallest eigenvectors of
$L$,
in the third step of Algorithm~\ref{alg:SC}, which can be characterized as a minimizer of the following optimization problem:
\begin{equation}
  \mathrm{find}\ \bm{U}^{\star} \in \argmin_{\bm{U} \in \St(K,N)} \trace(\bm{U}^{\TT}\bm{L}\bm{U}). \label{eq:SC}
\end{equation}
In the fourth step of Algorithm~\ref{alg:SC}, we normalize each row vector, say
$\widehat{\bm{u}}^{\star}_{i} \in \mathbb{R}^{K}$,
of
$\bm{U}^{\star}$
as
$\norm{\widehat{\bm{u}}^{\star}_{i}}_{2} = 1$.
Finally, we cluster
$(\widehat{\bm{u}}^{\star}_{i})_{i=1}^{N}$
into
$K$
groups by using the k-means algorithm.

In order to improve the numerical performance of the SC, the sparse SC (SSC)~\cite{Lu-Yan-Lin16,Wang-Liu-Chen-Ma-Xue-Zhao22} exploits a prior knowledge that the affinity matrix
$\bm{W}$
and Laplacian
$\bm{L}$
can be block diagonal, in the ideal case where there is no edge between nodes in different groups, by using a certain permutation
$p:\{1,2,\ldots,N\}\to \{1,2,\ldots,N\}$
of indices of
$(\bm{\xi}_{i})_{i=1}^{N}$
as
\begin{equation}
  (\underbrace{\bm{\xi}_{p(1)},\bm{\xi}_{p(2)},\ldots, \bm{\xi}_{p(\mathcal{I}_{1})}}_{1\mathrm{st\ group}}, \underbrace{\bm{\xi}_{p(\mathcal{I}_{1}+1)},\ldots,\bm{\xi}_{p(\mathcal{I}_{1} + \mathcal{I}_{2})}}_{2\mathrm{nd\ group}},\ldots,\underbrace{\bm{\xi}_{p(\sum_{k=1}^{K-1}\mathcal{I}_{k}+1)},\ldots,\bm{\xi}_{p(N)}}_{K\mathrm{th\ group}}),
\end{equation}
where
$\mathcal{I}_{k} \in \mathbb{N}$
denotes the size of $k$th group.
The block diagonality of
$\bm{L}$
indicates the block diagonality of
$\bm{U}^{\star}\bm{U}^{\star\TT} \in \mathbb{R}^{N\times N}$
with
$\bm{U}^{\star} \in \St(K,N)$
in~\eqref{eq:SC}, thereby inducing the sparsity of
$\bm{U}^{\star}\bm{U}^{\star\TT} \in \mathbb{R}^{N\times N}$.
Although the above permutation is not available in general,
$\bm{U}^{\star}\bm{U}^{\star\TT}$
is expected to be sparse even if any permutation is employed.
Therefore, in the SSC,
the sparsity of
$\bm{U}^{\star}\bm{U}^{\star\TT}$
is promoted via solving the following problem in place of~\eqref{eq:SC}:
\begin{equation}
  \mathrm{find}\ \bm{U}^{\star} \in \argmin_{\bm{U} \in \St(K,N)} \trace(\bm{U}^{\TT}\bm{L}\bm{U}) + \lambda \psi \circ \mathfrak{S}(\bm{U}), \label{eq:SSC}
\end{equation}
where
$\lambda > 0$,
$\mathfrak{S}:\mathbb{R}^{N\times K} \to \mathbb{R}^{N\times N}: \bm{U}\mapsto \bm{U}\bm{U}^{\TT}$,
and
$\psi:\mathbb{R}^{N\times N}\to \mathbb{R}$
is a sparsity promoting function, e.g.,
$\ell_{1}$-norm, MCP, and SCAD (see, e.g.,~\cite{Bauschke-Combettes17,Chierchia-Chouzenoux-Combettes-Pesquet}).

By Corollary~\ref{corollary:Cayley}, the problem~\eqref{eq:SSC} can be reformulated as Problem~\ref{problem:origin}
with
$C\coloneqq \St(K,N)$,
$\param \coloneqq \param_{\bm{S}}^{\rm Cay}$
in~\eqref{eq:Cayley},
$h(\bm{U})\coloneqq  \trace(\bm{U}^{\TT}\bm{L}\bm{U})$
and
$g\coloneqq  \lambda \psi$:
\begin{equation}
  \mathrm{find}\ \bm{V}^{\star}\in \argmin_{\bm{V} \in Q_{N,K}} (\underbrace{h+g\circ \mathfrak{S}}_{\eqqcolon \cost})\circ \param_{\bm{S}}^{\rm Cay}(\bm{V}). \label{eq:SSC_parameterized}
\end{equation}
By employing a Lipschitz continuous and weakly convex function as
$\psi$,
we can find a stationary point of~\eqref{eq:SSC} by Algorithm~\ref{alg:vsmooth} (see Theorem~\ref{theorem:convergence_extension} with Corollary~\ref{corollary:Cayley}).
We note that (i) standard Riemannian nonsmooth optimization algorithms, e.g.,~\cite{Chen-Ma-Man-Anthony-Zhang20,Beck-Rosset23,Peng-Wu-Hu-Deng23}, cannot be applied to the problem~\eqref{eq:SSC} because
$\mathfrak{S}$
is nonlinear;
(ii) \cite{Wang-Liu-Chen-Ma-Xue-Zhao22} reported a prox-linear method for the problem~\eqref{eq:SSC} in a case where
$\psi$
is the convex $\ell_{1}$-norm.

In this numerical experiment, we evaluated the numerical performance of SSC via the formulation in~\eqref{eq:SSC_parameterized} by applying Algorithm~\ref{alg:vsmooth} with
$\gamma_{n}$
given by Algorithm~\ref{alg:backtracking}
under the setting (for a choice of $(\mu_{n})_{n=1}^{\infty}$, we follow the setting in the oracle complexity result; see Corollary~\ref{corollary:complexity})
\begin{equation}
  (\mu_{n})_{n=1}^{\infty} = ((2\eta)^{-1}n^{-1/3})_{n=1}^{\infty}, c = 2^{-13}, \rho = 0.5, \gamma_{\rm init} = \min\left\{1, \norm{\nabla (\cost_{1}\circ \param_{\bm{S}}^{\rm Cay})(\bm{V}_{1})}_{F}^{-1}\right\}, \label{eq:setting}
\end{equation}
where
$\norm{\cdot}_{F}$
is the Frobenius norm,
$\cost_{1}$
is defined as in Algorithm~\ref{alg:vsmooth}, and
$(\bm{S},\bm{V}_{1}) \in {\rm O}(N)\times Q_{N,K}$
was chosen\footnote{
By following~\cite[Thm. 2.7]{Kume-Yamada22}, a pair
$(\bm{S},\bm{V}_{1}) \in {\rm O}(N)\times Q_{N,K}$
satisfying
$\param_{\bm{S}}^{\rm Cay}(\bm{V}_{1}) = \bm{U}_{1}$
can be chosen as
$\bm{S}\coloneqq  \diag(\bm{Q}_{1}\bm{Q}_{2}^{\TT}, \bm{I}_{N-K})$
and
$\bm{V}_{1}\coloneqq \Phi_{\bm{S}}(\bm{U}_{1})$,
where
$\bm{Q}_{1},\bm{Q}_{2} \in {\rm O}(K)$
are obtained by
a singular-value decomposition of the upper block matrix
$\bm{U}_{0 {\rm up}}=\bm{Q}_{1}\bm{\Sigma}\bm{Q}_{2} \in \mathbb{R}^{K\times K}$
of
$\bm{U}_{1}$
with a nonnegative-valued diagonal matrix
$\bm{\Sigma} \in \mathbb{R}^{K\times K}$,
and
$\Phi_{\bm{S}}:\St(K,N)\setminus E_{N,K}(\bm{S}) \to Q_{N,K}$
is the generalized Cayley transform~\cite[(10)-(12)]{Kume-Yamada22} (Note:
$\Phi_{\bm{S}}\circ \param_{\bm{S}}^{\rm Cay} = \mathrm{Id}$
and
$\param_{\bm{S}}^{\rm Cay}\circ \Phi_{\bm{S}} = \mathrm{Id}$
hold respectively on their domains).
} to satisfy
$\param_{\bm{S}}^{\rm Cay}(\bm{V}_{1}) = \bm{U}_{1}$
for an initial guess
$\bm{U}_{1} \in \St(K,N)$.
In order to examine how much weakly convex regularizers can enhance the performance, we compared (i) the proposed SSC (SSC+MCP) by employing MCP~\cite{Zhang10}
\begin{align}
  (\bm{X}\in \mathbb{R}^{N\times N})\quad & \psi^{\rm MCP}_{\theta}(\bm{X})\coloneqq \sum_{1\leq i \leq N}\sum_{1\leq j \leq N} r^{\rm MCP}_{\theta}([\bm{X}]_{i,j}) \\
                                          & \mathrm{with}\
  r^{\rm MCP}_{\theta}([\bm{X}]_{i,j})\coloneqq
  \begin{cases}
    \abs{[\bm{X}]_{i,j}} - \frac{[\bm{X}]_{i,j}^{2}}{2\theta} & (\mathrm{if}\ \abs{[\bm{X}]_{i,j}}\leq \theta) \\
    \frac{\theta}{2}                                          & (\mathrm{otherwise})
  \end{cases} \label{eq:MCP}
\end{align}
with a parameter
$\theta > 0$
with (ii) the proposed SSC (SSC+$\ell_{1}$) by employing
$\ell_{1}$-norm
$\norm{\bm{X}}_{1}\coloneqq \sum_{1\leq i\leq N}\sum_{1\leq j\leq N}\abs{[\bm{X}]_{i,j}}\ (\bm{X} \in \mathbb{R}^{N\times N})$
as
$\psi$
(for the closed-form expressions of proximity operators of
$\norm{\cdot}_{1}$
and
$\psi^{\rm MCP}_{\theta}$,
see~\cite[Exm. 24.22]{Bauschke-Combettes17} and~\cite{Bohm-Wright21} respectively).
Since
$\psi^{\rm MCP}_{\theta}$
is $\theta^{-1}$-weakly convex, and
$\norm{\cdot}_{1}$
is convex, i.e.,
$\eta$-weakly convex with any
$\eta \in \mathbb{R}_{++}$,
we used
$\eta \coloneqq  \theta^{-1}$
for MCP, and
$\eta \coloneqq  1$
for
$\ell_{1}$-norm
in Algorithm~\ref{alg:vsmooth}.
Since MCP can alleviate underestimation of desired sparse target compared with $\ell_{1}$-norm (see, e.g.,~\cite{Selesnick17,Abe-Yamagishi-Yamada20,Yata-Yamagishi-Yamada22}), SSC+MCP is expected to achieve better performance than SSC+$\ell_{1}$.

Moreover, we compared (i) SSC+MCP and (ii) SSC+$\ell_{1}$ with (iii) the SC~\cite{Ng-Michael-Weiss01}, which solves the problem~\eqref{eq:SC}; (iv) the SSC~\cite{Lu-Yan-Lin16} denoted by SSC[relax]\footnote{
  We used codes in
  https://github.com/canyilu/LibADMM-toolbox.
}, which solves a certain convex relaxation of~\eqref{eq:SSC} as
\begin{equation}
  \mathrm{find}\ \bm{P}^{\star} \in \argmin_{\bm{P} \in \mathbb{R}^{N\times N}, \bm{0} \preceq \bm{P} \preceq \bm{I}, \trace(\bm{P}) = K} \trace(\bm{P}^{\TT}\bm{L}) + \lambda \norm{\bm{P}}_{1}, \label{eq:SSC_convex}
\end{equation}
where
$\{\bm{U}\bm{U}^{\TT} \in \mathbb{R}^{N\times N} \mid \bm{U}\in \St(K,N)\}$
is relaxed to the convex set
$\{\bm{P} \in \mathbb{R}^{N\times N} \mid \bm{0} \preceq \bm{P} \preceq \bm{I}, \trace(\bm{P}) = K\}$.
The problem~\eqref{eq:SSC_convex} can be solved by classical convex optimization algorithms, e.g., ADMM (see, e.g.,~\cite{Condat-Kitahara-Contreras-Hirabayashi23}).
In the SSC[relax]~\cite{Lu-Yan-Lin16},
$\bm{U}^{\star}$
in the third step of Algorithm~\ref{alg:SC} is obtained as first
$K$ eigenvectors corresponding to the largest
$K$
eigenvalues of
$\bm{P}^{\star}$
in~\eqref{eq:SSC_convex}.

To evaluate clustering performance, we computed two standard criteria: the Normalized Mutual Information (NMI)~\cite{Strehl-Ghosh03} and the Adjusted Rand Index (ARI)~\cite{Hubert-Arabie85}.
These scores closer to one indicate better clustering performance.

We evaluated performances of algorithms by using 7 real-world datasets in UCI Machine Learning Repository~\cite{Dua-Graff17}:
(i) ``iris''; (ii) ``shuttle'' (chosen $1500$ samples randomly); (iii) ``segmentation'';
(iv) ``breast cancer'';
(v) ``glass'';
(vi) ``wine'';
and (vii) ``seeds'',
where the number
$K$
of clusters for each dataset was assumed to be known.
For each dataset, we made an affinity matrix
$\bm{W} \in \mathbb{R}^{N\times N}$
of data
$(\bm{\xi}_{i})_{i=1}^{N}$
by following~\cite{Alshmmari-Stavrakakis-Takatsuka21}\footnote{
  We used codes in
  https://github.com/mashaan14/Spectral-Clustering.
} as the first step of Algorithm~\ref{alg:SC}.
For SSC[relax], SSC+$\ell_{1}$ and SSC+MCP, parameters
$\lambda$
in the problem~\eqref{eq:SSC_parameterized} and~\eqref{eq:SSC_convex}, and
$\theta$
for MCP in~\eqref{eq:MCP} were chosen from
$\{10^{-i} \mid i = 0,1,2,3,4,5,6\}$
to achieve the highest value
$(\mathrm{NMI}+\mathrm{ARI})/2$.
All algorithms except for SC were terminated when running CPU time exceeded
$120$ seconds or the iteration number exceeded
$10000$.

Table~\ref{table:SSC_nmi} shows the averaged NMI and ARI by running k-means algorithm in the fifth step of Algorithm~\ref{alg:SC} for $100$ times.
The values in bold indicate the best results among all algorithms.
We see that three SSCs almost achieve higher NMIs and ARIs than those of SC, implying thus three SSCs outperform SC.
We also observe that SSC+$\ell_1$ is competitive to SSC[relax].
Moreover, among all four algorithms, SSC+MCP achieves the highest NMIs (except for ``wine'') and ARIs as we expected.
In particular, for ``breast cancer'', SSC+MCP overwhelms the others.
From these results, SSC via the formulation in~\eqref{eq:SSC_parameterized}
with a weakly convex regularizer (MCP) has great potential to improve the numerical performance of SSC.

\begin{table}[!ht]
  \centering
  \footnotesize
  \caption{Averaged NMI and ARI of each algorithm.}
  \begin{tabular}{ccccccccc}
    \toprule
                         & ~            & iris      & shuttle   & segmentation & breast cancer & glass     & wine      & seeds     \\ \midrule
    \multirow{4}{*}{NMI} &
    SC                   & 0.778        & 0.435     & 0.501     & 0.417        & 0.321         & \bf 0.433 & 0.662                 \\
                         & SSC[relax]   & 0.785     & 0.485     & 0.503        & 0.433         & 0.322     & \bf 0.433 & 0.671     \\
                         & SSC+$\ell_1$ & 0.785     & 0.486     & 0.503        & 0.433         & 0.323     & \bf 0.433 & 0.667     \\
                         & SSC+MCP      & \bf 0.794 & \bf 0.496 & \bf 0.507    & \bf 0.514     & \bf 0.331 & 0.432     & \bf 0.698 \\ \midrule
    \multirow{4}{*}{ARI} &
    SC                   & 0.745        & 0.231     & 0.341     & 0.419        & 0.174         & 0.363     & 0.659                 \\
                         & SSC[relax]   & 0.786     & 0.315     & 0.343        & 0.462         & 0.174     & 0.363     & 0.675     \\
                         & SSC+$\ell_1$ & 0.786     & 0.317     & 0.343        & 0.462         & 0.175     & 0.363     & 0.668     \\
                         & SSC+MCP      & \bf 0.794 & \bf 0.397 & \bf 0.352    & \bf 0.595     & \bf 0.181 & \bf 0.388 & \bf 0.709 \\ \bottomrule
  \end{tabular}
  \label{table:SSC_nmi}
\end{table}

\subsection{Sparse Principal Component Analysis (SPCA)} \label{sec:SPCA}
We consider the SPCA (see, e.g.,~\cite{Chen-Ma-Man-Anthony-Zhang20,Beck-Rosset23,Peng-Wu-Hu-Deng23}) formulated as
\begin{equation}
  \mathrm{find}\ \bm{U}^{\star} \in \argmin_{\bm{U}\in \St(p,N)} \underbrace{-\trace(\bm{U}^{\TT}\bm{\Xi}^{\TT}\bm{\Xi}\bm{U})}_{\eqqcolon h(\bm{U})} + \underbrace{\lambda \norm{\bm{U}}_{1}}_{\eqqcolon g(\bm{U})} \label{eq:SPCA}
\end{equation}
where
$\bm{\Xi} \in \mathbb{R}^{\mathcal{I} \times N}$
is a data matrix,
$\mathcal{I} \in \mathbb{N}$
is the number of data,
$N$
is the dimension of each data, and
$\lambda > 0$
is a predetermined weight.
The SPCA in~\eqref{eq:SPCA} is a special instance of Problem~\ref{problem:constrained} with
$C\coloneqq \St(p,N)$
and
$\mathfrak{S}=\mathrm{Id}$.
By Corollary~\ref{corollary:Cayley}, the SPCA in~\eqref{eq:SPCA} can be reformulated
as Problem~\ref{problem:origin} with
$C\coloneqq \St(p,N)$
and
$\param \coloneqq \param_{\bm{S}}^{\rm Cay}$
in~\eqref{eq:Cayley}:
\begin{equation}
  \mathrm{find}\ \bm{V}^{\star} \in \argmin_{\bm{V}\in Q_{N,p}}(h+g)\circ \param_{\bm{S}}^{\rm Cay}(\bm{V}) \label{eq:SPCA_parameterized}
\end{equation}
In this experiment, we applied Algorithm~\ref{alg:vsmooth} (VSmooth) to the problem in~\eqref{eq:SPCA_parameterized} under the setting~\eqref{eq:setting} with
$\gamma_{n}$
given by Algorithm~\ref{alg:backtracking}.

We compared the numerical performance of VSmooth with that of three Riemannian nonsmooth optimization algorithms  applied to the problem in~\eqref{eq:SPCA}: (i) the Riemannian smoothing gradient algorithm~\cite[Section 4.1]{Beck-Rosset23} (RSmooth),
(ii)
the Riemannian subgradient method~\cite[(2.6)]{Li-Chen-Deng-Qu-Zhu-Man21} (RSub), and
(iii)
the Riemannian proximal gradient method~\cite[Algorithm 2]{Chen-Ma-Man-Anthony-Zhang20} (ManPGAda\footnote{
  We used codes in.
  https://github.com/chenshixiang/ManPG.
}).
For each Riemannian optimization algorithm,
the update from
$\bm{U}_{n} \in \St(p,N)$
to
$\bm{U}_{n+1} \in \St(p,N)$
is expressed as
\begin{equation}
  \bm{U}_{n+1} \coloneqq  R_{\bm{U}_{n}}(\gamma_{n}\bm{\mathcal{D}}_{n}) \label{eq:retraction}
\end{equation}
with a stepsize
$\gamma_{n} >0$
and a search direction
$\bm{\mathcal{D}}_{n} \in T_{\St(p,N)}(\bm{U}_{n})\coloneqq \{\bm{\mathcal{D}}\in \mathbb{R}^{N\times p} \mid \bm{U}_{n}^{\TT}\bm{\mathcal{D}} + \bm{\mathcal{D}}^{\TT}\bm{U}_{n} = \bm{0}\}$
in the tangent space
$T_{\St(p,N)}(\bm{U}_{n})$
(see Example~\ref{ex:normal}),
where
$R_{\bm{U}_{n}}:T_{\St(p,N)}(\bm{U}_{n}) \to \St(p,N)$
is a {\em retraction} satisfying
$R_{\bm{U}_{n}}(\bm{0}) = \bm{U}_{n}$
and
$\mathrm{D}R_{\bm{U}_{n}}(\bm{0})[\bm{\mathcal{D}}] = \bm{\mathcal{D}}\ (\bm{\mathcal{D}} \in T_{\St(p,N)}(\bm{U}_{n}))$
(see, e.g.,~\cite{Absil-Mahony-Sepulchre08,Sato21,Boumal23}).
For these algorithms, we employed the polar decomposition-based retraction~\cite{Absil-Mahony-Sepulchre08}
$R_{\bm{U}}^{\rm polar}(\bm{\mathcal{D}}) \coloneqq  (\bm{U}+\bm{\mathcal{D}})(\bm{I}_{p}+\bm{\mathcal{D}}^{\TT}\bm{\mathcal{D}})^{-\frac{1}{2}}\ (\bm{U}\in \St(p,N), \bm{\mathcal{D}} \in T_{\St(p,N)}(\bm{U}))$
as the retraction
$R_{\bm{U}}$
because
$R_{\bm{U}}^{\rm polar}$
is the retraction that was used in~\cite{Chen-Ma-Man-Anthony-Zhang20,Li-Chen-Deng-Qu-Zhu-Man21,Beck-Rosset23}.
Every search direction
$\bm{\mathcal{D}}_{n}$ in~\eqref{eq:retraction}
is given respectively by:
\begin{align}
  \bm{\mathcal{D}}_{n}^{\rm RSmooth}  & \coloneqq  - P_{T_{\St(p,N)}(\bm{U}_{n})}(\nabla (h+\moreau{g}{\mu_{n}})(\bm{U}_{n}))\ \mathrm{with}\ \mu_{n} \searrow 0; \label{eq:RSmooth}                                                                  \\
  \bm{\mathcal{D}}_{n}^{\rm RSub}     & \coloneqq  -P_{T_{\St(p,N)}(\bm{U}_{n})}(\nabla h(\bm{U}_{n}) + \bm{\mathcal{G}})\  \mathrm{with}\ \bm{\mathcal{G}} \in \Lsubdiff  g(\bm{U}_{n}); \label{eq:RSub}                                             \\
  \bm{\mathcal{D}}_{n}^{\rm ManPGAda} & \coloneqq  \argmin_{\bm{\mathcal{D}} \in T_{\St(p,N)}(\bm{U}_{n})} \inprod{\nabla h(\bm{U}_{n})}{\bm{\mathcal{D}}} + \frac{1}{2t_{n}}\norm{\bm{\mathcal{D}}}_{\param }^{2} + g(\bm{U}_{n} + \bm{\mathcal{D}}) \\
                                      & \hspace{3em}\mathrm{with}\ t_{n} > 0, \label{eq:step_manpgada}
\end{align}
where
$P_{T_{\St(p,N)}(\bm{U}_{n})}:\mathbb{R}^{N\times p} \to T_{\St(p,N)}(\bm{U}_{n})$
is the orthogonal projection mapping onto
$T_{\St(p,N)}(\bm{U}_{n})$ (see, e.g.,~\cite{Absil-Mahony-Sepulchre08}),
and
$t_{n} > 0$
for ManPGAda~\cite{Chen-Ma-Man-Anthony-Zhang20} is adaptively designed to accelerate convergence speed.
Clearly, ManPGAda requires some iterative solver for the subproblem~\eqref{eq:step_manpgada} to find a search direction
while the others do not require, i.e., the others are single-loop algorithms.
For RSmooth, we employed
$(\mu_{n})_{n=1}^{\infty} = ((2\eta)^{-1}n^{-1/3})_{n=1}^{\infty}$
because RSmooth with this
$(\mu_{n})_{n=1}^{\infty}$
achieves the best convergence rate~\cite[Remark 4.1]{Beck-Rosset23}.
In this experiment, to find a stepsize
$\gamma_{n}$,
we used backtracking algorithms for ManPGAda and RSmooth suggested in~\cite{Chen-Ma-Man-Anthony-Zhang20} and~\cite{Beck-Rosset23} respectively, and we employed
$\gamma_{n} \coloneqq  0.99^{n}$
for RSub.

For the problem~\eqref{eq:SPCA} with each
$N \in \{200, 500, 1000\}$,
$p \in \{1, N/100, N/10\}$
and
$\lambda = 0.1$,
we tested performance of all algorithms with
$10$ random
$\bm{\Xi} \in \mathbb{R}^{\mathcal{I}\times N}$
with
$\mathcal{I}= 5000$
generated by the following procedure:
(i) generate
$\bm{\Xi} \in \mathbb{R}^{\mathcal{I} \times N}$
from the standard normal distribution;
(ii) shift the columns of
$\bm{\Xi}$
to have zero means;
(iii) normalize
$\bm{\Xi}$
such that
$\norm{\bm{\Xi}}_{F} = 1$.
For each trial, all algorithms started with the same initial guess
$\bm{U}_{1} \in \St(p,N)$
generated by a MATLAB code ``orth(randn(N,p))''.
Each algorithm was terminated at the $n$th iteration
when the elapsed CPU time at the end of $n$th iteration exceeded
$t(>0)$ seconds (see Table~\ref{table:CPU} for the value of $t$).
\begin{table}[t]
  \caption{Time limits $t$ (s) used for termination}\label{table:CPU}
  \centering
  \begin{tabular}[c]{cccc}
    \toprule
    \multicolumn{1}{c}{}        &
    \multicolumn{1}{c}{p=1}     &
    \multicolumn{1}{c}{p=100/N} &
    \multicolumn{1}{c}{p=10/N}                    \\
    \midrule
    N=200                       & 0.5 & 0.5 & 1.5 \\
    N=500                       & 1.5 & 1.5 & 3.0 \\
    N=1000                      & 3.0 & 3.0 & 9.0 \\
    \bottomrule
  \end{tabular}
\end{table}

Figure~\ref{fig:SPCA_01}
demonstrates the convergence histories for the problem~\eqref{eq:SPCA}, where the plots show CPU time on the horizontal axis versus the averaged cost function value on the vertical axis.
The detailed results are demonstrated in Table~\ref{table:SPCA_01},
where for each final estimate
$\bm{U}^{\diamond} \in \St(p,N)$,
'fval' means the averaged value
$f(\bm{U}^{\diamond})$,
'feasi' means the averaged feasibility error
$\norm{\bm{I}_{p}-\bm{U}^{\diamond\TT}\bm{U}^{\diamond}}_{F}$
of
$\bm{U}^{\diamond}$
from the constraint set
$\St(p,N)$,
'itr' means the averaged number of iterations, 'time' means the CPU time (s), and
'sparsity' means the averaged ratio of entries in
$\bm{U}^{\diamond}$
such that
$\abs{[\bm{U}^{\diamond}]_{i,j}} < 10^{-4}$.

From Figure~\ref{fig:SPCA_01},
we see that the proposed VSmooth converges faster than RSmooth and RSub for all problem sizes.
Thus, VSmooth achieves the best performance among algorithms that do not require any iterative solver for subproblems.
For
$p \in \{1,N/100\}$,
VSmooth seems to be competitive or inferior to ManPGAda.
However,
for
$N\in \{500,1000\}$
and large
$p = N/10$,
the performance of ManPGAda severely deteriorates while no such deterioration is observed for VSmooth.
Moreover, from Table~\ref{table:SPCA_01},
we also observe that ManPGAda updates for only a few iterations;
and
CPU time of ManPGAda significantly exceeds the time limitation shown by Table~\ref{table:CPU}.
These observations imply that ManPGAda took a long time to find an approximated solution of a subproblem in~\eqref{eq:step_manpgada} at every iteration,
and thus single-loop algorithms, namely, VSmooth, RSmooth and RSub, are more computationally reliable than ManPGAda for large problem sizes.
From this experimental result, VSmooth has a comparable numerical performance compared to the fairly standard Riemannian nonsmooth optimization algorithms~\cite{Chen-Ma-Man-Anthony-Zhang20,Li-Chen-Deng-Qu-Zhu-Man21,Beck-Rosset23} even for Problem~\ref{problem:constrained} with a simple case, i.e.,
$g$
is convex and
$\mathfrak{S} = \mathrm{Id}$,
where the proposed VSmooth can be applied to Problem~\ref{problem:constrained}, via Problem~\ref{problem:origin}, with more general setting of the nonlinearity of
$\mathfrak{S}$
as seen in Section~\ref{sec:SSC}.

\begin{figure}[t]
  \centering
  \subfloat[][$(N,p) = (200,1)$]{
    \includegraphics[clip, width=0.30\textwidth]{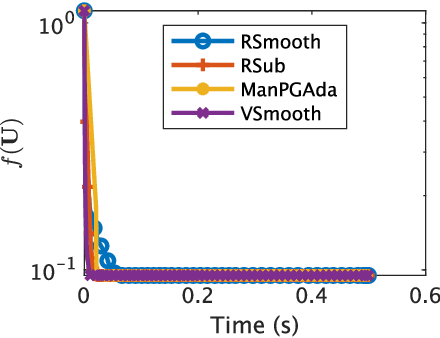}
  }
  \subfloat[][$(N,p) = (200,2)$]{
    \includegraphics[clip, width=0.30\textwidth]{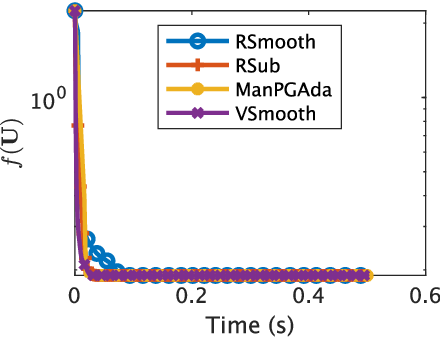}
  }
  \subfloat[][$(N,p) = (200,20)$]{
    \includegraphics[clip, width=0.30\textwidth]{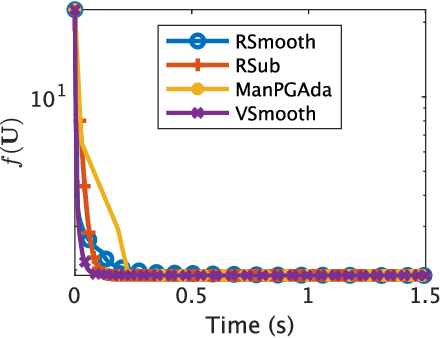}
  } \\
  \subfloat[][$(N,p) = (500,1)$]{
    \includegraphics[clip, width=0.30\textwidth]{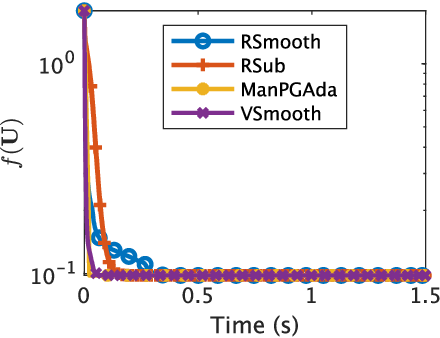}
  }
  \subfloat[][$(N,p) = (500,5)$]{
    \includegraphics[clip, width=0.30\textwidth]{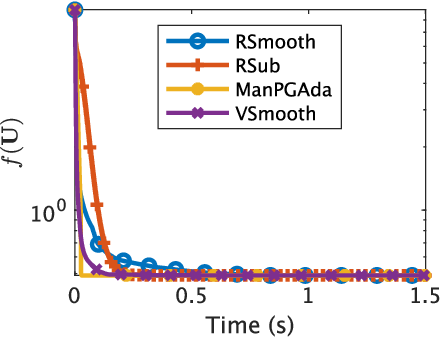}
  }
  \subfloat[][$(N,p) = (500,50)$]{
    \includegraphics[clip, width=0.30\textwidth]{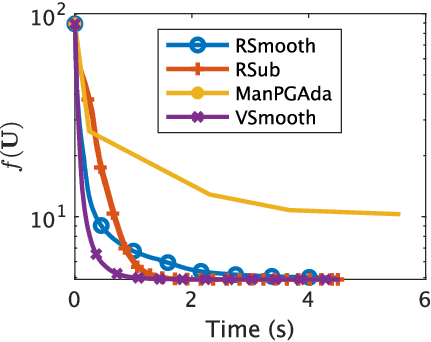}
  } \\
  \subfloat[][$(N,p) = (1000,1)$]{
    \includegraphics[clip, width=0.30\textwidth]{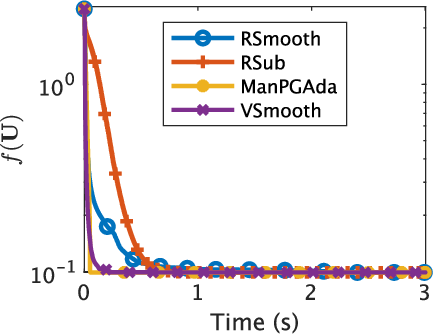}
  }
  \subfloat[][$(N,p) = (1000,10)$]{
    \includegraphics[clip, width=0.30\textwidth]{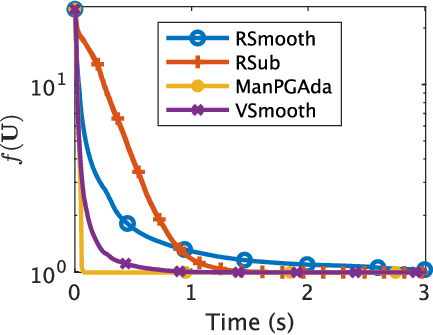}
  }
  \subfloat[][$(N,p) = (1000,100)$]{
    \includegraphics[clip, width=0.30\textwidth]{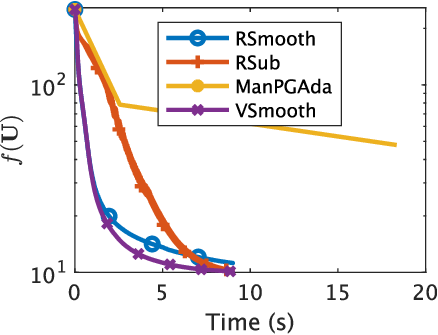}
  }
  \caption{Convergence histories of all algorithms for the problem~\eqref{eq:SPCA} regarding the value $f(\bm{U})$ at CPU time for each problem size.
    Markers are put at every 100 iterations.}
  \label{fig:SPCA_01}
\end{figure}

\begin{table}[!ht]
  \centering
  \footnotesize
  \caption{Averaged performance of all algorithms for the problem~\eqref{eq:SPCA}.}
  \begin{tabular}{cllllll}
    \toprule
    Problem size & Algorithm & fval        & feasi      & itr     & time  & sparsity  \\ \midrule
    \multirow{4}{*}{$(N,p) = (200,1)$}
                 & RSmooth   & 9.50229e-02 & 7.216e-16  & 3673.8  & 0.50  & 9.950e-01 \\
                 & RSub      & 9.50082e-02 & 0.000e+00  & 17357   & 0.50  & 9.950e-01 \\
                 & ManPGAda  & 9.50082e-02 & 2.220e-17  & 3927.5  & 0.50  & 9.950e-01 \\
                 & VSmooth   & 9.50728e-02 & 2.220e-16  & 5493    & 0.50  & 9.950e-01 \\ \midrule
    \multirow{4}{*}{ $(N,p) = (200,2)$}
                 & RSmooth   & 1.90027e-01 & 9.578e-16  & 2292.4  & 0.50  & 9.950e-01 \\
                 & RSub      & 1.89986e-01 & 9.687e-106 & 11810.9 & 0.50  & 9.950e-01 \\
                 & ManPGAda  & 1.89973e-01 & 2.116e-16  & 1956.2  & 0.50  & 9.950e-01 \\
                 & VSmooth   & 1.90124e-01 & 7.477e-16  & 2994.9  & 0.50  & 9.950e-01 \\ \midrule
    \multirow{4}{*}{$(N,p) = (200,20)$}
                 & RSmooth   & 1.90032e+00 & 5.081e-15  & 1646.5  & 1.50  & 9.950e-01 \\
                 & RSub      & 1.89997e+00 & 3.513e-15  & 8918.2  & 1.50  & 9.950e-01 \\
                 & ManPGAda  & 1.89987e+00 & 6.250e-15  & 1377.4  & 1.50  & 9.950e-01 \\
                 & VSmooth   & 1.90149e+00 & 3.050e-15  & 2552.4  & 1.50  & 9.950e-01 \\ \midrule
    \multirow{4}{*}{$(N,p) = (500,1)$}
                 & RSmooth   & 9.80240e-02 & 7.661e-16  & 1943    & 1.50  & 9.980e-01 \\
                 & RSub      & 9.80058e-02 & 0.000e+00  & 7027.6  & 1.50  & 9.980e-01 \\
                 & ManPGAda  & 9.80058e-02 & 2.220e-17  & 1805.2  & 1.50  & 9.980e-01 \\
                 & VSmooth   & 9.80899e-02 & 1.554e-16  & 2513.7  & 1.50  & 9.980e-01 \\ \midrule
    \multirow{4}{*}{$(N,p) = (500,5)$}
                 & RSmooth   & 4.90112e-01 & 3.119e-15  & 1133.5  & 1.50  & 9.980e-01 \\
                 & RSub      & 4.89997e-01 & 9.807e-16  & 4875.8  & 1.50  & 9.980e-01 \\
                 & ManPGAda  & 4.90014e-01 & 2.370e-15  & 784.8   & 1.50  & 9.980e-01 \\
                 & VSmooth   & 4.90525e-01 & 2.025e-15  & 1299.5  & 1.50  & 9.980e-01 \\ \midrule
    \multirow{4}{*}{$(N,p) = (500,50)$}
                 & RSmooth   & 4.98856e+00 & 1.732e-14  & 774.8   & 4.51  & 9.977e-01 \\
                 & RSub      & 4.90002e+00 & 9.091e-15  & 2145.3  & 4.50  & 9.980e-01 \\
                 & ManPGAda  & 1.02775e+01 & 1.992e-14  & 5       & 5.57  & 8.736e-01 \\
                 & VSmooth   & 4.90486e+00 & 6.629e-15  & 1150.1  & 4.50  & 9.980e-01 \\ \midrule
    \multirow{4}{*}{$(N,p) = (1000,1)$}
                 & RSmooth   & 9.90208e-02 & 8.882e-16  & 1076.8  & 3.00  & 9.990e-01 \\
                 & RSub      & 9.89987e-02 & 0.000e+00  & 3238    & 3.00  & 9.990e-01 \\
                 & ManPGAda  & 9.89987e-02 & 4.441e-17  & 967.9   & 3.00  & 9.990e-01 \\
                 & VSmooth   & 9.91019e-02 & 3.775e-16  & 1338.7  & 3.00  & 9.990e-01 \\ \midrule
    \multirow{4}{*}{$(N,p) = (1000,10)$}
                 & RSmooth   & 1.02418e+00 & 6.645e-15  & 570.5   & 3.01  & 9.988e-01 \\
                 & RSub      & 9.90010e-01 & 2.970e-14  & 1700.3  & 3.00  & 9.990e-01 \\
                 & ManPGAda  & 9.90017e-01 & 4.901e-15  & 328.8   & 3.00  & 9.990e-01 \\
                 & VSmooth   & 9.91335e-01 & 3.006e-15  & 612.5   & 3.00  & 9.990e-01 \\ \midrule
    \multirow{4}{*}{$(N,p) = (1000,100)$}
                 & RSmooth   & 1.12218e+01 & 3.053e-14  & 376.2   & 9.02  & 9.973e-01 \\
                 & RSub      & 1.02637e+01 & 3.397e-14  & 723.6   & 9.01  & 9.051e-01 \\
                 & ManPGAda  & 4.77660e+01 & 3.306e-14  & 3       & 18.35 & 3.618e-01 \\
                 & VSmooth   & 1.01238e+01 & 9.032e-15  & 502.4   & 9.01  & 9.989e-01 \\ \bottomrule
  \end{tabular}
  \label{table:SPCA_01}
\end{table}

\section{Conclusions}
We addressed a manifold constrained nonsmooth optimization problem, where the cost function consists of a smooth function and the composite function of a weakly convex function and a smooth (nonlinear) mapping.
For the target problem, we proposed a variable smoothing algorithm with a parametrization of a nonconvex constraint set.
We also presented an asymptotic convergence analysis and a first-order oracle complexity
$\mathcal{O}(\epsilon^{-3})$
of the proposed algorithm in terms of stationary points.
The numerical experiments demonstrate that (i)
the proposed algorithm improves the numerical performance of sparsity-aware application by using a weakly convex regularizer;
(ii)
the proposed algorithm has a comparable numerical performance compared with the existing Riemannian nonsmooth optimization algorithms based on a retraction.

\bmhead{Acknowledgements}
This work was supported partially by JSPS Grants-in-Aid (19H04134, 22KJ1270, 24K23885) and by JST SICORP (JPMJSC20C6).
We thank the anonymous reviewers for their thoughtful comments, which greatly improved the clarity and quality of the paper.
\bibliographystyle{sn-mathphys-num}
{
  \bibliography{main}%
}
\appendix

\section{Related Work} \label{appendix:related_work}

\subsection{Smoothing-type algorithms}
\label{appendix:related_work:smoothing}
We review smoothing-type algorithms for finding a  stationary point of a nonsmooth function
$J:\mathcal{X}\to\mathbb{R}$
because the proposed algorithm (Algorithm~\ref{alg:vsmooth}) can be viewed as a variant of smoothing-type algorithms.
Smoothing methods have been extensively developed~\cite{Zhang-Chen09,Chen12,Huang-Liu16,Wang-Chen21,Lai-Yoshise22,Zhang-Chen-Ma23,Nishioka-Kanno23} with a smoothed surrogate function, say
$J^{\langle\mu\rangle}:\mathcal{X}\to\mathbb{R}\ (\mu\in \mathbb{R}_{++})$,
having the {\em (sub-)gradient consistency property} (see Remark~\ref{remark:gradient_consistency}) with
$\lim_{\mathbb{R}_{++}\ni \mu\to 0}  J^{\langle \mu \rangle}(\bm{x}) = J(\bm{x})\ (\bm{x} \in \mathcal{X})$.
The gradient consistency property has been utilized as a key tool  to express the first-order optimality condition in terms of a gradient sequence of
$\nabla J^{\langle \mu \rangle}$
with
$\mu\searrow 0$.
The basic update of a smoothing gradient method~\cite{Chen12} is given by
$\bm{x}_{n+1} \coloneqq \bm{x}_{n} - \gamma_{n} \nabla J^{\langle \mu_{n} \rangle}(\bm{x}_{n})\ (n\in\mathbb{N})$
with a properly chosen stepsize
$\gamma_{n} \in \mathbb{R}_{++}$,
where smoothing parameters
$\mu_{n}$
are conditionally updated with
$\sigma \in (0,1)$
and
$\delta \in \mathbb{R}_{++}$
by setting
$\mu_{n+1} \coloneqq \sigma\mu_{n}$
if
$\norm{\nabla J^{\langle\mu_{n}\rangle}(\bm{x}_{n})} < \delta\mu_{n}$;
$\mu_{n+1} \coloneqq \mu_{n}$
otherwise.
Thanks to the (sub-)gradient consistency property of
$J^{\langle\mu\rangle}$,
a cluster point of
$(\bm{x}_{n})_{n=1}^{\infty}$
is guaranteed to be a (Clarke) stationary point of
$J$~\cite{Chen12}.
Based on the conditional update of smoothing parameters, many numerical algorithms, e.g., accelerated gradient methods~\cite{Wang-Chen21,Nishioka-Kanno23}, projected gradient methods~\cite{Zhang-Chen09,Huang-Liu16} and manifold constrained smoothing methods~\cite{Lai-Yoshise22,Zhang-Chen-Ma23}, for updating
$(\bm{x}_{n})_{n=1}^{\infty}$
have been established with asymptotic convergence analyses in terms of a stationary point.
However, these smoothing methods can be viewed as double-loop algorithms due to the conditional update of smoothing parameters.

Another class of smoothing-type methods has been proposed with iteratively decreasing smoothing parameters
$(\mu_{n})_{n=1}^{\infty}$~\cite{Bot-Hendrich15,Tran-Dinh17,Bot-Bohm20,Bohm-Wright21,Jalilzadeh-Shanbhag-Blanchet-Glynn22,Wu-Bian23,Beck-Rosset23,Peng-Wu-Hu-Deng23,Nishioka-Kanno24}, where these methods are called variable/adaptive/dynamic smoothing methods.
Thanks to the use of iteratively decreasing smoothing parameters, these variable smoothing methods are single-loop algorithms.
For convex optimization problems, the oracle complexities for finding the optimal value of the cost function have been shown for~\cite{Bot-Hendrich15,Tran-Dinh17,Bot-Bohm20,Jalilzadeh-Shanbhag-Blanchet-Glynn22,Wu-Bian23,Nishioka-Kanno24}, where an exceptional paper~\cite{Wu-Bian23} guarantees that a generated sequence converges to a minimizer of the cost function.
Recently, for nonconvex nonsmooth optimization problems, a variable smoothing algorithm~\cite{Bot-Hendrich15} has been extended by~\cite{Bohm-Wright21} to a special version of Problem~\ref{problem:constrained} where
$\mathfrak{S}$
is a linear operator
and
$C\coloneqq \mathcal{X}$.
The variable smoothing algorithm in~\cite{Bohm-Wright21} uses
a smoothed surrogate function
$h+\moreau{g}{\mu}\circ\mathfrak{S}$
with
the {\em Moreau envelope}
$\moreau{g}{\mu}$
of
$g$
(see~\eqref{eq:Moreau}),
and its first-order oracle complexity
$\mathcal{O}(\epsilon^{-3})$
for finding a certain
$\epsilon$-approximate stationary point in the sense of~\eqref{eq:approximate_stationary} has been shown under the linearity of
$\mathfrak{S}$.
This variable smoothing algorithm~\cite{Bohm-Wright21} with an oracle complexity guarantee has also been extended by~\cite{Beck-Rosset23,Peng-Wu-Hu-Deng23} to Problem~\ref{problem:constrained} where
$C$
is a compact embedded submanifold of
$\mathcal{X}$,
but
$\mathfrak{S}$
is still assumed to be linear.
However, for variable smoothing algorithms~\cite{Bohm-Wright21,Peng-Wu-Hu-Deng23},  except for~\cite{Beck-Rosset23}, any asymptotic convergence analysis to a stationary point has not been reported yet, where~\cite{Beck-Rosset23} presents an asymptotic convergence analysis by exploiting the compactness of
$C$
and the linearity of
$\mathfrak{S}$
(see also Table~\ref{table:nonsmooth} in Section~\ref{sec:introduction}).

\subsection{Penalty-based strategy}
\label{appendix:related_work:Lagrangian}
In a case where
$C$
in Problem~\ref{problem:constrained} is defined by equality constraints, i.e.,
$C=H^{-1}(\bm{0})$
with some continuously differentiable mapping
$H:\mathcal{X} \to \mathbb{R}^{m}$,
penalty-based strategies have been proposed, e.g.,~\cite[Ch. 12]{Nocedal-Wright06},~\cite{Xiao-Liu-Toh24}, as an alternative strategy to the parametrization strategy in Section~\ref{sec:necessary} for translating Problem~\ref{problem:constrained} into an unconstrained problem.
Such penalty-based strategies aim to find a point satisfying the {\em Karush-Kuhn-Tucker (KKT) condition}
by assuming the availability of
$H$
with suitable regularities, e.g., {\em Linearly Independent Constraint Qualification (LICQ)}, i.e.,
$\mathrm{rank}(\mathrm{D}H(\bm{x}))=m$
for all
$\bm{x} \in C$,
where
$H$
with LICQ is called a {\em (global) defining map}~\cite[Cor. 8.76]{Boumal23}.
Indeed,
LICQ of
$H$
makes the KKT condition a necessary condition for the local optimality of Problem~\ref{problem:constrained} with a smooth
$\cost$~\cite[Thm. 12.1]{Nocedal-Wright06}, while the KKT condition without LICQ of
$H$
is not guaranteed to be a necessary condition for the local optimality.
However,
the existence of a global defining map
$H$
with LICQ
is not guaranteed for general embedded submanifolds~\cite[p.50]{Boumal23}, e.g., {\em non-orientable manifolds}~\cite[Prop. 15.23, pp.106-107]{Lee12}
(see~\cite{Vogt-Strekalovskiy-Cremers-Lellmann20} for applications of optimization over non-orientable manifolds).
Moreover, penalty-based strategies~\cite{Nocedal-Wright06,Xiao-Liu-Toh24} may generate an infeasible sequence
$(\bm{x}_{n})_{n=1}^{\infty} \subset \mathcal{X}$,
i.e.,
$\bm{x}_{n}$
may not belong to
$C$,
where the feasibility is guaranteed at cluster points of
$(\bm{x}_{n})_{n=1}^{\infty}$
in general.

In contrast, for an embedded submanifold
$C$
in Problem~\ref{problem:constrained},
a parametrization
$\param$
of
$C$
exists (see Example~\ref{example:parametrization}~\ref{enum:ex:parameterization:genearal}),
and
a stationary point
$\bm{y}^{\star} \in \mathcal{Y}$
of Problem~\ref{problem:origin} satisfies
a necessary condition for the local optimality of Problem~\ref{problem:constrained} at
$\bm{x}^{\star}\coloneqq \param(\bm{y}^{\star}) \in C$
(see Remark~\ref{remark:submersion}).
Under an additional condition, e.g.,
submersion condition on
$\param$,
$\bm{x}^{\star}$
is a stationary point of Problem~\ref{problem:constrained} (see Theorem~\ref{theorem:necessary} and Corollary~\ref{corollary:optimality_C_F}).
Furthermore, the parametrization strategy described in Section~\ref{sec:necessary} generates a sequence
$(\bm{y}_{n})_{n=1}^{\infty}\subset \mathcal{Y}$
for finding a stationary point
$\bm{y}^{\star}$;
hence the feasibility of
$(\param(\bm{y}_{n}))_{n=1}^{\infty} \subset C$
is guaranteed via
$\param$
(see Algorithm~\ref{alg:vsmooth} in Section~\ref{sec:proposed}; see also, e.g.,~\cite{Yamada-Ezaki03,Fraikin-Huper-Dooren07,Hori-Tanaka10,Helfrich-Willmott-Ye18,Lezcano19,Kume-Yamada21,Kume-Yamada22,Levin-Kileel-Boumal24}, for parametrization strategy with smooth cost function).

Finally, we remark that
the submersion condition of a parametrization
$\param$
and LICQ of
$H$
are closely related
if both
$\param$
with the submersion condition
and
a global defining map
$H$
with LICQ exist for an embedded submanifold
$C$.
Indeed, in this case, we have
$T_{C}(\bm{x}) = \mathrm{Ker}(\mathrm{D}H(\bm{x})) = \mathrm{Ran}(\mathrm{D}\param(\bm{y})) \ (\bm{y} \in \mathcal{Y}, \bm{x}\coloneqq \param(\bm{y}) \in C)$.
However,
the availabilities of
$\param$
with the submersion condition and
a global defining map
$H$
with LICQ are not comparable in general because we have an example\footnote{ \label{foot:Klein}
  The {\em Klein bottle} has no global defining map satisfying LICQ due to its non-orientability (see, e.g.,~\cite[Sect. 3.3.1]{Vogt-Strekalovskiy-Cremers-Lellmann20} and~\cite[Prop. 15.23, pp.106-107]{Lee12}).
  However,
  since
  $\mathbb{R}^{2}$
  is the {\em universal covering manifold}~\cite[p.90]{Lee12} of
  the Klein bottle (see, e.g.,~\cite{Anosov90}),
  there exists a {\em smooth covering mapping}~\cite[p.90]{Lee12} from
  $\mathbb{R}^{2}$
  onto the Klein bottle.
  Such a smooth covering mapping is
  a parametrization
  $\param$,
  defined over
  $\mathbb{R}^{2}$,
  of the Klein bottle
  such that the submersion condition is satisfied at every point in
  $\mathbb{R}^{2}$ by~\cite[Prop. 4.33 (a)]{Lee12}.
}of
$C$
such that any global defining map
$H$
of
$C$
with LICQ
does not exist, whereas a parametrization
$\param:\mathcal{Y}\to\mathcal{X}$
of
$C$
with the submersion condition exists.

\section{Proof of Lemma~\ref{lemma:subdifferentially}}\label{appendix:lemma:subdifferentially}
Although Lemma~\ref{lemma:subdifferentially} can be checked by the subdifferential calculus rules~\cite[Thm. 10.6, Cor. 10.9]{Rockafellar-Wets98}, we present a short proof of Lemma~\ref{lemma:subdifferentially} with some necessary facts in order to increase the readability for researchers in the area of applications.
The subdifferential calculus rules heavily rely on the {\em horizontal subdifferential}, which gives constraint qualifications, and the {\em subdifferential regularity} of a function (see Definition~\ref{definition:regular}~\ref{enum:definition:regular:function} and Example~\ref{ex:regular}).

\begin{definition}[Horizontal subdifferential{~\cite[Def. 8.3 and Thm. 8.9]{Rockafellar-Wets98}}] \label{definition:subdifferentially_regular}
  Let
  $J:\mathcal{X} \to \exR$
  be a proper lower semicontinuous function.
  Then,
  $\subdiff^{\infty}J(\widebar{\bm{x}})\coloneqq \{\bm{v}\in \mathcal{X} \mid (\bm{v},0) \in N_{\epi{J}}(\widebar{\bm{x}},J(\widebar{\bm{x}}))\}$
  is called the {\em horizontal subdifferential} of
  $J$
  at
  $\widebar{\bm{x}} \in \dom{J}$.
\end{definition}

\begin{example}[Subdifferentially regular functions] \label{ex:regular}
  Subdifferentially regular functions include, e.g.,
  continuously differentiable functions~\cite[Exm. 7.28]{Rockafellar-Wets98},
  proper lower semicontinuous (weakly) convex functions~\cite[Exm. 7.27]{Rockafellar-Wets98}~\cite[Prop. 4.4.15]{Ying-Jong-Shi}, and indicator functions of Clarke regular subsets~\cite[Exm. 7.28]{Rockafellar-Wets98}.
\end{example}

Fact~\ref{fact:strictly_continuous} presents useful properties, regarding
$\subdiff^{\infty}J$,
that will be used for our analyses.
\begin{fact}[Horizontal subdifferential being zero~{\cite[Thm. 9.13]{Rockafellar-Wets98}}] \label{fact:strictly_continuous}
  Let
  $J:\mathcal{X} \to \exR$
  be a proper lower semicontinuous function.
  Then, for
  $\widebar{\bm{x}} \in \dom{J}$,
  the following conditions are equivalent:
  \begin{enumerate}[label=(\alph*)]
    \item \label{enum:horizontal}
          $\subdiff^{\infty}J(\widebar{\bm{x}}) = \{\bm{0}\}$.
    \item \label{enum:strictly_continuous}
          $J$
          is locally Lipschitz continuous at
          $\widebar{\bm{x}}$.
    \item
          $\Lsubdiff  J$
          is {\em locally bounded} at
          $\widebar{\bm{x}}$,
          i.e., there exists an open neighborhood
          $\mathcal{N}_{\widebar{\bm{x}}} \subset \mathcal{X}$
          of
          $\widebar{\bm{x}}$
          such that
          $\Lsubdiff  J(\mathcal{N}_{\widebar{\bm{x}}}) = \bigcup_{\bm{x}\in\mathcal{N}_{\widebar{\bm{x}}}}\Lsubdiff  J(\bm{x})$
          is bounded.
  \end{enumerate}
  Moreover, if these conditions hold at
  $\widebar{\bm{x}} \in \dom{J}$,
  then
  $\Lsubdiff  J(\widebar{\bm{x}}) \neq \emptyset$.
\end{fact}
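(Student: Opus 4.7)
The plan is to prove the equivalences by establishing the cycle $(b)\Rightarrow(c)\Rightarrow(a)\Rightarrow(b)$ and then deduce nonemptiness of $\partial J(\widebar{\bm{x}})$ as a corollary. The central tool I would use is the representation
\[
\partial^{\infty}J(\widebar{\bm{x}}) = \left\{\lim_{n\to\infty}\lambda_{n}\bm{v}_{n} \mid \bm{x}_{n}\to\widebar{\bm{x}},\ J(\bm{x}_{n})\to J(\widebar{\bm{x}}),\ \bm{v}_{n}\in\widehat{\partial}J(\bm{x}_{n}),\ \lambda_{n}\searrow 0 \right\},
\]
which follows from unpacking the definition $\partial^{\infty}J(\widebar{\bm{x}})=\{\bm{v}\mid (\bm{v},0)\in N_{\epi{J}}(\widebar{\bm{x}},J(\widebar{\bm{x}}))\}$ together with the known correspondence between regular normals to $\epi{J}$ at $(\bm{x},J(\bm{x}))$ and scaled pairs $(\bm{v},-1)$ with $\bm{v}\in\widehat{\partial}J(\bm{x})$.

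For $(b)\Rightarrow(c)$, if $J$ is $L$-Lipschitz on an open neighborhood $\mathcal{N}_{\widebar{\bm{x}}}$, then for any $\bm{x}\in\mathcal{N}_{\widebar{\bm{x}}}$ and any $\bm{v}\in\widehat{\partial}J(\bm{x})$, the defining inequality of the regular subdifferential combined with $|J(\bm{x}')-J(\bm{x})|\le L\|\bm{x}'-\bm{x}\|$ forces $\|\bm{v}\|\le L$. Passing to the outer limit preserves this bound, so $\partial J(\mathcal{N}_{\widebar{\bm{x}}})$ is bounded by $L$. For $(c)\Rightarrow(a)$, suppose $\partial J$ is bounded by some $M$ on a neighborhood of $\widebar{\bm{x}}$; then in the representation above, $\|\lambda_{n}\bm{v}_{n}\|\le M\lambda_{n}\to 0$, so every element of $\partial^{\infty}J(\widebar{\bm{x}})$ equals $\bm{0}$.

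The hard step is $(a)\Rightarrow(b)$, and I would argue by contraposition. Assuming $J$ is not locally Lipschitz at $\widebar{\bm{x}}$, one can construct pairs $(\bm{x}_{n},\bm{x}_{n}')\to(\widebar{\bm{x}},\widebar{\bm{x}})$ with $|J(\bm{x}_{n})-J(\bm{x}_{n}')|/\|\bm{x}_{n}-\bm{x}_{n}'\|\to\infty$. The main obstacle is converting this analytic blow-up into a nonzero horizontal subgradient: the standard route is to apply a smooth-variational approach (e.g., a quadratic perturbation with ridge parameter tending to zero) to the lower semicontinuous function $J$ on small balls around $\bm{x}_{n}$, producing approximate minimizers $\bm{z}_{n}$ at which $\widehat{\partial}J(\bm{z}_{n})$ contains vectors $\bm{v}_{n}$ whose norms go to infinity. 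Normalizing by $\lambda_{n}\coloneqq 1/(1+\|\bm{v}_{n}\|)\searrow 0$, compactness of the unit sphere yields a subsequence with $\lambda_{n}\bm{v}_{n}\to\bm{v}\neq\bm{0}$, and along this subsequence $J(\bm{z}_{n})\to J(\widebar{\bm{x}})$ by the lower semicontinuity together with the control from the perturbation; hence $\bm{v}\in\partial^{\infty}J(\widebar{\bm{x}})\setminus\{\bm{0}\}$, contradicting $(a)$.

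Finally, for the concluding claim that $\partial J(\widebar{\bm{x}})\neq\emptyset$ under these equivalent conditions: since (b) holds, $J$ is continuous near $\widebar{\bm{x}}$, and by the standard density result for regular subgradients of lower semicontinuous functions (Rockafellar--Wets, Cor.~8.10), one can find $\bm{x}_{n}\to\widebar{\bm{x}}$ with $J(\bm{x}_{n})\to J(\widebar{\bm{x}})$ and $\widehat{\partial}J(\bm{x}_{n})\ni\bm{v}_{n}\neq\emptyset$. By $(c)$, $(\bm{v}_{n})$ is bounded; extracting a convergent subsequence yields a limit $\bm{v}\in\partial J(\widebar{\bm{x}})$ by the definition \eqref{eq:general_subdifferential}. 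The essential technical difficulty lies entirely in $(a)\Rightarrow(b)$; the other implications are direct consequences of the representation of $\partial^{\infty}J$ and the definitions.
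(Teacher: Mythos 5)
First, note that the paper does not prove this statement at all: it is quoted as a Fact with the citation \cite[Thm.~9.13]{Rockafellar-Wets98}, so your proposal is being measured against Rockafellar--Wets rather than against any argument in the paper. Three of your four legs are essentially sound. The representation of $\partial^{\infty}J(\widebar{\bm{x}})$ as the set of limits $\lim_{n\to\infty}\lambda_{n}\bm{v}_{n}$ with $\bm{v}_{n}\in\widehat{\partial}J(\bm{x}_{n})$, $\bm{x}_{n}\to\widebar{\bm{x}}$, $J(\bm{x}_{n})\to J(\widebar{\bm{x}})$, $\lambda_{n}\searrow 0$ is exactly \cite[Def.~8.3]{Rockafellar-Wets98}, and its equivalence with the paper's epigraphical definition is \cite[Thm.~8.9]{Rockafellar-Wets98}; be aware, though, that this is not mere ``unpacking'': one must show that regular \emph{horizontal} normals $(\bm{v},0)\in\widehat{N}_{\epi{J}}$ at nearby epigraph points are themselves attainable as limits of scaled $(\bm{v}_{n},-1)$-type normals, which is a variational argument in its own right, so cite Thm.~8.9 rather than sketch it. Granting the representation, your (b)$\Rightarrow$(c) via the bound $\norm{\bm{v}}\leq L$ on regular subgradients and outer limits is correct; (c)$\Rightarrow$(a) is correct (and $\bm{0}\in\partial^{\infty}J(\widebar{\bm{x}})$ holds automatically from the epigraphical definition, since general normal cones contain the origin, so you get equality rather than only the inclusion in $\{\bm{0}\}$); and the closing nonemptiness argument (density of points with $\widehat{\partial}J\neq\emptyset$, then local boundedness plus the definition of $\partial J$) is fine.

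The genuine gap is in (a)$\Rightarrow$(b), at the sentence asserting $J(\bm{z}_{n})\to J(\widebar{\bm{x}})$ ``by the lower semicontinuity together with the control from the perturbation.'' Membership in $\partial^{\infty}J(\widebar{\bm{x}})$ requires $J$-attentive convergence, but your blow-up pairs $(\bm{x}_{n},\bm{x}_{n}')$ converge only in space; their values need not approach $J(\widebar{\bm{x}})$, and lower semicontinuity yields only $\liminf_{n\to\infty}J(\bm{z}_{n})\geq J(\widebar{\bm{x}})$ --- the needed upper bound is precisely what your construction does not deliver. Concretely, take $J(x)=0$ for $x\leq 0$ and $J(x)=1$ for $x>0$, with $\widebar{x}=0$: this $J$ is proper, lsc, and not locally Lipschitz at $0$; every blow-up pair straddles the jump with one value equal to $1\neq J(0)$; and a quadratic perturbation with ridge parameter tending to zero centered near such pairs localizes where $J$ is locally constant and returns the subgradient $0$ --- no blow-up, and the wrong values. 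The only unbounded $J$-attentive regular subgradients live at $\widebar{x}=0$ itself, where $\widehat{\partial}J(0)=[0,+\infty)$, and nothing in your scheme steers $\bm{z}_{n}$ there. A correct completion must anchor the variational principle so as to retain value control relative to $J(\widebar{\bm{x}})$, e.g.\ minimizing tilted penalizations $J-\inprod{\bm{k}_{n}}{\cdot}+\tfrac{1}{2\epsilon_{n}}\norm{\cdot-\widebar{\bm{x}}}^{2}$ with $(\bm{k}_{n},\epsilon_{n})$ coupled to the failure of the $\kappa$-Lipschitz estimate, so that $J(\bm{z}_{n})\leq J(\widebar{\bm{x}})+\varepsilon_{n}$ comes out of the minimization itself while lsc supplies the matching lower bound; alternatively, follow the epigraphical route behind \cite[Thm.~9.13]{Rockafellar-Wets98}: pointedness of $N_{\epi{J}}(\widebar{\bm{x}},J(\widebar{\bm{x}}))$ (no nonzero horizontal normals) propagates to nearby epigraph points by outer semicontinuity of the normal cone mapping, giving a uniform bound $\norm{\bm{v}}\leq\kappa\abs{u}$ on normals $(\bm{v},u)$ and hence on subgradients $J$-attentively near $\widebar{\bm{x}}$, after which a mean-value estimate of the type \cite[Thm.~9.2]{Rockafellar-Wets98} converts the subgradient bound into the Lipschitz property. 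As written, your contraposition does not close.
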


\begin{fact}[Calculus rules for subdifferential] \label{fact:chain_rule}
  Let
  $\mathcal{X}$
  and
  $\mathcal{Y}$
  be Euclidean spaces.
  Let
  $J_{1},J_{2}:\mathcal{X} \to \exR$
  be proper lower semicontinuous and subdifferentially regular functions, and
  $\mathcal{F}:\mathcal{Y} \to \mathcal{X}$
  be a continuously differentiable mapping.
  Then, the following hold:
  \begin{enumerate}[label=(\alph*)]
    \item (Chain rule~\cite[Thm. 10.6]{Rockafellar-Wets98}) \label{enum:basic_chain_rule}
          If the constraint qualification
          $\subdiff^{\infty} J_{1}(\mathcal{F}(\widebar{\bm{y}})) \cap \mathrm{Ker}((\mathrm{D}\mathcal{F}(\widebar{\bm{y}}))^{*}) = \{\bm{0}\}$
          holds at
          $\widebar{\bm{y}} \in \dom{J_{1}\circ\mathcal{F}}$,
          then
          $J_{1}\circ \mathcal{F}$
          is subdifferentially regular at
          $\widebar{\bm{y}}$
          and
          \begin{align}
            \Lsubdiff  (J_{1}\circ \mathcal{F})(\widebar{\bm{y}})        & = (\mathrm{D}\mathcal{F}(\widebar{\bm{y}}))^{*}[\Lsubdiff  J_{1}(\mathcal{F}(\widebar{\bm{y}}))];     \label{eq:basic_chain}         \\
            \subdiff^{\infty} (J_{1}\circ \mathcal{F})(\widebar{\bm{y}}) & = (\mathrm{D}\mathcal{F}(\widebar{\bm{y}}))^{*}[\subdiff^{\infty} J_{1}(\mathcal{F}(\widebar{\bm{y}}))]. \label{eq:chain_horizontal}
          \end{align}
    \item (Sum rule~\cite[Cor. 10.9, Exe. 8.8]{Rockafellar-Wets98}) \label{enum:sum_rule_general}
          If the constraint qualification
          $\subdiff^{\infty}J_{1}(\widebar{\bm{x}}) \cap (-\subdiff^{\infty}J_{2}(\widebar{\bm{x}})) = \{\bm{0}\}$
          holds at
          $\widebar{\bm{x}} \in \dom{J_{1}+J_{2}}$,
          then
          $J_{1} + J_{2}$
          is subdifferentially regular at
          $\widebar{\bm{x}}$,
          and
          \begin{align}
            \Lsubdiff  (J_{1}+J_{2})(\widebar{\bm{x}}) & = \Lsubdiff  J_{1}(\widebar{\bm{x}}) + \Lsubdiff  J_{2}(\widebar{\bm{x}}). \label{eq:sum_general}
          \end{align}
          Moreover, if
          $J_{2}$
          is continuously differentiable, then
          $J_{1}+J_{2}$
          is subdifferentially regular at
          $\widebar{\bm{x}} \in \dom{J_{1}}$,
          and
          \begin{align}
            \Lsubdiff  (J_{1}+J_{2})(\widebar{\bm{x}})        & = \Lsubdiff  J_{1}(\widebar{\bm{x}}) + \nabla J_{2}(\widebar{\bm{x}}); \label{eq:sum_smooth_subgradient} \\
            \subdiff^{\infty} (J_{1}+J_{2})(\widebar{\bm{x}}) & = \subdiff^{\infty} J_{1}(\widebar{\bm{x}}). \label{eq:sum_smooth}
          \end{align}
  \end{enumerate}
\end{fact}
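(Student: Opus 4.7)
The plan is to derive both identities as direct applications of Fact~\ref{fact:chain_rule} (the sum and chain rules from Rockafellar--Wets), after verifying that all required subdifferential regularity and constraint qualification hypotheses hold in the setting of Problem~\ref{problem:constrained}. The leverage point is that $g$ is globally Lipschitz, which by Fact~\ref{fact:strictly_continuous} forces $\partial^{\infty} g \equiv \{\bm{0}\}$, and this triviality will propagate through each composition and sum so that every constraint qualification reduces to $\{\bm{0}\} \cap (\cdot) = \{\bm{0}\}$.

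First I will analyse $g \circ \mathfrak{S}$. Since $g$ is $\eta$-weakly convex (so $g + \tfrac{\eta}{2}\|\cdot\|^{2}$ is convex, proper, lsc) it is subdifferentially regular by Example~\ref{ex:regular}, and since $g$ is globally Lipschitz Fact~\ref{fact:strictly_continuous} yields $\partial^{\infty} g(\widebar{\bm{z}}) = \{\bm{0}\}$ for every $\widebar{\bm{z}} \in \mathcal{Z}$. With $\mathcal{F} \coloneqq \mathfrak{S}$ continuously differentiable, the chain-rule qualification $\partial^{\infty} g(\mathfrak{S}(\widebar{\bm{x}})) \cap \mathrm{Ker}((\mathrm{D}\mathfrak{S}(\widebar{\bm{x}}))^{*}) = \{\bm{0}\}$ holds trivially, so Fact~\ref{fact:chain_rule}~\ref{enum:basic_chain_rule} gives that $g \circ \mathfrak{S}$ is subdifferentially regular with
\begin{equation}
\partial (g \circ \mathfrak{S})(\widebar{\bm{x}}) = (\mathrm{D}\mathfrak{S}(\widebar{\bm{x}}))^{*}[\partial g(\mathfrak{S}(\widebar{\bm{x}}))], \quad \partial^{\infty}(g\circ \mathfrak{S})(\widebar{\bm{x}}) = \{\bm{0}\}.
\end{equation}
Adding the $C^{1}$ function $h$, the smooth-summand part of Fact~\ref{fact:chain_rule}~\ref{enum:sum_rule_general} (equations \eqref{eq:sum_smooth_subgradient}--\eqref{eq:sum_smooth}) shows that $h + g \circ \mathfrak{S}$ is subdifferentially regular with $\partial^{\infty}(h + g \circ \mathfrak{S})(\widebar{\bm{x}}) = \{\bm{0}\}$ everywhere.

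For \eqref{eq:subdifferentially_f_C}, I apply the general sum rule \eqref{eq:sum_general} to $J_{1} \coloneqq h + g \circ \mathfrak{S}$ and $J_{2} \coloneqq \iota_{C}$. By Example~\ref{ex:regular}, the Clarke regularity of $C$ makes $\iota_{C}$ subdifferentially regular, and by Example~\ref{ex:normal}~\ref{enum:normal}, $\partial \iota_{C} = N_{C}$. The qualification $\partial^{\infty} J_{1}(\widebar{\bm{x}}) \cap (-\partial^{\infty} J_{2}(\widebar{\bm{x}})) = \{\bm{0}\} \cap (-N_{C}(\widebar{\bm{x}})) = \{\bm{0}\}$ follows from the step above, giving \eqref{eq:subdifferentially_f_C}. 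For \eqref{eq:chain2}, I apply the chain rule \eqref{eq:basic_chain} to the outer function $J_{1} \coloneqq h + g \circ \mathfrak{S}$ (subdifferentially regular, $\partial^{\infty} J_{1} = \{\bm{0}\}$) and the $C^{1}$ inner mapping $\mathcal{F} \coloneqq F$; again the qualification $\{\bm{0}\} \cap \mathrm{Ker}((\mathrm{D}F(\widebar{\bm{y}}))^{*}) = \{\bm{0}\}$ is automatic, which delivers the claimed formula.

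The only non-routine step is ensuring that all regularity and qualification hypotheses are actually in place, in particular subdifferential regularity of $g \circ \mathfrak{S}$. This is where the global Lipschitz continuity of $g$ (not merely weak convexity) is essential, as it collapses every horizontal subdifferential to $\{\bm{0}\}$ and removes every qualification obstacle; without this, the chain rule could fail for general weakly convex $g$ composed with nonlinear $\mathfrak{S}$. Beyond this observation, the proof is a bookkeeping exercise on top of Facts~\ref{fact:strictly_continuous} and~\ref{fact:chain_rule} and Example~\ref{ex:regular}.
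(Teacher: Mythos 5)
Your proposal does not prove the statement it was assigned. The statement here is Fact~\ref{fact:chain_rule} itself, i.e., the chain rule \eqref{eq:basic_chain}--\eqref{eq:chain_horizontal} and the sum rules \eqref{eq:sum_general} and \eqref{eq:sum_smooth_subgradient}--\eqref{eq:sum_smooth}; yet your opening sentence invokes Fact~\ref{fact:chain_rule} as a known tool, and the ``both identities'' you actually derive are \eqref{eq:subdifferentially_f_C} and \eqref{eq:chain2}, which are the content of Lemma~\ref{lemma:subdifferentially}, a downstream application. Read against the assigned statement, the argument is circular: you offer no reason why the qualification $\partial^{\infty} J_{1}(\mathcal{F}(\widebar{\bm{y}})) \cap \mathrm{Ker}((\mathrm{D}\mathcal{F}(\widebar{\bm{y}}))^{*}) = \{\bm{0}\}$ yields \eqref{eq:basic_chain}, nor any derivation of the sum rule. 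Note that the paper does not reprove these identities either; its ``proof'' is the citation to~\cite[Theorem 10.6, Corollary 10.9, Exercise 8.8]{Rockafellar-Wets98}, where the actual argument runs through the normal-cone and coderivative calculus for epigraphs and preimages under the constraint qualification --- machinery entirely absent from your write-up. So, measured against the statement in question, the proof is missing rather than merely incomplete.

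That said, if your target had been Lemma~\ref{lemma:subdifferentially}, your argument coincides essentially step for step with the paper's Appendix~A proof: there, too, Step~1 combines the subdifferential regularity of $g$ (weak convexity, Example~\ref{ex:regular}) with $\partial^{\infty} g \equiv \{\bm{0}\}$ from Lipschitz continuity via Fact~\ref{fact:strictly_continuous}, applies the chain rule to conclude regularity of $g\circ\mathfrak{S}$ together with $\partial^{\infty}(g\circ\mathfrak{S})\equiv\{\bm{0}\}$ via \eqref{eq:chain_horizontal}, and then invokes the smooth-summand rules \eqref{eq:sum_smooth_subgradient}--\eqref{eq:sum_smooth}; Step~2 matches your applications of \eqref{eq:sum_general} and \eqref{eq:basic_chain} with $J_{2}\coloneqq \iota_{C}$ and $\mathcal{F}\coloneqq F$. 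One inaccuracy in your closing commentary deserves correction: global Lipschitz continuity of $g$ is not what is ``essential.'' Since $g:\mathcal{Z}\to\mathbb{R}$ is finite-valued and weakly convex, it is automatically locally Lipschitz continuous (write $g=\bigl(g+\tfrac{\eta}{2}\norm{\cdot}^{2}\bigr)+\bigl(-\tfrac{\eta}{2}\norm{\cdot}^{2}\bigr)$ and use that finite convex functions are locally Lipschitz, as the paper itself notes in a footnote within the proof of Theorem~\ref{theorem:stationary}), and local Lipschitz continuity at the relevant point is all that Fact~\ref{fact:strictly_continuous} requires to force $\partial^{\infty}g=\{\bm{0}\}$; hence the qualification obstacle vanishes for any finite-valued weakly convex $g$, with no appeal to a global Lipschitz constant.
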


\begin{proof}[Proof of Lemma~\ref{lemma:subdifferentially}]
  (Step 1)
  We show the subdifferential regularity of
  $\cost=h+g\circ \mathfrak{S}$
  and
  $\subdiff^{\infty}\cost(\widebar{\bm{x}}) = \{\bm{0}\}\ (\widebar{\bm{x}} \in \mathcal{X})$.
  By the latter part of Fact~\ref{fact:chain_rule}~\ref{enum:sum_rule_general} and~\eqref{eq:sum_smooth} with the continuous differentiability of
  $h$,
  it suffices to show (i) the subdifferential regularity of
  $g\circ\mathfrak{S}$
  and (ii)
  $\subdiff^{\infty} (g\circ\mathfrak{S})(\widebar{\bm{x}}) = \{\bm{0}\}\ (\widebar{\bm{x}} \in \mathcal{X})$.
  Recall that
  $g$
  is subdifferentially regular from its weak convexity (see Example~\ref{ex:regular}).
  Since
  $g$
  is (locally) Lipschitz continuous,
  Fact~\ref{fact:strictly_continuous} ensures
  $\subdiff^{\infty} g(\mathfrak{S}(\widebar{\bm{x}})) = \{\bm{0}\}$,
  implying thus the constraint qualification in Fact~\ref{fact:chain_rule}~\ref{enum:basic_chain_rule} holds with
  $J_{1} \coloneqq g$
  and
  $\mathcal{F}\coloneqq \mathfrak{S}$.
  Then, Fact~\ref{fact:chain_rule}~\ref{enum:basic_chain_rule} implies
  (i) the subdifferential regularity of
  $g\circ \mathfrak{S}$,
  and
  (ii)
  $\subdiff^{\infty}(g\circ\mathfrak{S})(\widebar{\bm{x}}) \overset{\eqref{eq:chain_horizontal}}{=}(\mathrm{D}\mathfrak{S}(\widebar{\bm{x}}))^{*}[\subdiff^{\infty} g(\mathfrak{S}(\widebar{\bm{x}}))] = \{\bm{0}\}\ (\widebar{\bm{x}} \in \mathcal{X})$.

  (Step 2)
  Recall that
  $\iota_{C}$
  is subdifferentially regular (see Example~\ref{ex:regular} and Example~\ref{ex:normal}), and lower semicontinuous due to the closedness of
  $C$.
  Then,
  by letting
  $J_{1} \coloneqq \cost$,
  $J_{2} \coloneqq \iota_{C}$,
  and
  $\mathcal{F} \coloneqq \param $,
  Fact~\ref{fact:chain_rule} yields that
  (i) the subdifferential regularities of
  $\cost \circ \param$
  and
  $\cost + \iota_{C}$;
  (ii)
  the equalities~\eqref{eq:subdifferentially_f_C} and~\eqref{eq:chain2} from~\eqref{eq:sum_general} and~\eqref{eq:basic_chain},
  where
  $J_{1}$
  is subdifferentially regular and
  $\subdiff^{\infty}J_{1}(\widebar{\bm{x}}) = \{\bm{0}\}\ (\widebar{\bm{x}} \in \mathcal{X})$
  from (Step 1).
\end{proof}

\section{Proof of Proposition~\ref{proposition:extension:Lipschitz_f}}\label{appendix:extension:Lipschitz_f}

To prove Proposition~\ref{proposition:extension:Lipschitz_f}, we use the following lemma for two times.
\begin{lemma}[Lipschitz continuity of gradient of composite function] \label{lemma:Lipschitz_composite}
  Let
  $\mathcal{H},\mathcal{K}$
  be Euclidean spaces.
  Let
  $\mathcal{F}:\mathcal{K}\to \mathcal{H}$
  and
  $J:\mathcal{H} \to \mathbb{R}$
  be continuously differentiable,
  and let
  $(\emptyset \neq)E \subset \mathcal{K}$
  be a subset of
  $\mathcal{K}$.
  Assume that
  \begin{enumerate}[label=(\roman*)]
    \item \label{enum:c1}
          The operator norm of
          $(\mathrm{D}\mathcal{F}(\cdot))^{*}$
          is bounded above by
          $\kappa_{\mathcal{F}} > 0$
          over
          $\mathrm{Conv}(E)$,
          and thus
          $\mathcal{F}$
          is Lipschitz continuous with a Lipschitz constant
          $\kappa_{\mathcal{F}}$
          over
          $\mathrm{Conv}(E)$
          by~\cite[Thm. 9.2 and Thm. 9.7]{Rockafellar-Wets98}.
    \item \label{enum:c2}
          $(\mathrm{D}\mathcal{F}(\cdot))^{*}$
          is Lipschitz continuous with a Lipschitz constant
          $L_{\mathrm{D}\mathcal{F}} > 0$
          over
          $E$.
    \item \label{enum:c3}
          The norm of
          $\nabla J$
          is bounded above by
          $\kappa_{J} > 0$
          over
          $\mathcal{F}(E)$.
    \item \label{enum:c4}
          $\nabla J$
          is Lipschitz continuous with a Lipschitz constant
          $L_{\nabla J} > 0$
          over
          $\mathcal{F}(E)$.
  \end{enumerate}
  Then,
  $\nabla (J\circ \mathcal{F})$
  is Lipschitz continuous with a Lipschitz constant
  $\kappa_{\mathcal{F}}^{2}L_{\nabla J} + \kappa_{J}L_{\mathrm{D}\mathcal{F}}$
  over
  $E$.
\end{lemma}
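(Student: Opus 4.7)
The plan is a straightforward chain-rule argument together with an add-and-subtract trick to separate the two sources of variation in $\nabla(J\circ \mathcal{F})$. Using the identity $\nabla(J\circ \mathcal{F})(\bm{k}) = (\mathrm{D}\mathcal{F}(\bm{k}))^{*}[\nabla J(\mathcal{F}(\bm{k}))]$ recalled in the Notation section, I would write, for any $\bm{k}_{1},\bm{k}_{2}\in E$,
\begin{align}
&\nabla(J\circ \mathcal{F})(\bm{k}_{1}) - \nabla(J\circ \mathcal{F})(\bm{k}_{2}) \\
&\quad = (\mathrm{D}\mathcal{F}(\bm{k}_{1}))^{*}[\nabla J(\mathcal{F}(\bm{k}_{1})) - \nabla J(\mathcal{F}(\bm{k}_{2}))] + \bigl[(\mathrm{D}\mathcal{F}(\bm{k}_{1}))^{*} - (\mathrm{D}\mathcal{F}(\bm{k}_{2}))^{*}\bigr][\nabla J(\mathcal{F}(\bm{k}_{2}))].
\end{align}
Taking norms and applying the triangle inequality yields the two-term bound
\begin{equation}
\norm{\nabla(J\circ \mathcal{F})(\bm{k}_{1}) - \nabla(J\circ \mathcal{F})(\bm{k}_{2})} \leq \norm{(\mathrm{D}\mathcal{F}(\bm{k}_{1}))^{*}}_{\rm op}\norm{\nabla J(\mathcal{F}(\bm{k}_{1})) - \nabla J(\mathcal{F}(\bm{k}_{2}))} + \norm{(\mathrm{D}\mathcal{F}(\bm{k}_{1}))^{*} - (\mathrm{D}\mathcal{F}(\bm{k}_{2}))^{*}}_{\rm op}\norm{\nabla J(\mathcal{F}(\bm{k}_{2}))}.
\end{equation}

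Next, I would estimate the four factors using the assumptions. The bound $\norm{(\mathrm{D}\mathcal{F}(\bm{k}_{1}))^{*}}_{\rm op} \leq \kappa_{\mathcal{F}}$ follows from \ref{enum:c1} since $\bm{k}_{1}\in E \subset \mathrm{Conv}(E)$. For the second factor in the first term, I invoke assumption \ref{enum:c4} to get $\norm{\nabla J(\mathcal{F}(\bm{k}_{1})) - \nabla J(\mathcal{F}(\bm{k}_{2}))} \leq L_{\nabla J}\norm{\mathcal{F}(\bm{k}_{1}) - \mathcal{F}(\bm{k}_{2})}$, and then use the Lipschitz continuity of $\mathcal{F}$ with constant $\kappa_{\mathcal{F}}$ over the convex set $\mathrm{Conv}(E)$ (already noted in \ref{enum:c1}) applied on the line segment $[\bm{k}_{1},\bm{k}_{2}] \subset \mathrm{Conv}(E)$ to conclude $\norm{\mathcal{F}(\bm{k}_{1}) - \mathcal{F}(\bm{k}_{2})} \leq \kappa_{\mathcal{F}}\norm{\bm{k}_{1}-\bm{k}_{2}}$. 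The two factors in the second term are controlled directly by \ref{enum:c2} and \ref{enum:c3} respectively.

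Combining these four estimates yields
\begin{equation}
\norm{\nabla(J\circ \mathcal{F})(\bm{k}_{1}) - \nabla(J\circ \mathcal{F})(\bm{k}_{2})} \leq \kappa_{\mathcal{F}}\cdot L_{\nabla J}\kappa_{\mathcal{F}}\norm{\bm{k}_{1}-\bm{k}_{2}} + L_{\mathrm{D}\mathcal{F}}\norm{\bm{k}_{1}-\bm{k}_{2}}\cdot \kappa_{J} = \bigl(\kappa_{\mathcal{F}}^{2}L_{\nabla J} + \kappa_{J}L_{\mathrm{D}\mathcal{F}}\bigr)\norm{\bm{k}_{1}-\bm{k}_{2}},
\end{equation}
which is the desired Lipschitz estimate over $E$. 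There is no real obstacle in this argument; the only subtle point worth emphasizing is why the intermediate Lipschitz estimate on $\mathcal{F}$ is available along the segment joining $\bm{k}_{1}$ and $\bm{k}_{2}$, which is precisely why assumption \ref{enum:c1} is phrased on $\mathrm{Conv}(E)$ rather than on $E$ alone (so that the classical mean-value inequality~\cite[Thm.~9.2]{Rockafellar-Wets98} can be applied along that segment).
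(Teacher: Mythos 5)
Your proposal is correct and follows essentially the same argument as the paper's proof: the identical add-and-subtract decomposition of $(\mathrm{D}\mathcal{F}(\bm{k}_{1}))^{*}[\nabla J(\mathcal{F}(\bm{k}_{1}))]-(\mathrm{D}\mathcal{F}(\bm{k}_{2}))^{*}[\nabla J(\mathcal{F}(\bm{k}_{2}))]$, the same four estimates from assumptions \ref{enum:c1}--\ref{enum:c4}, and the same final constant $\kappa_{\mathcal{F}}^{2}L_{\nabla J}+\kappa_{J}L_{\mathrm{D}\mathcal{F}}$. Your closing remark on why assumption \ref{enum:c1} is stated over $\mathrm{Conv}(E)$ (so the mean-value inequality applies along the segment joining $\bm{k}_{1}$ and $\bm{k}_{2}$) correctly makes explicit a point the paper handles implicitly via the citation to \cite[Theorem 9.2]{Rockafellar-Wets98} inside the assumption itself.
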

\begin{proof}
  For
  $\bm{x}_{1},\bm{x}_{2} \in E$,
  we have  {
      \small
      \begin{align}
         & \norm{\nabla (J\circ \mathcal{F})(\bm{x}_{1}) - \nabla (J\circ \mathcal{F})(\bm{x}_{2})}
        = \norm{(\mathrm{D}\mathcal{F}(\bm{x}_{1}))^{*}[\nabla J(\mathcal{F}(\bm{x}_{1}))] - (\mathrm{D}\mathcal{F}(\bm{x}_{2}))^{*}[\nabla J(\mathcal{F}(\bm{x}_{2}))]}          \\
         & \leq \norm{(\mathrm{D}\mathcal{F}(\bm{x}_{1}))^{*}[\nabla J(\mathcal{F}(\bm{x}_{1})) - \nabla J(\mathcal{F}(\bm{x}_{2}))]}
        + \norm{(\mathrm{D}\mathcal{F}(\bm{x}_{1})-\mathrm{D}\mathcal{F}(\bm{x}_{2}))^{*}[\nabla J(\mathcal{F}(\bm{x}_{2}))]}                                                     \\
         & \overset{\ref{enum:c2}}{\leq} \norm{(\mathrm{D}\mathcal{F}(\bm{x}_{1}))^{*}}_{\mathrm{op}}\norm{\nabla J(\mathcal{F}(\bm{x}_{1})) - \nabla J(\mathcal{F}(\bm{x}_{2}))}
        + L_{\mathrm{D}\mathcal{F}}\norm{\bm{x}_{1} - \bm{x}_{2}}\norm{\nabla J(\mathcal{F}(\bm{x}_{2}))}                                                                         \\
         & \overset{\ref{enum:c1},\ref{enum:c3}}{\leq} \kappa_{\mathcal{F}}\norm{\nabla J(\mathcal{F}(\bm{x}_{1})) -\nabla J(\mathcal{F}(\bm{x}_{2}))}
        + \kappa_{J}L_{\mathrm{D}\mathcal{F}}\norm{\bm{x}_{1} - \bm{x}_{2}}                                                                                                       \\
         & \overset{\ref{enum:c4},\ref{enum:c1}}{\leq} \kappa_{\mathcal{F}}^{2}L_{\nabla J}\norm{\bm{x}_{1}-\bm{x}_{2}}
        + \kappa_{J}L_{\mathrm{D}\mathcal{F}}\norm{\bm{x}_{1} - \bm{x}_{2}}
        = (\kappa_{\mathcal{F}}^{2}L_{\nabla J} +\kappa_{J}L_{\mathrm{D}\mathcal{F}})\norm{\bm{x}_{1}-\bm{x}_{2}}.
      \end{align}
    }
\end{proof}

\begin{proof}[Proof of Proposition~\ref{proposition:extension:Lipschitz_f}]
  From Fact~\ref{fact:moreau}~\ref{enum:gradient_moreau},
  recall that
  $\nabla \moreau{g}{\mu}$
  is Lipschitz continuous with the Lipschitz constant
  $L_{\nabla \moreau{g}{\mu}}\coloneqq \max\left\{\mu^{-1},\frac{\eta}{1-\eta\mu}\right\} = \mu^{-1}$
  by
  $\mu \in (0, 2^{-1}\eta^{-1}]$.

  (a)
  Since
  $\nabla \moreau{g}{\mu}$
  is Lipschitz continuous with the Lipschitz constant
  $L_{\nabla\moreau{g}{\mu}}$
  and
  $\norm{\nabla \moreau{g}{\mu}(\bm{z})} \leq L_{g}\ (\bm{z} \in \mathcal{Z})$
  by~\eqref{eq:moreau_gradient_bound}, Lemma~\ref{lemma:Lipschitz_composite} with setting
  $(\mathcal{H},\mathcal{K},\mathcal{F},J,E) \coloneqq  (\mathcal{Z},\mathcal{X},\mathfrak{S},\moreau{g}{\mu},C)$
  ensures the Lipschitz continuity of
  $\nabla (\moreau{g}{\mu}\circ \mathfrak{S})$
  with a Lipschitz constant
  $L_{\nabla (\moreau{g}{\mu}\circ \mathfrak{S})}\coloneqq  \kappa_{\mathfrak{S}}^{2}L_{\nabla \moreau{g}{\mu}} + L_{g}L_{\mathrm{D}\mathfrak{S}}=\kappa_{\mathfrak{S}}^{2}\mu^{-1} + L_{g}L_{\mathrm{D}\mathfrak{S}}$.
  Thus,
  $\nabla (h+\moreau{g}{\mu}\circ \mathfrak{S})$
  is Lipschitz continuous over
  $C$
  with a Lipschitz constant
  $L_{\nabla (h+\moreau{g}{\mu}\circ \mathfrak{S})} = L_{\nabla h} + L_{g}L_{\mathrm{D}\mathfrak{S}} + \kappa_{\mathfrak{S}}^{2}\mu^{-1}$,
  where
  $L_{\nabla h}$
  is the Lipschitz constant of
  $\nabla h$
  by the setting of Problem~\ref{problem:origin}.

  (b)
  To apply Lemma~\ref{lemma:Lipschitz_composite}, we derive an upper bound
  $\kappa_{h+\moreau{g}{\mu}\circ \mathfrak{S}}\coloneqq \kappa_{h} + \kappa_{\mathfrak{S}}L_{g} > 0$
  of
  $\sup_{\bm{x}\in C}\norm{\nabla (h+\moreau{g}{\mu}\circ \mathfrak{S})(\bm{x})}$
  as:
  \begin{equation}
    \sup_{\bm{x}\in C}\norm{\nabla (h+\moreau{g}{\mu}\circ \mathfrak{S})(\bm{x})}
    \leq \sup_{\bm{x}\in C}\norm{\nabla h(\bm{x})} + \sup_{\bm{x}\in C}\norm{\nabla (\moreau{g}{\mu}\circ \mathfrak{S})(\bm{x})}
    \leq    \kappa_{h} + \kappa_{\mathfrak{S}}L_{g},
    \label{eq:nabla_f_bound}
  \end{equation}
  where we used the inequality
  $\norm{\nabla (\moreau{g}{\mu}\circ \mathfrak{S})(\bm{x})} = \norm{(\mathrm{D}\mathfrak{S}(\bm{x}))^{*}[\nabla \moreau{g}{\mu}(\mathfrak{S}(\bm{x}))]} \leq \norm{(\mathrm{D}\mathfrak{S}(\bm{x}))^{*}}_{\mathrm{op}}\norm{\nabla \moreau{g}{\mu}(\mathfrak{S}(\bm{x}))} \overset{\eqref{eq:moreau_gradient_bound}}{\leq} \kappa_{\mathfrak{S}}L_{g}\ (\bm{x} \in C)$.
  Then, by applying Lemma~\ref{lemma:Lipschitz_composite} with setting
  $(\mathcal{H},\mathcal{K},\mathcal{F},J,E) \coloneqq  (\mathcal{X},\mathcal{Y},\param ,h+\moreau{g}{\mu}\circ \mathfrak{S},\mathcal{Y})$
  together with Proposition~\ref{proposition:extension:Lipschitz_f}~\ref{enum:Lipschitz_g_G},
  $\nabla ((h+\moreau{g}{\mu}\circ \mathfrak{S})\circ \param )$
  is Lipschitz continuous with a Lipschitz constant
  $L_{\nabla ((h+\moreau{g}{\mu}\circ \mathfrak{S})\circ \param )}=\kappa_{\param }^{2}L_{\nabla (h+\moreau{g}{\mu}\circ \mathfrak{S})} + \kappa_{h+\moreau{g}{\mu}\circ \mathfrak{S}}L_{\mathrm{D}\param }$
  over
  $\mathcal{Y}$.
  By substituting
  $L_{\nabla (h+\moreau{g}{\mu}\circ \mathfrak{S})}=L_{\nabla h}  + L_{g}L_{\mathrm{D}\mathfrak{S}} +  \kappa_{\mathfrak{S}}^{2}\mu^{-1}$
  and
  $\kappa_{h+\moreau{g}{\mu}\circ \mathfrak{S}}=\kappa_{h} + \kappa_{\mathfrak{S}}L_{g}$,
  we obtain the desired Lipschitz constant.
\end{proof}

\section{Proof of Theorem~\ref{theorem:stationary}}\label{appendix:theorem_stationary}
To prove Theorem~\ref{theorem:stationary}, we need the following lemma.

\begin{lemma}[Subdifferential limit]\label{lemma:outer_continuous_image}
  Let
  $\mathcal{Y}$
  and
  $\mathcal{Z}$
  be Euclidean spaces.
  Let
  $J:\mathcal{Z} \to \mathbb{R}$
  be locally Lipschitz continuous at
  $\widebar{\bm{z}} \in \mathcal{Z}$,
  and
  $A:\mathcal{Z} \to \mathcal{Y}$
  a linear operator.
  Suppose that
  (i)
  an arbitrarily given
  $(\widebar{\bm{z}}_{n})_{n=1}^{\infty} \subset \mathcal{Z}$
  satisfies
  $\lim_{n\to\infty}\widebar{\bm{z}}_{n} = \widebar{\bm{z}}$;
  (ii)
  an arbitrarily given
  $(A_{n})_{n=1}^{\infty}$
  with linear operators
  $A_{n}:\mathcal{Z} \to \mathcal{Y}\ (n\in \mathbb{N})$
  satisfies
  $\lim_{n\to\infty} A_{n} = A$.
  Then,
  \begin{equation}
    \Limsup_{n\to\infty} A_{n}(\Lsubdiff  J(\widebar{\bm{z}}_{n})) \subset A\left(\Limsup_{n\to\infty}\Lsubdiff  J(\widebar{\bm{z}}_{n})\right)  \subset A(\Lsubdiff  J(\widebar{\bm{z}})). \label{eq:subdifferential_inclusion}
  \end{equation}
\end{lemma}
\begin{proof}
  The last inclusion in~\eqref{eq:subdifferential_inclusion} can be checked by
  \begin{equation}
    \Limsup_{n\to\infty} \Lsubdiff  J(\widebar{\bm{z}}_{n})
    \subset \bigcup\limits_{\mathcal{Z} \ni \bm{z}_{n}\to\widebar{\bm{z}}} \Limsup_{n\to\infty} \Lsubdiff  J(\bm{z}_{n})
    \overset{\eqref{eq:outer_limit_another}}{=} \Limsup_{\mathcal{Z} \ni \bm{z}\to\widebar{\bm{z}}}\Lsubdiff  J(\bm{z})
    = \Lsubdiff  J(\widebar{\bm{z}}),
  \end{equation}
  where the last equality follows by the
    {\em outer semicontinuity} of the limiting subdifferential~\cite[Prop. 8.7]{Rockafellar-Wets98}
  with the continuity of
  $J$
  at
  $\widebar{\bm{z}}$.

  To show the first inclusion in~\eqref{eq:subdifferential_inclusion}, let
  $\bm{v} \in \Limsup_{n\to\infty} A_{n}(\Lsubdiff  J(\widebar{\bm{z}}_{n}))$,
  i.e., there exists a sequence
  $(\bm{v}_{n})_{n=1}^{\infty} \subset \mathcal{Y}$
  with
  $\bm{v}_{n} \in A_{n}(\Lsubdiff  J(\widebar{\bm{z}}_{n}))$
  such that its subsequence
  $(\bm{v}_{m(l)})_{l=1}^{\infty}$
  converges to
  $\bm{v}$,
  where
  $m:\mathbb{N}\to\mathbb{N}$
  is monotonically increasing.
  Then, there exists a sequence
  $(\bm{u}_{n})_{n=1}^{\infty} \subset \mathcal{Z}$
  with
  $\bm{u}_{n}\in \Lsubdiff  J(\widebar{\bm{z}}_{n})$
  satisfying
  $\bm{v}_{n} = A_{n}(\bm{u}_{n})$.
  From Fact~\ref{fact:strictly_continuous}, the local Lipschitz continuity of
  $J$
  at
  $\widebar{\bm{z}}$
  ensures the boundedness of
  $\bigcup_{n=n_{0}}^{\infty} \Lsubdiff  J(\widebar{\bm{z}}_{n})$
  with a sufficiently large
  $n_{0} \in \mathbb{N}$.
  This implies the boundedness of
  $(\bm{u}_{n})_{n=1}^{\infty}$,
  thereby its subsequence
  $(\bm{u}_{m(l)})_{l=1}^{\infty}$
  converges to some
  $\bm{u} \in \mathcal{Z}$
  (by passing to further subsequence if necessary),
  i.e.,
  $\bm{u} \in \Limsup_{n\to\infty} \Lsubdiff  J(\widebar{\bm{z}}_{n})$.
  We get
  $\bm{v} = \lim_{l\to\infty}A\bm{u}_{m(l)}$
  by
  \begin{align}
    \norm{\bm{v} - A\bm{u}_{m(l)}}
     & \leq \norm{\bm{v} - \bm{v}_{m(l)}} + \norm{\bm{v}_{m(l)} - A_{m(l)}\bm{u}_{m(l)}} + \norm{A_{m(l)}\bm{u}_{m(l)} - A\bm{u}_{m(l)}}                             \\
     & = \norm{\bm{v} - \bm{v}_{m(l)}}  + \norm{A_{m(l)}\bm{u}_{m(l)} - A\bm{u}_{m(l)}}\quad (\because \bm{v}_{m(l)} = A_{m(l)}\bm{u}_{m(l)})                        \\
     & \leq \norm{\bm{v} - \bm{v}_{m(l)}} +  \norm{A_{m(l)}-A}_{\rm op}\norm{\bm{u}_{m(l)}}
    \overset{l\to \infty}{\to} 0 \label{eq:limit_Au}                                                                                                                 \\
     & \hspace{10em}(\because \bm{v}_{m(l)} \overset{l\to\infty}{\to} \bm{v},\  A_{m(l)}\overset{l\to\infty}{\to} A,\ \bm{u}_{m(l)}\overset{l\to\infty}{\to}\bm{u}).
  \end{align}
  Thus,
  $\bm{v} \overset{\eqref{eq:limit_Au}}{=} \lim_{l\to\infty}A\bm{u}_{m(l)}= A\lim_{l\to\infty}\bm{u}_{m(l)} = A\bm{u} \in A\left(\Limsup_{n\to\infty} \Lsubdiff  J(\widebar{\bm{z}}_{n})\right)$.

\end{proof}

\begin{proof}[Proof of Theorem~\ref{theorem:stationary}]
  (a)
  Let
  $\bm{v} \in \Limsup\limits_{\mathbb{R}_{++} \ni \mu\searrow0,\ \bm{y}\to\widebar{\bm{y}}} \nabla (\moreau{J}{\mu}\circ \mathcal{F})(\bm{y})$.
  Then, there exist
  (i)
  $(\bm{y}_{n})_{n=1}^{\infty} \subset\mathcal{Y}$
  with
  $\lim_{n\to\infty}\bm{y}_{n} = \widebar{\bm{y}}$;
  (ii)
  $(\mu_{n})_{n=1}^{\infty}\subset (0,\eta^{-1})$
  with
  $\mu_{n}\searrow 0$;
  and
  (iii)
  $\bm{v}_{n} \coloneqq  \nabla (\moreau{J}{\mu_{n}}\circ \mathcal{F})(\bm{y}_{n}) =  (\mathrm{D}\mathcal{F}(\bm{y}_{n}))^{*}[\nabla \moreau{J}{\mu_{n}}(\mathcal{F}(\bm{y}_{n}))]$
  with
  $\lim_{n\to\infty}\bm{v}_{n} = \bm{v}$.
  From Fact~\ref{fact:moreau}~\ref{enum:prox_subdifferential}, we have
  $\nabla \moreau{J}{\mu_{n}}(\mathcal{F}(\bm{y}_{n})) \in \Lsubdiff  J(\prox{\mu_{n}J}(\mathcal{F}(\bm{y}_{n})))\ (n\in \mathbb{N})$.
  This implies
  $\bm{v}_{n} \in (\mathrm{D}\mathcal{F}(\bm{y}_{n}))^{*}[\Lsubdiff  J(\prox{\mu_{n}J}(\mathcal{F}(\bm{y}_{n})))]\ (n \in \mathbb{N})$,
  and thus
  \begin{equation}
    \bm{v} \in \Limsup_{n\to\infty}(\mathrm{D}\mathcal{F}(\bm{y}_{n}))^{*}[\Lsubdiff  J(\prox{\mu_{n}J}(\mathcal{F}(\bm{y}_{n})))]. \label{eq:v_outer_limit}
  \end{equation}

  For
  $n\in \mathbb{N}$,
  let
  $\widebar{\bm{z}}_{n} \coloneqq \prox{\mu_{n}J}(\mathcal{F}(\bm{y}_{n}))$
  and
  $A_{n}\coloneqq (\mathrm{D}\mathcal{F}(\bm{y}_{n}))^{*}$.
  Then, we have
  $\lim_{n\to \infty} A_{n} = (\mathrm{D}\mathcal{F}(\widebar{\bm{y}}))^{*}\eqqcolon A$
  from the continuous differentiability of
  $\mathcal{F}$.
  Recall that
  $J$
  is locally Lipschitz continuous\footnote{
    Since every finite convex function is locally Lipschitz continuous~\cite[Exm. 9.14]{Rockafellar-Wets98},
    $J+\frac{\eta}{2}\norm{\cdot}^{2}$
    and
    $\frac{\eta}{2}\norm{\cdot}^{2}$
    are locally Lipschitz continuous due to the weak convexity of
    $J$.
    Then,
    $J = (J+\frac{\eta}{2}\norm{\cdot}^{2}) + (-\frac{\eta}{2}\norm{\cdot}^{2})$
    is locally Lipschitz continuous because the sum of locally Lipschitz continuous functions is locally Lipschitz continuous~\cite[Exe. 9.8]{Rockafellar-Wets98}.
  }, and thus
  $\Lsubdiff  J(\bm{z}) \neq \emptyset$
  for all
  $\bm{z} \in \mathcal{Z}$,
  by Fact~\ref{fact:strictly_continuous},
  i.e.,
  $\dom{\Lsubdiff  J} = \mathcal{Z}$.
  From
  $\dom{\Lsubdiff  J} = \mathcal{Z}$,
  $\lim_{n\to\infty} \mathcal{F}(\bm{y}_{n}) = \mathcal{F}(\widebar{\bm{y}})$,
  and
  $\mu_{n} \searrow 0$,
  Fact~\ref{fact:moreau}~\ref{enum:prox_convergence} implies
  $\lim_{n\to\infty}\widebar{\bm{z}}_{n} = \lim_{n\to\infty}\prox{\mu_{n}J}(\mathcal{F}(\bm{y}_{n})) = P_{\overline{\dom{\Lsubdiff J}}}(\mathcal{F}(\widebar{\bm{y}})) = \mathcal{F}(\widebar{\bm{y}}) \eqqcolon \widebar{\bm{z}}$.
  From the local Lipschitz continuity of
  $J$,
  Lemma~\ref{lemma:outer_continuous_image}
  with
  $\lim_{n\to\infty}\widebar{\bm{z}}_{n} = \widebar{\bm{z}}$
  and
  $\lim_{n\to\infty}A_{n} = A$
  yields
  \begin{align}
     & \Limsup_{n\to\infty}(\mathrm{D}\mathcal{F}(\bm{y}_{n}))^{*}[\Lsubdiff  J(\prox{\mu_{n}J}(\mathcal{F}(\bm{y}_{n})))]
    = \Limsup_{n\to\infty} A_{n}(\Lsubdiff  J(\widebar{\bm{z}}_{n}))
    \overset{\eqref{eq:subdifferential_inclusion}}{\subset} A(\Lsubdiff  J(\widebar{\bm{z}}))                              \\
     & = (\mathrm{D}\mathcal{F}(\widebar{\bm{y}}))^{*}[\Lsubdiff  J(\mathcal{F}(\widebar{\bm{y}}))]
    \overset{\eqref{eq:basic_chain}}{=} \Lsubdiff  (J\circ \mathcal{F})(\widebar{\bm{y}}),
  \end{align}
  from which
  $\bm{v} \in \Lsubdiff  (J\circ \mathcal{F})(\widebar{\bm{y}})$
  holds by~\eqref{eq:v_outer_limit}.

  (b)
  Theorem~\ref{theorem:stationary}~\ref{enum:gradient_consistency} with
  $J\coloneqq g$
  and
  $\mathcal{F}\coloneqq \mathfrak{S}\circ \param$
  yields
  $\Lsubdiff  (g\circ \mathfrak{S}\circ \param )(\widebar{\bm{y}}) \supset \Limsup\limits_{\mathbb{R}_{++}\ni \mu\searrow 0,\ \mathcal{Y}\ni \bm{y}\to\widebar{\bm{y}}} \nabla (\moreau{g}{\mu}\circ \mathfrak{S}\circ \param )(\bm{y})$.
  The inclusion in~\eqref{eq:gradient_consistency_problem} is obtained by
    {
      \thickmuskip=0.5\thickmuskip
      \medmuskip=0.5\medmuskip
      \thinmuskip=0.5\thinmuskip
      \arraycolsep=0.5\arraycolsep
      \begin{align}
         & \Lsubdiff  ((h+g\circ\mathfrak{S})\circ \param )(\widebar{\bm{y}})
        = \Lsubdiff  (h\circ \param  + g\circ \mathfrak{S}\circ \param )(\widebar{\bm{y}})
        \overset{\eqref{eq:sum_smooth_subgradient}}{=}\nabla (h\circ \param )(\widebar{\bm{y}})+ \Lsubdiff  (g\circ\mathfrak{S}\circ \param )(\widebar{\bm{y}})                                                      \\
         & \supset \nabla (h\circ \param )(\widebar{\bm{y}}) + \Limsup_{\mathbb{R}_{++}\ni \mu\searrow 0,\ \mathcal{Y}\ni \bm{y}\to\widebar{\bm{y}}} \nabla (\moreau{g}{\mu}\circ \mathfrak{S}\circ \param )(\bm{y})
        = \Limsup_{\mathbb{R}_{++} \ni \mu\searrow 0,\ \mathcal{Y}\ni \bm{y}\to\widebar{\bm{y}}} \nabla ((h+\moreau{g}{\mu}\circ\mathfrak{S})\circ \param )(\bm{y}), \hspace{-6em}
      \end{align}}%
  where the last equality follows by the continuity\footnote{
    Indeed, the continuity of
    $\nabla (h\circ \param )$
    yields the following relation:
    \begin{align}
       & \bm{v} \in \nabla (h\circ \param )(\widebar{\bm{y}}) +
      \Limsup_{\mathbb{R}_{++}\ni \mu\searrow 0,\ \mathcal{Y}\ni \bm{y}\to\widebar{\bm{y}}} \nabla (\moreau{g}{\mu}\circ \mathfrak{S}\circ \param )(\bm{y})                                                    \\
       & \Leftrightarrow
      \exists (\bm{y}_{n})_{n=1}^{\infty} \subset \mathcal{Y},\ \exists (\mu_{n})_{n=1}^{\infty} \subset (0,\eta^{-1})\ \mathrm{such\ that}\ \widebar{\bm{y}}=\lim_{n\to\infty}\bm{y}_{n},\ \mu_{n}\searrow 0, \\
       & \hspace{2em} \mathrm{and}\
      \bm{v}=
      \nabla (h\circ \param )(\widebar{\bm{y}}) + \lim_{n\to\infty}\nabla (\moreau{g}{\mu_{n}}\circ \mathfrak{S}\circ \param )(\bm{y}_{n})
      =\lim_{n\to\infty}\left(\nabla (h\circ \param )(\bm{y}_{n}) + \nabla (\moreau{g}{\mu_{n}}\circ \mathfrak{S}\circ \param )(\bm{y}_{n})\right)                                                             \\
       & \Leftrightarrow
      \bm{v} \in \Limsup_{\mathbb{R}_{++}\ni \mu\searrow 0,\ \mathcal{Y}\ni \bm{y}\to\widebar{\bm{y}}} (\nabla (h\circ \param )(\bm{y}) + \nabla (\moreau{g}{\mu}\circ \mathfrak{S}\circ \param )(\bm{y}))
      = \Limsup_{\mathbb{R}_{++}\ni \mu\searrow 0,\ \mathcal{Y}\ni \bm{y}\to\widebar{\bm{y}}} \nabla ((h+\moreau{g}{\mu}\circ \mathfrak{S})\circ \param )(\bm{y}).
    \end{align}
  }
  of
  $\nabla (h\circ \param )$.
  Moreover, we have
  \begin{equation}
    \thickmuskip=0.1\thickmuskip
    \medmuskip=0.1\medmuskip
    \thinmuskip=0.1\thinmuskip
    \arraycolsep=0.1\arraycolsep
    \Lsubdiff  ((h+g\circ \mathfrak{S})\circ \param )(\widebar{\bm{y}})
    \overset{\eqref{eq:gradient_consistency_problem}}{\supset} \Limsup_{\mathbb{R}_{++} \ni \mu\searrow 0,\ \mathcal{Y}\ni \bm{y}\to\widebar{\bm{y}}} \nabla ((h+\moreau{g}{\mu}\circ \mathfrak{S})\circ \param )(\bm{y})
    \overset{\eqref{eq:outer_limit_another}}{\supset} \Limsup_{n\to\infty} \nabla ((h+\moreau{g}{\mu_{n}}\circ \mathfrak{S})\circ \param )(\bm{y}_{n}). \label{eq:inclusion_subdifferential}
    \hspace{-6em}
  \end{equation}
  Then, the inequality in~\eqref{eq:necessary_bound} is obtained by
  \begin{align}
     & d(\bm{0},\Lsubdiff  ((h+g\circ \mathfrak{S})\circ \param )(\widebar{\bm{y}}))
    \overset{\eqref{eq:inclusion_subdifferential}}{\leq} d\left(\bm{0},\Limsup_{n\to\infty} \nabla ((h+\moreau{g}{\mu_{n}}\circ \mathfrak{S})\circ \param )(\bm{y}_{n})\right) \\
     & = \liminf_{n\to\infty} d(\bm{0}, \nabla ((h+\moreau{g}{\mu_{n}}\circ \mathfrak{S})\circ \param )(\bm{y}_{n}))
    = \liminf_{n\to\infty}\norm{\nabla ((h+\moreau{g}{\mu_{n}}\circ \mathfrak{S})\circ \param )(\bm{y}_{n})},
  \end{align}
  where the first equality follows by the fact
  $\liminf_{n\to\infty} d(\bm{v},E_{n}) = d\left(\bm{v},\Limsup_{n\to\infty}E_{n}\right)$
  for
  $\bm{v} \in \mathcal{Y}$
  and a given sequence
  $(E_{n})_{n=1}^{\infty}$
  of subsets
  $E_{n} \subset \mathcal{Y}$ (see~\cite[Exe. 4.8]{Rockafellar-Wets98}).
\end{proof}

\section{Proof of Corollary~\ref{corollary:Cayley}} \label{appendix:Cayley}
To prove Corollary~\ref{corollary:Cayley}, we need the following fact and lemma.
\begin{fact}[{\cite[Lemma A.4 and proof in Proposition 2.9]{Kume-Yamada22}}]\label{fact:matrix}
  \
  \begin{enumerate}[label=(\alph*)]
    \item \label{enum:norm_upper}
          For
          $\bm{A} \in \mathbb{R}^{l\times m}$
          and
          $\bm{B} \in \mathbb{R}^{m\times n}$,
          $\|\bm{A}\bm{B}\|_{F} \leq \|\bm{A}\|_{2} \|\bm{B}\|_{F}$
          and
          $\|\bm{A}\bm{B}\|_{F} \leq \|\bm{A}\|_{F} \|\bm{B}\|_{2}$
          hold, where
          $\norm{\cdot}_{F}$
          and
          $\norm{\cdot}_{2}$
          denote the Frobenius norm and the spectral norm.
    \item \label{enum:IV_inv_norm}
          For
          $\bm{V} \in Q_{N,p}$,
          we have
          $\|(\bm{I}+\bm{V})^{-1}\|_{2} \leq 1$.
    \item \label{enum:norm_Lipschitz}
          For
          $\bm{V}_{1},\bm{V}_{2} \in Q_{N,p}$,
          $\left\| (\bm{I}+\bm{V}_1)^{-1} - (\bm{I}+\bm{V}_2)^{-1} \right\|_{F} \leq \|\bm{V}_1 - \bm{V}_2 \|_{F}$.
    \item  \label{enum:differential_Cayley}
          For
          $\bm{V},\bm{D}\in Q_{N,p}$,
          $\mathrm{D}\param_{\bm{S}}^{\rm Cay}(\bm{V})[\bm{D}] = -2\bm{S}(\bm{I}+\bm{V})^{-1}\bm{D}(\bm{I}+\bm{V})^{-1}\bm{I}_{N\times p}$
  \end{enumerate}
\end{fact}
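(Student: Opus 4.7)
The proof naturally splits into the two parts, each of which reduces to results already established in the paper. For part (a), the plan is to invoke Corollary~\ref{corollary:optimality_C_F}. The Stiefel manifold $\St(p,N)$ is a classical embedded submanifold of $\mathbb{R}^{N\times p}$ and is therefore Clarke regular by Example~\ref{ex:normal}~\ref{enum:manifold}, so it suffices to verify that $F = \Phi_{\bm{S}}^{-1}$ is a submersion at $\bm{V}^{\star}$, i.e., $\mathrm{Ran}(\mathrm{D}\Phi_{\bm{S}}^{-1}(\bm{V}^{\star})) = T_{\St(p,N)}(\bm{U}^{\star})$. Using the closed-form derivative $\mathrm{D}\Phi_{\bm{S}}^{-1}(\bm{V})[\bm{D}] = -2\bm{S}(\bm{I}+\bm{V})^{-1}\bm{D}(\bm{I}+\bm{V})^{-1}\bm{I}_{N\times p}$ from Fact~\ref{fact:matrix}~\ref{enum:differential_Cayley}, together with the invertibility of $(\bm{I}+\bm{V})^{-1}$ on $Q_{N,p}$ (Fact~\ref{fact:matrix}~\ref{enum:IV_inv_norm}) and of $\bm{S}\in \mathrm{O}(N)$, I would verify injectivity of $\mathrm{D}\Phi_{\bm{S}}^{-1}(\bm{V}^{\star})$ on $Q_{N,p}$. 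Since every image of this derivative necessarily lies in $T_{\St(p,N)}(\bm{U}^{\star})$ (the derivative of a map into a submanifold maps into its tangent space), and $\dim Q_{N,p} = Np - p(p+1)/2 = \dim T_{\St(p,N)}(\bm{U}^{\star})$, injectivity plus the dimension count yields the desired equality of subspaces, and Corollary~\ref{corollary:optimality_C_F} then gives the claimed equivalence.

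For part (b), the plan is to verify assumptions (A1)--(A5) of Proposition~\ref{proposition:extension:Lipschitz_f}~\ref{enum:Lipschitz} with $C = \St(p,N)$ and $F = \Phi_{\bm{S}}^{-1}$, and then invoke its conclusion. Conditions (A1) and (A2) follow from Remark~\ref{remark:assumption}~\ref{enum:S}: the twice continuous differentiability of $\mathfrak{S}$ on the compact set $\St(p,N)$ makes $(\mathrm{D}\mathfrak{S}(\cdot))^{*}$ continuous and $C^{1}$, hence bounded with a Lipschitz continuous differential, on $\mathrm{Conv}(\St(p,N))$ (bounded since $\St(p,N)$ is compact). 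Condition (A3) follows from the Lipschitz continuity of $\nabla h$ over $C$ posited in Problem~\ref{problem:constrained}~\ref{enum:L_smooth} together with the compactness of $\St(p,N)$.

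The genuine content lies in (A4) and (A5), which concern $\Phi_{\bm{S}}^{-1}$ globally on the non-compact set $Q_{N,p}$. For (A4), combining the derivative formula in Fact~\ref{fact:matrix}~\ref{enum:differential_Cayley}, the submultiplicativity in Fact~\ref{fact:matrix}~\ref{enum:norm_upper}, and the bound $\|(\bm{I}+\bm{V})^{-1}\|_{2} \leq 1$ of Fact~\ref{fact:matrix}~\ref{enum:IV_inv_norm}, together with $\|\bm{S}\|_{2} = \|\bm{I}_{N\times p}\|_{2} = 1$, yields $\|\mathrm{D}\Phi_{\bm{S}}^{-1}(\bm{V})[\bm{D}]\|_{F} \leq 2\|\bm{D}\|_{F}$ uniformly in $\bm{V}\in Q_{N,p}$, so $\kappa_{F} = 2$ works. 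Condition (A5) is the main obstacle: I plan to appeal to Lemma~\ref{lemma:Cayley_Lipschitz}, which is referenced in Remark~\ref{remark:assumption}~\ref{enum:S} precisely for this purpose and whose proof uses Fact~\ref{fact:matrix}~\ref{enum:norm_Lipschitz} in combination with \ref{enum:norm_upper}, \ref{enum:IV_inv_norm}, \ref{enum:differential_Cayley}. Once (A1)--(A5) are checked, Proposition~\ref{proposition:extension:Lipschitz_f}~\ref{enum:Lipschitz} produces the Lipschitz constant $L_{\nabla(f_n\circ F)} = \varpi_1 + \varpi_2\mu_n^{-1}$ of the required form in Assumption~\ref{assumption:Lipschitz}~\ref{enum:gradient_Lipschitz}, completing the proof.
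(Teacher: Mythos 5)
Your proposal does not prove the statement in question: what you have written is a proof sketch of Corollary~\ref{corollary:Cayley}, whereas the statement assigned is Fact~\ref{fact:matrix} itself. All four items of Fact~\ref{fact:matrix} appear in your argument only as \emph{given tools} --- you invoke item~\ref{enum:differential_Cayley} in your submersion/dimension-count argument for part (a), and items~\ref{enum:norm_upper}, \ref{enum:IV_inv_norm}, \ref{enum:norm_Lipschitz} (via Lemma~\ref{lemma:Cayley_Lipschitz}) for conditions (A4)--(A5) --- but none of them is established anywhere in your write-up, so relative to the assigned statement the proposal is circular. (Incidentally, your Corollary~\ref{corollary:Cayley} argument largely mirrors the paper's Appendix~\ref{appendix:Cayley}, except that part (a) there is dispatched by quoting that $\Phi_{\bm{S}}^{-1}$ is a diffeomorphism onto $\St(p,N)\setminus E_{N,p}(\bm{S})$~\cite[Prop.~2.2]{Kume-Yamada22} and invoking Corollary~\ref{corollary:optimality_C_F} with Remark~\ref{remark:not_surjective}, rather than via your injectivity-plus-dimension count; but that is a different statement from the one you were asked to prove.)

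What is actually required are proofs of the four elementary items, which the paper itself handles purely by citation to~\cite[Lemma A.4 and the proof of Prop.~2.9]{Kume-Yamada22}; a self-contained argument runs as follows. For~\ref{enum:norm_upper}, writing $\bm{b}_{j}$ for the columns of $\bm{B}$, one has $\norm{\bm{A}\bm{B}}_{F}^{2} = \sum_{j}\norm{\bm{A}\bm{b}_{j}}_{2}^{2} \leq \norm{\bm{A}}_{2}^{2}\sum_{j}\norm{\bm{b}_{j}}_{2}^{2} = \norm{\bm{A}}_{2}^{2}\norm{\bm{B}}_{F}^{2}$, and the second inequality follows by applying the first to $\bm{B}^{\TT}\bm{A}^{\TT}$. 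For~\ref{enum:IV_inv_norm}, skew-symmetry gives $\inprod{\bm{x}}{\bm{V}\bm{x}} = 0$, hence $\norm{(\bm{I}+\bm{V})\bm{x}}_{2}^{2} = \norm{\bm{x}}_{2}^{2} + \norm{\bm{V}\bm{x}}_{2}^{2} \geq \norm{\bm{x}}_{2}^{2}$, so $\bm{I}+\bm{V}$ is invertible with smallest singular value at least $1$, i.e., $\norm{(\bm{I}+\bm{V})^{-1}}_{2}\leq 1$. Item~\ref{enum:norm_Lipschitz} follows from the resolvent identity $(\bm{I}+\bm{V}_{1})^{-1} - (\bm{I}+\bm{V}_{2})^{-1} = (\bm{I}+\bm{V}_{1})^{-1}(\bm{V}_{2}-\bm{V}_{1})(\bm{I}+\bm{V}_{2})^{-1}$ combined with~\ref{enum:norm_upper} and~\ref{enum:IV_inv_norm}. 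For~\ref{enum:differential_Cayley}, differentiate $t\mapsto \bm{S}(\bm{I}-\bm{V}-t\bm{D})(\bm{I}+\bm{V}+t\bm{D})^{-1}\bm{I}_{N\times p}$ at $t=0$ using $\frac{d}{dt}(\bm{I}+\bm{V}+t\bm{D})^{-1}\big|_{t=0} = -(\bm{I}+\bm{V})^{-1}\bm{D}(\bm{I}+\bm{V})^{-1}$; since $\bm{I}-\bm{V}$ and $(\bm{I}+\bm{V})^{-1}$ commute, the two resulting terms combine as $-\bm{D}(\bm{I}+\bm{V})^{-1} - (\bm{I}-\bm{V})(\bm{I}+\bm{V})^{-1}\bm{D}(\bm{I}+\bm{V})^{-1} = -(\bm{I}+\bm{V})^{-1}\left[(\bm{I}+\bm{V})+(\bm{I}-\bm{V})\right]\bm{D}(\bm{I}+\bm{V})^{-1} = -2(\bm{I}+\bm{V})^{-1}\bm{D}(\bm{I}+\bm{V})^{-1}$, which yields the claimed formula after pre- and post-multiplication by $\bm{S}$ and $\bm{I}_{N\times p}$. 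Either reproduce these computations or, as the paper does, cite~\cite{Kume-Yamada22}; using Corollary~\ref{corollary:Cayley} or Lemma~\ref{lemma:Cayley_Lipschitz} is not an option, since both sit logically downstream of Fact~\ref{fact:matrix}.
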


\begin{lemma}[Boundedness and Lipschitz continuity of $\mathrm{D}\param_{\bm{S}}^{\rm Cay}(\bm{V})$]
  \label{lemma:Cayley_Lipschitz}
  For
  $\param_{\bm{S}}^{\rm Cay}$
  in~\eqref{eq:Cayley} with
  $\bm{S} \in {\rm O}(N)$,
  we have
  \begin{align}
    (\bm{V} \in Q_{N,p}) \quad &
    \norm{\mathrm{D}\param_{\bm{S}}^{\rm Cay}(\bm{V})}_{\rm op} \leq 2; \label{eq:Cayley_1}                                                        \\
    (\bm{V}_{1},\bm{V}_{2} \in Q_{N,p}) \quad
                               & \norm{\mathrm{D}\param_{\bm{S}}^{\rm Cay}(\bm{V}_{1}) - \mathrm{D}\param_{\bm{S}}^{\rm Cay}(\bm{V}_{2})}_{\rm op}
    \leq 4\norm{\bm{V}_{1} - \bm{V}_{2}}_{F} \label{eq:Cayley_2}.
  \end{align}
\end{lemma}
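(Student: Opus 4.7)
The plan is to start from the closed-form expression
$\mathrm{D}\Phi_{\bm{S}}^{-1}(\bm{V})[\bm{D}] = -2\bm{S}(\bm{I}+\bm{V})^{-1}\bm{D}(\bm{I}+\bm{V})^{-1}\bm{I}_{N\times p}$
given by Fact~\ref{fact:matrix}\ref{enum:differential_Cayley}, and to control the Frobenius norm of the right-hand side by repeated use of the submultiplicativity
$\|\bm{A}\bm{B}\|_F\leq \|\bm{A}\|_2\|\bm{B}\|_F$ and
$\|\bm{A}\bm{B}\|_F\leq \|\bm{A}\|_F\|\bm{B}\|_2$ (Fact~\ref{fact:matrix}\ref{enum:norm_upper}), combined with the three uniform bounds $\|\bm{S}\|_2=1$, $\|\bm{I}_{N\times p}\|_2=1$, and $\|(\bm{I}+\bm{V})^{-1}\|_2\leq 1$ (Fact~\ref{fact:matrix}\ref{enum:IV_inv_norm}).

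For~\eqref{eq:Cayley_1}, I would apply these estimates directly to the closed-form expression above, which yields
\begin{equation*}
\|\mathrm{D}\Phi_{\bm{S}}^{-1}(\bm{V})[\bm{D}]\|_F
\leq 2\,\|\bm{S}\|_2\,\|(\bm{I}+\bm{V})^{-1}\|_2\,\|\bm{D}\|_F\,\|(\bm{I}+\bm{V})^{-1}\|_2\,\|\bm{I}_{N\times p}\|_2 \leq 2\|\bm{D}\|_F,
\end{equation*}
so that $\|\mathrm{D}\Phi_{\bm{S}}^{-1}(\bm{V})\|_{\rm op}\leq 2$.

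For~\eqref{eq:Cayley_2}, the key maneuver is a telescoping add–subtract
\begin{align*}
&(\bm{I}+\bm{V}_1)^{-1}\bm{D}(\bm{I}+\bm{V}_1)^{-1} - (\bm{I}+\bm{V}_2)^{-1}\bm{D}(\bm{I}+\bm{V}_2)^{-1} \\
&\quad = \bigl[(\bm{I}+\bm{V}_1)^{-1}-(\bm{I}+\bm{V}_2)^{-1}\bigr]\bm{D}(\bm{I}+\bm{V}_1)^{-1}
+ (\bm{I}+\bm{V}_2)^{-1}\bm{D}\bigl[(\bm{I}+\bm{V}_1)^{-1}-(\bm{I}+\bm{V}_2)^{-1}\bigr].
\end{align*}
On each of the two summands I would use $\|\bm{A}\bm{B}\|_F\leq \|\bm{A}\|_2\|\bm{B}\|_F$ to isolate $\|\bm{D}\|_F$, then use $\|\cdot\|_2\leq \|\cdot\|_F$ on the difference of inverses together with Fact~\ref{fact:matrix}\ref{enum:norm_Lipschitz}, which gives $\|(\bm{I}+\bm{V}_1)^{-1}-(\bm{I}+\bm{V}_2)^{-1}\|_2\leq \|\bm{V}_1-\bm{V}_2\|_F$, and absorb the remaining $(\bm{I}+\bm{V}_i)^{-1}$ via Fact~\ref{fact:matrix}\ref{enum:IV_inv_norm}. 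Each summand therefore contributes at most $\|\bm{V}_1-\bm{V}_2\|_F\|\bm{D}\|_F$, and multiplying by the leading factor $2$ (from the differential formula) and using $\|\bm{S}\|_2=\|\bm{I}_{N\times p}\|_2=1$ yields
\begin{equation*}
\|\mathrm{D}\Phi_{\bm{S}}^{-1}(\bm{V}_1)[\bm{D}]-\mathrm{D}\Phi_{\bm{S}}^{-1}(\bm{V}_2)[\bm{D}]\|_F \leq 4\,\|\bm{V}_1-\bm{V}_2\|_F\,\|\bm{D}\|_F,
\end{equation*}
which is exactly~\eqref{eq:Cayley_2}.

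There is no real obstacle here; the entire argument is routine once the differential formula of Fact~\ref{fact:matrix}\ref{enum:differential_Cayley} and the matrix norm bounds of Fact~\ref{fact:matrix} are in hand. The only step that requires minor care is the choice of which norm ($\|\cdot\|_2$ vs.\ $\|\cdot\|_F$) to place on each factor in the telescoping decomposition, so that $\|\bm{D}\|_F$ (and not $\|\bm{D}\|_2$) appears on the right, matching the definition of the operator norm of $\mathrm{D}\Phi_{\bm{S}}^{-1}(\bm{V}_1)-\mathrm{D}\Phi_{\bm{S}}^{-1}(\bm{V}_2)$ viewed as a linear map on $(Q_{N,p},\|\cdot\|_F)$.
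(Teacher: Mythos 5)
Your proof is correct and takes essentially the same route as the paper's: both start from the closed-form differential of Fact~\ref{fact:matrix}~\ref{enum:differential_Cayley}, bound the outer factors via $\norm{\bm{S}}_{2}=\norm{\bm{I}_{N\times p}}_{2}=1$ and Fact~\ref{fact:matrix}~\ref{enum:norm_upper}--\ref{enum:IV_inv_norm}, and handle~\eqref{eq:Cayley_2} by a telescoping add--subtract estimated with Fact~\ref{fact:matrix}~\ref{enum:norm_Lipschitz}. The only (immaterial) difference is that you insert $(\bm{I}+\bm{V}_{2})^{-1}\bm{D}(\bm{I}+\bm{V}_{1})^{-1}$ as the intermediate term whereas the paper inserts $(\bm{I}+\bm{V}_{1})^{-1}\bm{D}(\bm{I}+\bm{V}_{2})^{-1}$; this symmetric variant yields the same per-summand bound $\norm{\bm{V}_{1}-\bm{V}_{2}}_{F}\norm{\bm{D}}_{F}$ and hence the same constant $4$.
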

\begin{proof}
  (Proof of~\eqref{eq:Cayley_1})
  Let
  $\bm{V}, \bm{D} \in Q_{N,p}$
  such that
  $\norm{\bm{D}}_{F} \leq 1$.
  By Fact~\ref{fact:matrix}~\ref{enum:differential_Cayley}, we have
  \begin{align}
     & \norm{\mathrm{D}\param_{\bm{S}}^{\rm Cay}(\bm{V})[\bm{D}]}_{F}
    = 2\norm{\bm{S}(\bm{I}+\bm{V})^{-1}\bm{D}(\bm{I}+\bm{V})^{-1}\bm{I}_{N\times p}}_{F}                                                                                     \\
     & \overset{\rm Fact~\ref{fact:matrix}~\ref{enum:norm_upper}}{\leq} 2 \norm{\bm{S}}_{2}\norm{(\bm{I}+\bm{V})^{-1}}_{2}^{2}\norm{\bm{D}}_{F}\norm{\bm{I}_{N\times p}}_{2}
    \overset{\rm Fact~\ref{fact:matrix}~\ref{enum:IV_inv_norm}}{\leq} 2\norm{\bm{D}}_{F} \leq 2,
  \end{align}
  implying thus~\eqref{eq:Cayley_1}, where we used
  $\norm{\bm{S}}_{2} = \norm{\bm{I}_{N\times p}}_{2} = 1$.

  (Proof of~\eqref{eq:Cayley_2})
  Let
  $\bm{V}_{1}, \bm{V}_{2}, \bm{D} \in Q_{N,p}$
  such that
  $\norm{\bm{D}}_{F} \leq 1$.
  By Fact~\ref{fact:matrix}~\ref{enum:differential_Cayley}, we have
  \begin{align}
     & \norm{\mathrm{D}\param_{\bm{S}}^{\rm Cay}(\bm{V}_{1})[\bm{D}] - \mathrm{D}\param_{\bm{S}}^{\rm Cay}(\bm{V}_{2})[\bm{D}]}_{F}                                                                                                               \\
     & = 2\norm{\bm{S}\left((\bm{I}+\bm{V}_{1})^{-1}\bm{D}(\bm{I}+\bm{V}_{1})^{-1}- (\bm{I}+\bm{V}_{2})^{-1}\bm{D}(\bm{I}+\bm{V}_{2})^{-1}\right)\bm{I}_{N\times p}}_{F}                                                                          \\
     & \overset{\rm Fact~\ref{fact:matrix}~\ref{enum:norm_upper}}{\leq}  2\norm{\bm{S}}_{2}\norm{(\bm{I}+\bm{V}_{1})^{-1}\bm{D}(\bm{I}+\bm{V}_{1})^{-1}- (\bm{I}+\bm{V}_{2})^{-1}\bm{D}(\bm{I}+\bm{V}_{2})^{-1}}_{F}\norm{\bm{I}_{N\times p}}_{2} \\
     & \leq 2 \norm{(\bm{I}+\bm{V}_{1})^{-1}\bm{D}\left((\bm{I}+\bm{V}_{1})^{-1}-(\bm{I}+\bm{V}_{2})^{-1}\right)}_{F}                                                                                                                             \\
     & \quad + 2\norm{\left((\bm{I}+\bm{V}_{1})^{-1}-(\bm{I}+\bm{V}_{2})^{-1}\right)\bm{D}(\bm{I}+\bm{V}_{2})^{-1}}_{F}. \label{eq:Cayley_Lipschitz}
    \quad (\because \norm{\bm{S}}_{2}=\norm{\bm{I}_{N\times p}}_{2} = 1)
  \end{align}
  The first term in~\eqref{eq:Cayley_Lipschitz} can be evaluated further as
  \begin{align}
     & \norm{(\bm{I}+\bm{V}_{1})^{-1}\bm{D}\left((\bm{I}+\bm{V}_{1})^{-1}-(\bm{I}+\bm{V}_{2})^{-1}\right)}_{F}                                                                           \\
     & \overset{\rm Fact~\ref{fact:matrix}~\ref{enum:norm_upper}}{\leq} \norm{(\bm{I}+\bm{V}_{1})^{-1}}_{2}\norm{\bm{D}}_{F}\norm{(\bm{I}+\bm{V}_{1})^{-1}-(\bm{I}+\bm{V}_{2})^{-1}}_{F} \\
     & \overset{\rm Fact~\ref{fact:matrix}~\ref{enum:IV_inv_norm}}{\leq} \norm{\bm{D}}_{F}\norm{(\bm{I}+\bm{V}_{1})^{-1}-(\bm{I}+\bm{V}_{2})^{-1}}_{F}                                   \\
     & \overset{\rm Fact~\ref{fact:matrix}~\ref{enum:norm_Lipschitz}}{\leq} \norm{\bm{D}}_{F}\norm{\bm{V}_{1}-\bm{V}_{2}}_{F}
    \leq \norm{\bm{V}_{1}-\bm{V}_{2}}_{F}. \label{eq:triangle_1}
  \end{align}
  Since the second term in~\eqref{eq:Cayley_Lipschitz} can be evaluated similarly to~\eqref{eq:triangle_1}, we have
  $\norm{\mathrm{D}\param_{\bm{S}}^{\rm Cay}(\bm{V}_{1})[\bm{D}] - \mathrm{D}\param_{\bm{S}}^{\rm Cay}(\bm{V}_{2})[\bm{D}]}_{F} \leq 4\norm{\bm{V}_{1}-\bm{V}_{2}}_{F}$,
  implying thus~\eqref{eq:Cayley_2}.
\end{proof}

\begin{proof}[Proof of Corollary~\ref{corollary:Cayley}]
  (a)
  By noting that
  $\param_{\bm{S}}^{\rm Cay}:Q_{N,p}\to \St(p,N)\setminus E_{N,p}(\bm{S})$
  is a diffeomorphism~\cite[Prop. 2.2]{Kume-Yamada22},
  Corollary~\ref{corollary:optimality_C_F} with
  $C\coloneqq \St(p,N)$
  and
  $\param \coloneqq \param_{\bm{S}}^{\rm Cay}$ completes the proof\footnote{
    Although
    $\param_{\bm{S}}^{\rm Cay}$
    is not surjective onto
    $\St(p,N)$,
    the same statement in Corollary~\ref{corollary:optimality_C_F} follows (see Remark~\ref{remark:not_surjective}).
  } (see also the footnote\footref{foot:submersion} in Corollary~\ref{corollary:optimality_C_F}).

  (b)
  To invoke Proposition~\ref{proposition:extension:Lipschitz_f}, we check below the conditions~\ref{enum:bounded_DG}-\ref{enum:Lipschitz_DF} in Proposition~\ref{proposition:extension:Lipschitz_f}
  with
  $C\coloneqq \St(p,N)$
  and
  $\param \coloneqq \param_{\bm{S}}^{\rm Cay}$.
  Since
  $\St(p,N)$
  is compact and
  $\mathfrak{S}$
  is twice continuously differentiable,~\ref{enum:bounded_DG}-\ref{enum:bounded_gradh} are automatically satisfied (see Remark~\ref{remark:assumption}~\ref{enum:S}).
  For~\ref{enum:bounded_DF}-\ref{enum:Lipschitz_DF}, see Lemma~\ref{lemma:Cayley_Lipschitz}.
  Therefore, by Proposition~\ref{proposition:extension:Lipschitz_f}~\ref{enum:Lipschitz}, Assumption~\ref{assumption:Lipschitz}~\ref{enum:gradient_Lipschitz} is satisfied with
  $C\coloneqq \St(p,N)$
  and
  $\param \coloneqq \param_{\bm{S}}^{\rm Cay}$.
\end{proof}

\end{document}